\documentclass{amsart}
\usepackage{amscd,amssymb,amsmath,amsthm,amsfonts,stmaryrd}
\usepackage[all]{xy}
\usepackage{nccmath}
\usepackage{graphicx}

\numberwithin{equation}{section}

\newtheorem{prop}{Proposition}
\newtheorem{thm}{Theorem}
\newtheorem*{theo}{Theorem}
\newtheorem{lem}{Lemma}
\newtheorem{cor}{Corollary}
\newcommand\Th{\mathcal{TH}}
\newcommand\Thc{\mathcal{TH}^{1}}
\newcommand\Tho{\widetilde{\mathcal{TH}}}

\newcommand\hoc{\widetilde{\mathcal{H}}^{1}}
\newcommand\ho{\widetilde{\mathcal{H}}}
\newcommand\h{\mathcal{H}}
\newcommand\hc{\mathcal{H}^{1}}

\newcommand\R{\mathbb{R}}
\newcommand\C{\mathbb{C}}
\newcommand\N{\mathbb{N}}
\newcommand\Z{\mathbb{Z}}

\newcommand\Hom{\text{Hom}}

\newtheorem{rem}{Remark}

\newcommand\n{\textsc{n}}

\author{Simon BARAZER }

\title{Average diffusion rate of Ehrenfest Wind-tree billiards}

\begin{document}

\maketitle

\begin{abstract}
One of the versions of the wind-tree model of Boltzmann gas, suggested by Paul and
Tatiana Ehrenfest more than a century ago, can be seen as a billiard
in the plane endowed with $\mathbb{Z}\oplus\mathbb{Z}$-periodic rectangular
obstacles. In the breakthrough paper by
V.~Delecroix, P.~Hubert and S.~Lelievre the authors proved,
that the diffusion
rate of trajectories in such a billiard is equal to $\frac{2}{3}$,
that is the \textit{maximal} distance from the origin
achieved by a point of
a typical trajectory on a segment of time $[0,t]$
grows roughly as $t^\frac{2}{3}$ for large $t$.
Here $\frac{2}{3}$ is the Lyapunov exponent of the
associated renormalizing dynamical system.

This pioneering result does
not tell, however, whether trajectories spend most of the time close
or far from the initial point. In the current paper, we prove
that the
\textit{average} distance from the origin grows
with the same rate $t^\frac{2}{3}$.
In plain terms, it means that
trajectories mostly stay
as far as possible from the initial point
(though, it is known that the wind-tree billiard is recurrent, so
trajectories occasionally pass close to the initial point).

More generally, fundamental rigidity results by A.~Eskin and
M.~Mirzakhani completed by certain genericity results by J.~Chaika
and A.~Eskin imply that the diffusion rate of almost all flat
geodesic rays on any $\mathbb{Z}^d$-cover of a closed translation
surface $S$ is given by certain Lyapunov exponent
of the
Kontsevich--Zorich cocycle on the
$\text{SL}_2(\mathbb{R})$-orbit closure of $S$. In this paper
we prove that in this most general setting, the
\textit{average} and \textit{maximum}
diffusion rates coincide.
\end{abstract}

 \begin{figure}[h!]
 \centering
 \includegraphics[width =6cm]{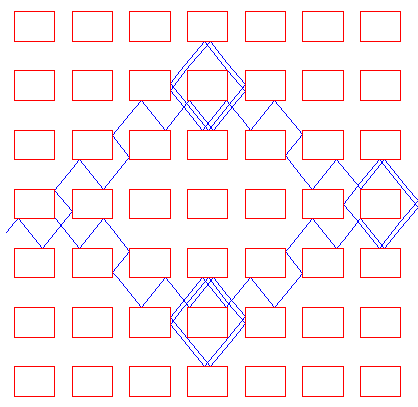}
 \caption{Trajectory in a Wind-Tree}
 \label{windtree1}
 \end{figure}

\newpage

\section{Introduction}
The wind-tree model is an infinite $\Z^2$-periodic billiard in the
plane $\R^2$, where identical rectangular obstacles are placed in
such way that the corresponding sides are aligned with the horizontal
and vertical axes and the centers of rectangles are located at the
lattice points, see figure~\ref{windtree1}). More formally, for every
couples of real parameters  $(a,b)\in (0,1)^2$ the wind-tree table
$T(a,b)$ is defined as
\begin{equation*}
T(a,b)=\R^2\setminus
    \bigsqcup_{(m,n)\in \Z^2} \left (m-\frac{a}{2},m+\frac{a}{2}\right ) \times \left (n-\frac{b}{2},n+\frac{b}{2}\right ).
\end{equation*}
It's assumed that reflections on the sides of
the rectangles are regular and the speed of all trajectories is
constant equal to one (figure \ref{windtree1}). A version of this
model was introduced by Tatiana and Paul~Erhenfest more than a
century ago, see~\cite{ehrenfest1990conceptual}. It have been studied
more recently by using the tools of translation surfaces theory.
Athur~Avila  and Pascal~Hubert proved in~\cite{avila2020recurrence}
that the trajectories in the
wind-tree are recurrent. Later Vincent~Delecroix, Pascal~Hubert and
Samuel~Leli\`evre proved in~\cite{delecroix2014diffusion}
that the diffusion rate of a typical
trajectory is equal to $\frac{2}{3}$. If $\gamma_t(x,\theta)$ is the
wind-tree flow in the direction $\theta$ starting from $x\in T(a,b)$,
they proved the following theorem.

\begin{theo}[Delcroix, Hubert, Lelievre, 2014]
For any $(a,b)\in (0,1)^2$,
for almost every $\theta$ (depending on $(a,b)$) and for all $x\in
T(a,b)$ having an infinite future orbit, the following relation holds:
\begin{equation*}
\limsup_{t\rightarrow +\infty}
    \frac{\log d(x,\gamma_t(x,\theta))}{\log t} = \frac{2}{3}\,.
\end{equation*}
\end{theo}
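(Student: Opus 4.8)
The plan is to translate the billiard dynamics into the dynamics of the Kontsevich--Zorich cocycle over Teichm\"uller space, and then to identify the diffusion exponent with one of its Lyapunov exponents.
\emph{Unfolding.} The group generated by the reflections in the sides of the obstacles is $G\cong\Z/2\oplus\Z/2$, and unfolding the table $T(a,b)$ by $G$ turns the billiard flow in a fixed direction $\theta$ into the straight-line flow on a translation surface. Carried out on the whole $\Z^2$-periodic table this yields an infinite translation surface $\widehat X(a,b)$, a $\Z^2$-cover of a \emph{compact} translation surface $X(a,b)$ of genus $5$ in the stratum $\h(2,2,2,2)$, on which $G$ acts by automorphisms, the four cone points of angle $6\pi$ being the unfoldings of the four corners of the obstacle. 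The two generators of the deck group $\Z^2$ are Poincar\'e dual to closed $1$-forms $h_1,h_2$ on $X(a,b)$ lying in a single non-trivial $G$-isotypic component $V\subset H^1(X(a,b);\R)$, disjoint from the tautological plane. On $\widehat X(a,b)$ the position of a trajectory in the lattice $\Z^2$ after time $t$ equals the pair of intersection numbers of the projected trajectory $\gamma^\theta_{[0,t]}$ with the two cutting cycles, i.e.\ $\big(\langle h_1,\gamma^\theta_{[0,t]}\rangle,\langle h_2,\gamma^\theta_{[0,t]}\rangle\big)$, the integrals of $h_1,h_2$ along the trajectory pushed down to $X(a,b)$. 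Hence
\begin{equation*}
d\big(x,\gamma_t(x,\theta)\big)=\big\|\big(\langle h_1,\gamma^\theta_{[0,t]}\rangle,\ \langle h_2,\gamma^\theta_{[0,t]}\rangle\big)\big\|+O(1),
\end{equation*}
and this error term, like the difference between trajectories issued from two points with infinite future in direction $\theta$, is uniformly bounded, so the $\limsup$ to be computed does not depend on $x$.

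\emph{Deviations of ergodic averages.} For Lebesgue-almost every $\theta$ --- the full-measure set depending on $X(a,b)$ --- the linear flow on $X(a,b)$ in direction $\theta$ is uniquely ergodic (Masur's criterion) and the associated Teichm\"uller geodesic equidistributes towards, and is Oseledets-generic for, the affine $\mathrm{SL}_2(\R)$-invariant measure on the orbit closure $\mathcal M$ of $X(a,b)$ (Chaika--Eskin). As $h_1,h_2$ have zero flow-average (being disjoint from the tautological plane), the Zorich--Forni theory of deviation of ergodic averages applies and gives, for such $\theta$,
\begin{equation*}
\limsup_{t\to+\infty}\frac{\log\big|\langle\eta,\gamma^\theta_{[0,t]}\rangle\big|}{\log t}=\frac{\lambda(\eta)}{\lambda_1},\qquad \eta\in\{h_1,h_2\},
\end{equation*}
where $\lambda_1=1$ is the top exponent of the cocycle over $\mathcal M$ and $\lambda(\eta)\in(0,1)$ is the top Lyapunov exponent of the smallest continuous cocycle-invariant sub-bundle of the Hodge bundle over $\mathcal M$ whose fibre over $X(a,b)$ contains the class of $\eta$. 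The theorem is thereby reduced to the single equality $\lambda(h_1)=\lambda(h_2)=\tfrac23$.

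\emph{Computing the exponent.} Since $G$ acts by automorphisms and thus commutes with the cocycle, the isotypic component $V$ propagates to a cocycle-invariant, symplectic sub-bundle $\mathcal V$ of the Hodge bundle over $\mathcal M$, which contains the sub-bundle generated by $h_1,h_2$; its exponents are symmetric about $0$, and for almost every direction the classes $h_1,h_2$ realize its top exponent $\lambda$ (one checks they are not trapped in the contracting subspace). To evaluate $\lambda$ one invokes the Eskin--Kontsevich--Zorich formula, which expresses the sum of the non-negative exponents of such a sub-bundle in terms of the degree of the corresponding piece of the Hodge bundle --- computable from the branching data of the abelian cover $X(a,b)\to X(a,b)/G$ --- plus an explicit Siegel--Veech term; for the component $V$ this evaluates to $\tfrac23$, and, crucially, the value is \emph{independent of $(a,b)$ and of the orbit closure $\mathcal M$}. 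Equivalently, one may compute $\lambda$ on a single square-tiled surface in $\mathcal M$, where $X(a,b)$ is realized as an explicit cyclic cover of a torus, and then invoke the constancy of $\lambda$ along the family. This rigidity of the exponent is what produces the universal constant $\tfrac23$.

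\emph{Main obstacle.} The step I expect to be hardest is the last one: one must control the $\mathrm{SL}_2(\R)$-orbit closure $\mathcal M$ of $X(a,b)$ well enough to pin down the sub-bundle $\mathcal V$, and then carry out the Lyapunov-exponent computation and verify that it collapses to $\tfrac23$ for every $(a,b)$. The remaining technical points --- showing that the excursion and boundary contributions to the displacement are genuinely $O(1)$ uniformly in $t$ and in the base point, and that almost every direction is simultaneously generic for unique ergodicity of the linear flow and for the Oseledets theorem of the Kontsevich--Zorich cocycle --- are comparatively routine.
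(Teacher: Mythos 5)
The statement you were asked to prove is not proved in the paper under review; it is cited as the foundational result of Delecroix--Hubert--Leli\`evre~\cite{delecroix2014diffusion}. There is therefore no ``paper's own proof'' to compare against. The paper's own theorems (\ref{SB1}, \ref{SB}) concern the \emph{average} diffusion and are proved by a different toolkit --- Veech zippered rectangles, an explicit cross-section for the Teichm\"uller flow, large-excursion construction (Proposition~\ref{LDC}) and a uniform upper bound (Proposition~\ref{upbound1}) --- so your sketch should be judged against the DHL paper, not against the present one.

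With that understood: your outline is essentially the DHL argument, and its skeleton is sound --- unfold by $(\Z/2)^2$, identify the $\Z^2$-cover with its monodromy classes $h_1, h_2\in H^1(X(a,b);\R)$, reduce $d(x,\gamma_t(x,\theta))$ to the intersection numbers $\langle h_i,\gamma^\theta_{[0,t]}\rangle$ up to $O(1)$, apply Zorich--Forni deviation theory, and identify the exponent via Eskin--Kontsevich--Zorich. But there are real gaps. First, $h_1$ and $h_2$ do \emph{not} lie in a single isotypic component: each is fixed by one $\Z/2$ factor and sign-reversed by the other, so they occupy two different nontrivial isotypic pieces (which share a Lyapunov spectrum by a symmetry of $X(a,b)$ exchanging them). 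Second --- and this is the step you rightly flag as hardest --- your appeal to EKZ does not by itself give $\lambda(h_i)=\tfrac23$: the formula computes the \emph{sum} of nonnegative exponents of a $G$-invariant symplectic sub-bundle, and each relevant isotypic piece here is $4$-dimensional, hence carries two positive exponents whose sum EKZ gives; one still has to separate out the top one. DHL do this by a further descent: the piece fixed by one $\Z/2$ is the nontautological part of $H^1$ of an intermediate genus-$3$ quotient of $X(a,b)$, and the computation is carried out there, ultimately reducing to explicit Siegel--Veech constants of hyperelliptic loci of quadratic differentials. Merely invoking ``the EKZ formula applied to $V$'' elides the part of the argument where the number $\tfrac23$ actually appears. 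Third, your assertion that the displacement between two parallel trajectories with infinite future orbit is uniformly bounded in $t$ is false on an infinite $\Z^2$-cover; the independence of the limsup from $x$ is genuine but requires an argument of the flavor of Proposition~\ref{upbound1} in the present paper (a uniform bound over the return interval via zippered rectangles), not the naive triangle-inequality reasoning you state.
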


Anton~Zorich and Vincent~Delecroix generalized this result to a large
class of wind-tree billiards with more complicated obstacles
\cite{delecroix2020cries}. Charles~Fougeron studied
in~\cite{fougeron2018diffusion} orbit closures of flat surfaces
associated to even more general wind-tree billiards. In particular,
he identified the Lyapunov exponent of the Kontsevich--Zorich cocycle
that governs the associated diffusion rate. In a different direction,
Krzysztof Fraczek and Corinna Ulcigrai showed
in~\cite{frkaczek2014non} that the directional flow for almost all
wind-tree billiard is non-ergodic.

Note that the diffusion rate $\frac{2}{3}$ is defined in the original
paper~\cite{delecroix2014diffusion} of
Delecroix--Hubert--Lelièvre as the \textit{maximal} distance from
the origin achieved by a point of a typical trajectory on a large
segment of time. In the present paper, we study the diffusion
\textit{in average}. In particular, we prove the following theorem:

\begin{thm}
\label{SB1}
For any $(a,b)\in (0,1)^2$,
for almost every $\theta$ (depending on $(a,b)$) and for all $x\in
T(a,b)$ having an infinite future orbit, the following relation holds:
\begin{equation*}
\lim_{T\rightarrow +\infty}
\frac{\log \frac{1}{T}\int_0^T d(x,\gamma_t(x,\theta))dt}{\log T}
= \frac{2}{3}\,.
\end{equation*}
\end{thm}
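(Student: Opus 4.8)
The plan is to prove the two inequalities $\limsup_{T\to\infty}\frac{\log\frac1T\int_0^T d(x,\gamma_t(x,\theta))\,dt}{\log T}\le\frac23$ and $\liminf_{T\to\infty}\frac{\log\frac1T\int_0^T d(x,\gamma_t(x,\theta))\,dt}{\log T}\ge\frac23$ separately, writing $\lambda$ for the relevant top Kontsevich--Zorich exponent ($\lambda=\frac23$ here) so that the argument reads verbatim in the general $\Z^d$-cover setting of the abstract. The upper bound is soft: the Delecroix--Hubert--Leli\`evre theorem gives $d(x,\gamma_t(x,\theta))\le t^{\lambda+\epsilon}$ for all $t\ge t_0(\epsilon,\theta)$, whence $\frac1T\int_0^T d(x,\gamma_t(x,\theta))\,dt\le\frac1T\big(O(1)+\int_{t_0}^T t^{\lambda+\epsilon}\,dt\big)=O(T^{\lambda+\epsilon})$, and letting $\epsilon\to0$ gives the $\limsup$ bound. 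The lower bound is the real content, and it genuinely requires more than the $\limsup$ statement: from a single near-maximal time $d(x,\gamma_{t_0}(x,\theta))\approx t_0^{\lambda-\epsilon}$ together with the $1$-Lipschitz estimate $|d(x,\gamma_s(x,\theta))-d(x,\gamma_t(x,\theta))|\le|s-t|$ (the flow has unit speed) one only controls $d\ge\tfrac12 t_0^{\lambda-\epsilon}$ on an interval of length $\sim t_0^{\lambda-\epsilon}$, a contribution of order $t_0^{2\lambda-1}$ to the average --- strictly less than $t_0^{\lambda}$ because $\lambda<1$. One must therefore show that the displacement is near-maximal not at isolated spikes but on a \emph{bulk} of times; concretely I would reduce the lower bound to the estimate: for a.e.\ $\theta$ and all $\epsilon,\epsilon_0>0$ there is $\rho>0$ so that for all large $s$,
\begin{equation*}
m\Big\{\,r\in[(1-\epsilon_0)s,\,s]\ :\ d\big(x,\gamma_{e^r}(x,\theta)\big)\ge e^{(\lambda-\epsilon)r}\,\Big\}\ \ge\ \rho\,\epsilon_0\,s.
\end{equation*}
Granting this, change variables to $t=e^r$ and bound $e^r\ge e^{(1-\epsilon_0)s}$ on the window and $d(x,\gamma_{e^r}(x,\theta))\ge e^{(\lambda-\epsilon)r}$ on the set above, to get $\int_0^{T} d(x,\gamma_t(x,\theta))\,dt\ge\rho\,\epsilon_0\,s\,e^{(1+\lambda-\epsilon)(1-\epsilon_0)s}$ with $T=e^s$, hence $\frac1T\int_0^T d(x,\gamma_t(x,\theta))\,dt\ge T^{\lambda-\epsilon-(1+\lambda)\epsilon_0-o(1)}$; letting $s\to\infty$ and then $\epsilon,\epsilon_0\to0$ along countable sequences (so the exceptional $\theta$-set stays null) yields $\liminf\ge\lambda$, which together with the upper bound is the theorem.

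The bulk estimate is where the structure theory enters. Lifting the billiard to its translation surface $S$ and the $\Z^2$-cover (resp.\ working directly on the given $\Z^d$-cover), the displacement over a time segment of length $t$ is comparable, up to a bounded additive error, to the pairing of the cohomology class $\phi\in H^1(S;\R^d)$ defining the cover with the cycle obtained by closing up the projected segment; running the Teichm\"uller geodesic flow on the orbit closure $\mathcal M=\overline{\mathrm{SL}_2(\R)\cdot S}$ renormalizes this to
\begin{equation*}
d\big(x,\gamma_{e^r}(x,\theta)\big)\ \asymp\ \|\Phi_r\|\cdot\big|\langle\widehat\Phi_r,\,v_r\rangle\big|,
\end{equation*}
where $\Phi_r$ is the Kontsevich--Zorich transport of $\phi$ over time $r$, $\widehat\Phi_r=\Phi_r/\|\Phi_r\|$, and $v_r$ is the bounded homology class of the renormalized unit-length trajectory based at $Y_r=g_{-r}S_\theta$. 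By the Eskin--Mirzakhani classification, completed by the Chaika--Eskin genericity theorem, for a.e.\ $\theta$ the geodesic $(Y_r)_{r\ge0}$ equidistributes towards the affine measure $\nu_{\mathcal M}$; since a proportional window is the difference of two initial segments, $(Y_r)_{r\in[(1-\epsilon_0)s,s]}$ is also approximately $\nu_{\mathcal M}$-equidistributed as $s\to\infty$. Oseledets' theorem along this geodesic then gives $\|\Phi_r\|=e^{\lambda r+o(r)}$ as a genuine limit --- because $\phi$ is generic for the top exponent of the relevant invariant subbundle, which is exactly what makes the Delecroix--Hubert--Leli\`evre rate equal $\lambda$ rather than smaller --- and $\widehat\Phi_r\to E^+(Y_r)$, the top Oseledets direction, which depends continuously on $Y_r$ along the orbit.

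To finish, fix $\delta>0$ small enough that the set where $\big|\langle E^+(Y),v\rangle\big|\ge\delta$ (on the bundle over $\mathcal M$ recording the base point, where $v$ lives) has positive $\nu_{\mathcal M}$-measure $\rho_0$; such a $\delta$ exists, since otherwise $|\langle\widehat\Phi_r,v_r\rangle|\to0$ and $d(x,\gamma_{e^r}(x,\theta))$ would grow strictly more slowly than $e^{\lambda r}$, contradicting the Delecroix--Hubert--Leli\`evre theorem. By the approximate equidistribution on the proportional window, for $s$ large the set of $r\in[(1-\epsilon_0)s,s]$ with $Y_r$ in that set has measure $\ge\tfrac12\rho_0\epsilon_0 s$; for such $r$ (with $r$ large) the renormalization comparison, together with $\|\Phi_r\|\ge e^{(\lambda-\epsilon)r}$ and $|\langle\widehat\Phi_r,v_r\rangle|\ge\tfrac12\delta$, gives $d(x,\gamma_{e^r}(x,\theta))\ge e^{(\lambda-2\epsilon)r}$. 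This is the bulk estimate (with $2\epsilon$ and $\tfrac12\rho_0$ in place of $\epsilon$ and $\rho$), and the reduction above closes the argument.

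I expect the main obstacle to be precisely the passage from the $\limsup$-type Delecroix--Hubert--Leli\`evre and Chaika--Eskin inputs to the quantitative bulk estimate. This requires: (i) making the renormalization comparison $d(x,\gamma_{e^r}(x,\theta))\asymp\|\Phi_r\|\,|\langle\widehat\Phi_r,v_r\rangle|$ precise, with explicit control of the multiplicative errors and of the $o(r)$ in $\|\Phi_r\|=e^{\lambda r+o(r)}$, so that a value of the inner product bounded below really does force a near-maximal displacement; (ii) handling the base point, since $v_r$ depends not only on $Y_r\in\mathcal M$ but on the starting point of the trajectory, so that one must run the argument on the appropriate bundle over $\mathcal M$ and verify that the Chaika--Eskin equidistribution, hence the proportional-window equidistribution, persists there; and (iii) establishing the non-degeneracy $\rho_0>0$ from the lower bound on the Delecroix--Hubert--Leli\`evre diffusion rate, together with the Oseledets genericity of $\phi$ for the top exponent. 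The remaining steps --- the change of variables, the $e^r$-weighting bookkeeping, the $\epsilon,\epsilon_0\to0$ limits, and the reduction of the general $\Z^d$-cover statement to the same scheme --- are routine.
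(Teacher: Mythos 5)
Your upper bound is correct and elementary (the Delecroix--Hubert--Leli\`evre $\limsup$ gives $d\le t^{\lambda+\epsilon}$ eventually, and integrating suffices), and the reduction ``bulk estimate $\Rightarrow$ $\liminf\ge\lambda$'' is a correct piece of bookkeeping. The genuine gap is in your route to the bulk estimate, and the obstacle is not quite among the three you list. You want to feed the indicator of $\{Y:\,|\langle E^+(Y),v\rangle|\ge\delta\}$ into the Chaika--Eskin Birkhoff theorem, but $E^+(Y)$ is only a \emph{measurable} function of $Y$: Oseledets subspaces do not vary continuously on the orbit closure, while Theorem~\ref{CE1} is stated (and proved) for continuous compactly supported test functions. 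Your parenthetical ``which depends continuously on $Y_r$ along the orbit'' conflates determinacy along one trajectory with continuity on $\mathcal{M}$, which is what the equidistribution step actually needs. One would have to interpose a Lusin-type approximation with quantitative error control, plus a rate for the projective convergence $\widehat\Phi_r\to E^+(Y_r)$, before the positivity of $\rho_0$ could be turned into a measure estimate on $[(1-\epsilon_0)s,s]$. None of that is routine, and in fact the paper does \emph{not} prove your bulk estimate at all.

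What the paper proves is strictly weaker in log-time density but still enough because of the $e^r$-weighting, and it sidesteps the measurability issue entirely. Working with a cross-section $V$ for the Teichm\"uller flow and Oseledets along the return sequence, Proposition~\ref{LDC} produces sparse excursion times $\n_n$ with $\log\n_n/n\to\beta$ and $|\langle f,C_{\n_n}(S_\theta)\rangle|\ge\n_n^{\Lambda(f)-\epsilon}$. The key ingredient your scheme is missing is then a \emph{uniform-in-starting-point} upper bound (Proposition~\ref{upbound1}, after Zorich): $|\langle f,C_p(S_\theta,y)\rangle|\le p^{\Lambda(f)+\epsilon}$ for all large $p$ and all $y\in I^*$. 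Because the return cycle is an additive cocycle,
\begin{equation*}
|\langle f,C_{\n_n+k}(S_\theta)\rangle|\ \ge\ |\langle f,C_{\n_n}(S_\theta)\rangle|\ -\ |\langle f,C_{k}(S_\theta,T^{\n_n}x)\rangle|\,,
\end{equation*}
and the uniform bound applied at the \emph{moving} base point $T^{\n_n}x$ shows the displacement stays $\ge\tfrac12\n_n^{\Lambda(f)-\epsilon}$ for an entire window $0\le k\le p_n\approx\n_n^{(\Lambda(f)-\epsilon)/(\Lambda(f)+\epsilon)}$. These windows have \emph{vanishing} density in log-time (length $\approx p_n/\n_n\to0$), so they never yield a positive-density bulk estimate; but summing over the window gives a contribution $p_n\cdot\n_n^{\Lambda(f)-\epsilon}\approx\n_n^{1+\Lambda(f)-\delta(\epsilon)}$ to $\sum_{k\le\n}|\langle f,C_k\rangle|$, and the exponential spacing of the $\n_n$ then propagates this lower bound to every large $\n$. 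So if you want to pursue your scheme, I would first try to import the uniform upper bound and see whether you can drop equidistribution entirely; otherwise the measurability of $E^+$ has to be confronted head-on.
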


The strategy of the proof is similar to the proof of the Theorem of
Delecroix--Hubert--Lelièvre presented above. The first step is
standard: following the Katok--Zemliakov construction
(\cite{zemlyakov1975topological}, see also~\cite{masur2002rational}), the authors of \cite{delecroix2014diffusion}
construct a translation surface $X_\infty(a,b)$ without boundary from
four copies of the wind-tree table $T(a,b)$. By construction, the
billiard flow in $T(a,b)$ is conjugate to the geodesic flow on the
translation surface $X_\infty(a,b)$. The fact that the wind-tree
table is $\Z^2$-periodic implies that the surface $X_\infty(a,b)$ is a
$\Z^2$-cover over a compact translation surface $X(a,b)$. In this way,
we reduce the problem to a particular case of the study of geodesics
on $\Z^d$-covers of translation surfaces.

We call $\tilde{S}\to S$ a \textit{lattice cover} if $\tilde{S}$ is
a connected $\Z^d$-cover of a compact translation surface $S$.
We will see that such a cover is defined by a
cohomology class $f$ in the first cohomology $H^1(S,\Z^d)$.
As in~\cite{delecroix2014diffusion},
the diffusion of a billiard trajectory can be
studied through the intersection pairing of $f$
with geodesic segments on $S$ completed to closed paths
in a natural way. We denote by
$\tilde{\phi}_t(x,\theta),\phi_t(x,\theta)$ the geodesic flows on
$\tilde{S}$ and $S$ respectively.
By $d(x,y)$ we denote the distance between points $x$ and $y$
on the translation surface $\tilde S$.
We prove the following theorem.

\begin{thm}
\label{SB}
Let $\tilde{S}\rightarrow S$ a lattice cover; let $f$ be the
associated cohomology class in $H^1(S,\Z^d)$. For every $x\in\tilde S$
and almost every $\theta$ we have
\begin{equation*}
\lim_{T\rightarrow +\infty}
\frac{\log \frac{1}{T}\int_0^T d(x,\tilde{\phi}_t(x,\theta))dt}{\log T}
= \Lambda(f),
\end{equation*}
where $\Lambda(f)$ is the Lyapunov exponent of $f$ for the
Kontsevich--Zorich cocycle and with respect to the affine invariant
measure supported by the orbit closure of $S$ for the action of
$\text{SL}_2(\R)$.
\end{thm}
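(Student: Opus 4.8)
The plan is to prove separately that $\limsup_{T\to\infty}\frac{1}{\log T}\log\bigl(\tfrac1T\int_0^T d(x,\tilde{\phi}_t(x,\theta))\,dt\bigr)\le\Lambda(f)$ and that the corresponding $\liminf$ is $\ge\Lambda(f)$, for a.e.\ $\theta$. The upper bound is essentially free: since $\tfrac1T\int_0^T d(x,\tilde{\phi}_t(x,\theta))\,dt\le\sup_{0\le t\le T}d(x,\tilde{\phi}_t(x,\theta))$, the left-hand rate is dominated by the \emph{maximum} diffusion rate, which equals $\Lambda(f)$ for a.e.\ $\theta$ by the rigidity and genericity results of Eskin--Mirzakhani and Chaika--Eskin recalled above. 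All the work is in the lower bound.

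For the lower bound I would combine three ingredients. (i) A quantitative form of the lower half of the maximum-diffusion theorem, expressed through the Kontsevich--Zorich cocycle: let $A_s$ denote the cocycle along the Teichmüller geodesic issued from $(S,\theta)$, acting on $H^1(S,\R^d)$. I expect to produce a positive-measure compact set $K$ in the orbit closure $\mathcal M$ of $S$ (equipped with the affine invariant measure $\mu$ of the statement), a constant $c_0>0$, and, for a.e.\ $\theta$, a measurable set $\mathcal G(\theta)\subset[0,\infty)$ such that: $d(x,\tilde{\phi}_{e^s}(x,\theta))\ge c_0\,\|A_sf\|$ for every $s\in\mathcal G(\theta)$; and $|\mathcal G(\theta)\cap I|\ge c_0$ for every unit interval $I$ over whose $1$-neighbourhood the Teichmüller geodesic stays in $K$. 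The content is the renormalization dictionary relating $d(x,\tilde{\phi}_t(x,\theta))$ to the holonomy of the cover --- the pairing of $f$ with $\gamma_{[0,t]}$ closed up --- together with the fact that on a surface in the thick part $K$ a geodesic segment of renormalized flat length $\asymp 1$ pairs non-negligibly with the transported class $A_sf$, whose $C^0$-norm (realized by the flat-harmonic form) is comparable to its cohomological norm $\|A_sf\|$ there, for a definite proportion of the renormalized times; the latter is a quantitative equidistribution statement for the flat geodesic flow relative to the renormalization. (ii) Oseledets' multiplicative ergodic theorem for the Kontsevich--Zorich cocycle over $(\mathcal M,\mu)$ for the Teichmüller geodesic flow $g_s$ --- valid along the Teichmüller geodesic of $(S,\theta)$ for a.e.\ $\theta$ by Chaika--Eskin --- which gives the \emph{genuine limit} $\tfrac1s\log\|A_sf\|\to\Lambda(f)$; in particular $\|A_sf\|\ge e^{(\Lambda(f)-\epsilon)s}$ for all $s$ large. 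This is the structural point that makes the scheme work: the cocycle has a genuine limit exponent, so it is never atypically small, whereas $d(x,\tilde{\phi}_t(x,\theta))$ fluctuates far more wildly (for the wind-tree, recurrence forces $\liminf_t d(x,\tilde{\phi}_t(x,\theta))=0$). (iii) Control of the excursions of the Teichmüller geodesic: by a logarithm-law estimate for $g_s$ on $\mathcal M$, the excursion of the Teichmüller geodesic of $(S,\theta)$ around time $S$ has length $O(\log S)$, so for all large $S$ there is a unit interval $I\subset[S-O(\log S),S]$ over whose $1$-neighbourhood the geodesic stays inside $K$.

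These are assembled through the change of variables $t=e^s$:
\[
\frac1T\int_0^T d(x,\tilde{\phi}_t(x,\theta))\,dt=e^{-S}\int_0^S d\bigl(x,\tilde{\phi}_{e^s}(x,\theta)\bigr)e^s\,ds,\qquad S:=\log T,
\]
where the weight $e^s$ concentrates the integral near $s=S$. Picking $I=[\sigma,\sigma+1]$ as in (iii), restricting the integral to $\mathcal G(\theta)\cap I$ (of measure $\ge c_0$), and using $d(x,\tilde{\phi}_{e^s}(x,\theta))\ge c_0\|A_sf\|\ge c_0 e^{(\Lambda(f)-\epsilon)s}\ge c_0 e^{(\Lambda(f)-\epsilon)\sigma}$ on it together with $e^s\ge e^\sigma$ and $\sigma\ge S-O(\log S)$, one gets
\[
\frac1T\int_0^T d(x,\tilde{\phi}_t(x,\theta))\,dt\ \ge\ c_0^2\,e^{(1+\Lambda(f)-\epsilon)\sigma-S}\ \ge\ c_0^2\,e^{(\Lambda(f)-\epsilon)S-O(\log S)}\ =\ c\,T^{\Lambda(f)-\epsilon}(\log T)^{-C}
\]
for suitable $c,C>0$. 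Taking logarithms, dividing by $\log T$, and letting $T\to\infty$ and then $\epsilon\to0$ yields $\liminf\ge\Lambda(f)$; with the upper bound this proves the theorem.

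The decisive step, and the main obstacle, is ingredient (i): upgrading ``$d(x,\tilde{\phi}_t(x,\theta))$ attains order $t^{\Lambda(f)}$ along a sequence of times'' --- in essence what the maximum-diffusion theorem provides --- to ``\dots on a set of times of definite proportion inside every thick window''. This upgrade is unavoidable: a single time $t^*\le T$ with $d(x,\tilde{\phi}_{t^*}(x,\theta))=M$ only forces, via the Lipschitz bound $|d(x,\tilde{\phi}_{t+h}(x,\theta))-d(x,\tilde{\phi}_t(x,\theta))|\le h$, a contribution $\gtrsim M^2\asymp T^{2\Lambda(f)}$ to $\int_0^T d(x,\tilde{\phi}_t(x,\theta))\,dt$, hence an average of order $T^{2\Lambda(f)-1}$ --- and $2\Lambda(f)-1<\Lambda(f)$ because $\Lambda(f)<1$ (for the wind-tree $\Lambda(f)=\tfrac23$, giving exponent $\tfrac13$ in place of $\tfrac23$), and the same loss persists even with one such spike at every dyadic scale. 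So the displacement genuinely has to be large on a fat set of times, and establishing this is where the quantitative equidistribution of the flat geodesic on the renormalized thick surface, and the $C^0$-versus-cohomology norm comparison for the renormalized harmonic form, are essential.
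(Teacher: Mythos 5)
Your overall strategy (split into upper/lower bounds, change variables $t=e^s$, concentrate near $s=\log T$) matches the paper's outline, and your upper bound is fine: the average is bounded by the maximum, whose diffusion rate is $\Lambda(f)$ by the uniform Zorich-type bound (Proposition~\ref{upbound1} in the paper). You also correctly diagnose the core difficulty in the lower bound: a single spike of height $M\asymp T^{\Lambda(f)}$ plus the trivial Lipschitz persistence $|d(x,\tilde\phi_{t+h})-d(x,\tilde\phi_t)|\le h$ only yields an average of order $T^{2\Lambda(f)-1}$, which is too small.

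The gap is in your ingredient~(i). You assert, without proof, a ``positive proportion'' equidistribution: that for a definite fraction of renormalized times $s$ in every thick window, the renormalized unit-length segment pairs with $A_sf$ at a fixed fraction of $\|A_sf\|$. This is a genuinely strong claim --- a short curve on the renormalized surface can perfectly well pair near zero with a given cohomology class, and nothing you cite controls how often that happens --- and it is not what the paper proves. The paper sidesteps it entirely by a much more economical mechanism. It passes to the discrete return cycles $C_\n(S_\theta,x)$ and exploits two facts in tandem: (a) the additive cocycle relation $C_{\n_n+p}(S_\theta,x)=C_{\n_n}(S_\theta,x)+C_p(S_\theta,T^{\n_n}x)$, and (b) the \emph{uniform-in-starting-point} upper bound $|\langle f,C_p(S_\theta,y)\rangle|\le p^{\Lambda(f)+\epsilon}$ for all $y\in I^*$ and all large $p$ (Proposition~\ref{upbound1}; this is exactly where Zorich's work enters). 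By (a) and the triangle inequality, once a large excursion produces $|\langle f,C_{\n_n}\rangle|\ge \n_n^{\Lambda(f)-\epsilon}$ (Proposition~\ref{LDC}, via Oseledets on a cross section and Lemma~\ref{ie}), the quantity $|\langle f,C_{\n_n+p}\rangle|$ remains at least $\tfrac12 \n_n^{\Lambda(f)-\epsilon}$ for \emph{all} $p\lesssim \n_n^{(\Lambda(f)-\epsilon)/(\Lambda(f)+\epsilon)}$. That persistence exponent tends to $1$ as $\epsilon\to0$ --- far longer than the Lipschitz window $\n_n^{\Lambda(f)}$ --- so the contribution to $\sum_{k\le\n}|\langle f,C_k\rangle|$ from this single window is already $\gtrsim \n_n^{\Lambda(f)+1-o(1)}$, and since the cumulative sum is monotone and $\log\n_{n+1}/\log\n_n\to1$, the $\liminf$ of $\log\bigl(\sum_{k\le\n}|\langle f,C_k\rangle|\bigr)/\log\n$ is $\ge\Lambda(f)+1$, which is the needed lower bound after Lemma~\ref{lem_covering_gathering}. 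In short: you do not need the displacement to be large a positive proportion of the time, nor any logarithm-law input; you need the \emph{uniform} upper bound to show a single spike decays only sublinearly in the number of steps. If you want to salvage your scheme, this uniform bound is the missing ingredient to replace your unproved~(i).
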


The proof of this proposition uses tools from dynamics on the moduli
spaces of translation surfaces. The corresponding technique was
developed by A.~Zorich in~\cite{zorich1999wind}, by G.~Forni
in~\cite{forni2013introduction} and applied in~\cite{delecroix2014diffusion}. Let
$\pi:\tilde{S}\to S$ be a lattice cover, and let $f$ be the
associated cohomology class. Let $(\tilde{x},x)$ be a pair of points
in $\tilde S$ and in $S$ respectively, such that $\pi(\tilde{x})=x$.
To study the drift of a trajectory starting from $\tilde{x}$ we
project the trajectory to the base $S$. We consider a small
horizontal interval $I$ on $S$ with one of the endpoints at $x$ and
the first return map of the vertical geodesic flow to $I$. The first
return map is generally an interval exchange transformation; the
vertical flow on $S$ can be represented by Veech's zippered
rectangles construction as a suspension flow over this interval
exchange transformation. Consider a piece of trajectory starting at
$\tilde x$ and going up to the $N$th visit to the interval $I$.
Joining the endpoints of such piece of trajectory along $I$, we get a
cycle $C_N(S,x)$ in the first homology of $S$. The drift of the
trajectory in $\tilde S$ is given by the intersection pairing
of this cycle with the cohomology class $f$:
\begin{equation}
\label{eq:f:of:Cn}
    \langle f,C_N(S,x)\rangle\,.
\end{equation}

Following Zorich and Forni,
we use a renormalization
procedure given by the Teichmüller flow on the moduli space of translation
surfaces
to study the behavior of the above quantity
for large $N$.  The Teichmüller flow $g_s$ contracts vertical
trajectories, so for an appropriate choice of time $s(N)$
cycles $C_N(S,x)$
representing long trajectories of $\phi_t$ (and thus having
large norm) are mapped to
cycles on the new surface
$g_\tau\cdot S$ having norm comparable to $1$. The growth of
expression~\eqref{eq:f:of:Cn}
is then given by the Kontsevitch--Zorich
cocycle governed by the Gauss--Manin connection on the
Hodge bundle.
If we fix a cross section for the Teichmüller flow in the moduli space
of translation surfaces, we obtain a sequence of flat surface
$S_n$ in the cross section such that
\begin{equation*}
    g_{s_n}\cdot S =\Gamma_n \cdot S_n\,,
\end{equation*}
where $\Gamma_n$ is an element of the mapping class group. The
behavior of the expression~\eqref{eq:f:of:Cn} for large $N$ can be related to the
behavior of the sequence $\Gamma_n^*f$, which is given by the
Oseledets theorem.

In Section~\ref{s:Background} we recall some classical results in
the theory of translation surfaces. Namely, we recall the definition of
moduli spaces of translation surfaces, the action of $\text{SL}_2(\mathbb{R})$
and we define the Kontsevich--Zorich cocycle on the Hodge bundle. We also
recall fundamental theorems on dynamics on moduli spaces of
flat surfaces.

In Section~\ref{s:Large:Excursions} we prove Theorem~\ref{SB}. Using
Veech's zippered rectangles construction presented in
Section~\ref{paragraph_coverings_estimate},
we relate the statement of the
Theorem to the behavior of the sum
\begin{equation*}
   \sum_{k\le \n} \langle f,C_k(S,x)\rangle
\end{equation*}
for large $\n$. We define a convenient cross-section for the
Teichmüller flow in Section~\ref{paragraph_cross_section}. In
Section~\ref{paragraph_large_excurssion} we use this cross section
and the renormalization procedure to construct trajectories with a
good excursion rates by applying the Oseldet's theorem. In
Section~\ref{paragraph_average} we establish the upper bound in
Theorem~\ref{SB} by using the Oseldet's theorem. To obtain the lower
bound we use the trajectories constructed in
Section~\ref{paragraph_large_excurssion} and a uniform upper bound
to control the diffusion near these excursions. Finally, a proof of a
technical Lemma~\ref{ie} is placed to an Appendix.

\subsection*{Acknowledgements.}
I am very grateful to Pascal Hubert for supporting me during this
work at the I2M of Marseille, and introducing me to Teichmüller
dynamics and its applications. I am also very grateful to Anton Zorich for a careful read of this paper.

\section{Background facts and constructions}
\label{s:Background}
The present paper uses tools from the theory of compact flat
surfaces. We refer to the surveys of A. Zorich \cite{zorich2006flat}
and of J.C. Yoccoz \cite{yoccoz2007interval} for an elementary
introduction to the subject and a survey~\cite{forni2013introduction}
of G. Forni and C. Matheus for more details. In this section, we
introduce the material which will be used in the proof of
Theorems \ref{SB1} and \ref{SB} announced in the introduction. We
start by presenting some basic facts on flat surfaces and lattice
coverings. We also define the Teichmüller and moduli spaces of flat
surfaces and give recent results related to the dynamics of the
action of $\text{SL}_2(\R)$. Finally we recall Veech's zippered
rectangle construction which we use in the study of the geodesic flow
of a flat surface.

\subsection*{Compact flat surfaces}
A compact translation surface $S$ is commonly defined as a pair
$(S,\omega)$ where $S$ is a compact Riemann surface and $\omega$ is a
non-zero holomorphic differential on $S$. The one-form $\omega$
defines a flat metric $ds^2=\text{Re} (\omega \otimes \bar{
\omega})$. Compactness of the surface $S$ implies that the induced
area of $S$ is necessarily finite.

The metric as above has singularities
at the finite set $\Sigma_{\omega}$ of zeros of $\omega$;
at a point $x\in \Sigma_{\omega}$ the metric has a conical
singularity of angle $2\pi (k_x+1)$, where $k_x$ is the order of the
zero. If $g$ is the genus of the surface, the Gauss--Bonnet formula
gives
   
\begin{equation*}
    \sum_x k_x =2g-2\,.
\end{equation*}

By taking local primitives of $\omega$ in simply-connected domains in
the complement to $\Sigma_{\omega}$, we get an atlas of charts on
$S\backslash \Sigma_{\omega}$ with values in $\C$ and such that the
transition functions are translations.

The geodesic flow is defined on the tangent bundle to $S\backslash
\Sigma_{\omega}$. A trajectory may land to
a singularity in a finite time in onward or in a backward direction.
Such trajectories are called \textit{singular trajectories}.

The translation structure allows
to trivialize the tangent
bundle over $S\backslash \Sigma_{\omega}$. Thus,
a geodesic on a translation surface
preserves the initial direction, which allows to
consider a \textit{directional flow}
in any fixed direction $\theta$.
We mainly consider the vertical (pointing to the North) and the
horizontal (pointing to the Est) directional flows. The geodesic flow on
a flat surface has been studied extensively in the past decades.
The following fundamental result concerning the
directional flows is due to S.~Kerckhoff, H.~Masur and
P.~Smillie~\cite{kerckhoff1986ergodicity}

\begin{theo}[Kerckoff, Masur, Smillie]
\label{KMS}
For every translation surface $S$,
the directional flow in direction $\theta$
is uniquely ergodic for almost all directions $\theta$.
\end{theo}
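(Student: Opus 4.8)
The plan is to derive unique ergodicity in almost every direction from two ingredients: a \emph{renormalisation criterion} for unique ergodicity of a directional flow (Masur's criterion), and a \emph{non-divergence statement} asserting that, for a fixed surface, the Teichmüller geodesic ray issued from $S$ returns infinitely often to a fixed compact part of the moduli space for almost every choice of direction. It is convenient to encode a direction $\theta$ by the rotated surface $r_\theta\cdot S$, where $r_\theta\in\mathrm{SO}(2)\subset\text{SL}_2(\R)$, so that the flow on $S$ in direction $\theta$ becomes the vertical flow on $r_\theta\cdot S$; one then has to show that for every $S$ the set of $\theta\in S^1$ for which the vertical flow on $r_\theta\cdot S$ is not uniquely ergodic is Lebesgue-null.

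\textit{Step 1 (Masur's criterion).} I would first prove that if the forward Teichmüller ray $t\mapsto g_t\cdot S$ admits a sequence of times $t_n\to+\infty$ along which $g_{t_n}\cdot S$ stays in a fixed compact subset of the moduli space --- equivalently, the length of the shortest saddle connection stays bounded away from $0$ along the $t_n$ --- then the vertical flow on $S$ is uniquely ergodic. Using Veech's zippered rectangles construction (recalled below) one realises the vertical flow as a suspension over a first-return interval exchange transformation $T$ on a horizontal segment, and the Teichmüller geodesic flow as a suspension over Rauzy--Veech renormalisation. An ergodic invariant probability for the vertical flow not proportional to the Lebesgue class produces a $T$-invariant probability whose vector of weights is not collinear with the vector of interval lengths; propagating this vector through the renormalisation shows that it prevents the renormalised length data from contracting uniformly, which in turn forces a saddle connection on $g_{t_n}\cdot S$ to become arbitrarily short, contradicting compactness. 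This is, in substance, the formal core, though by now standard and quotable.

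\textit{Step 2 (non-divergence for almost every direction).} Fix $S$ of area one, and for $\varepsilon>0$ let $\mathcal K_\varepsilon$ denote the compact set of area-one surfaces whose shortest saddle connection has length $\ge\varepsilon$. The key point is the estimate, uniform in the integer $n$,
\begin{equation*}
\mathrm{Leb}\{\theta\in S^1:\ g_n\cdot(r_\theta\cdot S)\notin\mathcal K_\varepsilon\}\ \le\ C(S)\,\varepsilon^2 .
\end{equation*}
Indeed $g_n\cdot(r_\theta\cdot S)$ carries a saddle connection of length $<\varepsilon$ exactly when $S$ carries one whose holonomy $z$ satisfies $|\mathrm{Re}(e^{i\theta}z)|<\varepsilon e^{-n}$ and $|\mathrm{Im}(e^{i\theta}z)|<\varepsilon e^{n}$; each such $z$ contributes a set of directions of measure $O(\varepsilon e^{-n}/|z|)$ and forces $|z|\le C\varepsilon e^{n}$, so the claimed bound follows (up to finitely many exceptional $n$) from the classical quadratic count $\#\{\text{saddle connections of }S\text{ with }|z|\le R\}\le C(S)R^{2}$, which gives $\sum_{|z|\le R}|z|^{-1}\le C'(S)R$. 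By Fubini--Tonelli together with Fatou's lemma,
\begin{equation*}
\int_{S^1}\liminf_{N\to\infty}\frac1N\#\{n\le N:\ g_n\cdot(r_\theta\cdot S)\notin\mathcal K_\varepsilon\}\,d\theta\ \le\ C(S)\,\varepsilon^2<1
\end{equation*}
for $\varepsilon$ small; hence for almost every $\theta$ the sequence $(g_n\cdot(r_\theta\cdot S))_n$ visits $\mathcal K_\varepsilon$ with positive lower frequency, and in particular infinitely often. Intersecting over a sequence $\varepsilon_k\downarrow0$ we get a single full-measure set of $\theta$ that works for all $\varepsilon_k$ simultaneously.

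Combining the two steps, for almost every $\theta$ the ray $t\mapsto g_t\cdot(r_\theta\cdot S)$ is recurrent to a compact set, so Step 1 shows the vertical flow on $r_\theta\cdot S$ --- that is, the flow on $S$ in direction $\theta$ --- is uniquely ergodic, which is the assertion. I expect the two genuine obstacles to be the interval-exchange bookkeeping underlying Masur's criterion in Step 1 (translating non-unique ergodicity into a non-contracting direction for the renormalisation cocycle) and, in Step 2, the quadratic saddle-connection count together with the uniform-in-$n$ measure estimate; the remaining passage from ``deep excursions have small frequency'' to honest recurrence is soft.
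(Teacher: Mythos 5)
The paper does not prove Theorem~\ref{KMS}: it is quoted as a background result with a reference to \cite{kerckhoff1986ergodicity}, so there is no internal proof for me to compare against. Your sketch is essentially the standard modern route --- Masur's criterion (recurrence of the Teichm\"uller ray to a compact subset of the stratum implies unique ergodicity of the vertical foliation) combined with a Fatou argument fed by Masur's quadratic upper bound on the number of saddle connections of $S$ of length at most $R$. The elementary bookkeeping is right: the set of directions on which $g_n r_\theta$ shrinks a fixed holonomy $z$ below $\varepsilon$ has measure $O(\varepsilon e^{-n}/|z|)$ and is empty unless $|z|\le C\varepsilon e^{n}$, so the $e^{\pm n}$ cancel and the sum over $z$ is $O(\varepsilon^2)$ uniformly in $n$.

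There is, however, one measure-theoretic slip at the end. From $\int\liminf_N\tfrac1N\#\{n\le N:\text{bad}\}\,d\theta\le C\varepsilon^2$ you do \emph{not} get that almost every $\theta$ has positive lower frequency of visits to $\mathcal K_\varepsilon$ for that fixed $\varepsilon$; Markov's inequality only gives that the set of $\theta$ with finitely many visits to $\mathcal K_\varepsilon$ has measure at most $C\varepsilon^2$. The correct conclusion comes from observing that $\mathcal K_\varepsilon$ increases as $\varepsilon\downarrow0$, so the sets $G_\varepsilon$ of recurrent directions are nested increasing, and it is the \emph{union} $\bigcup_k G_{\varepsilon_k}$ (not the intersection) that has full measure; for each $\theta$ in the union some $\mathcal K_{\varepsilon_k}$ captures infinitely many returns and Masur's criterion applies. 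Finally, for context: this is not the original Kerckhoff--Masur--Smillie argument, whose 1986 proof predates both Masur's criterion and the quadratic saddle-connection count and proceeds instead by analysing limit measures along the Teichm\"uller geodesic flow, already in the setting of quadratic differentials and rational billiards; your route is cleaner but takes those two later theorems of Masur as black boxes.
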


A saddle connection on a flat surface is a singular trajectory which
connects two singularities (possibly the same). We are particularly
interested in the vertical and horizontal saddle connections. The
following result from~\cite{keane1975interval}
is often called \textit{Keane's  criterion}.

\begin{theo}[Keane \cite{keane1975interval}]
If a flat surface has no saddle connections in direction $\theta$,
then the geodesic flow in this direction is minimal.
\end{theo}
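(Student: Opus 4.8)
The plan is to reduce the statement to the classical one-dimensional criterion of M.~Keane~\cite{keane1975interval}, to invoke (and recall the idea of) Keane's minimality theorem for interval exchange transformations, and to transport the conclusion back to $S$.

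\emph{Step 1: reduction to an interval exchange.} After rotating the translation structure I may assume $\theta$ is vertical, so that by hypothesis $S$ has no vertical saddle connection. I would choose a horizontal transversal $I\subset S$ adapted to the vertical flow — concretely the base interval of a Veech zippered rectangle decomposition in the vertical direction, recalled in Section~\ref{paragraph_coverings_estimate} — so that the first return map $T\colon I\to I$ of the vertical flow is an interval exchange transformation exchanging subintervals $I_1,\dots,I_d$, every vertical leaf of $S$ meets $I$, and the vertical flow on $S$ is the suspension of $T$ over a return-time function bounded above and below away from $0$. The discontinuities $\beta_1,\dots,\beta_{d-1}$ of $T$, that is, the interior subdivision points of $I$, are exactly the points where $I$ is first met by the downward vertical separatrices issued from the singularities of $\omega$. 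Unfolding along $I$ a finite vertical trajectory joining two singularities, one sees that $S$ has a vertical saddle connection if and only if $T^{m}(\beta_k)=\beta_l$ for some $m\ge 0$ and some $k,l$ (for a fixed one-sided convention at the $\beta$'s, the degenerate case of a periodic separatrix included); hence the hypothesis is exactly Keane's \emph{infinite distinct orbit condition}, namely that the two-sided $T$-orbits of $\beta_1,\dots,\beta_{d-1}$ are infinite and pairwise disjoint. In particular $T$ has no periodic point, since a periodic vertical leaf would be the core of a vertical cylinder whose maximal extension has boundary a union of vertical saddle connections.

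\emph{Step 2: minimality of an i.d.o.c.\ interval exchange (Keane's theorem).} Here I would argue by contradiction. If $T$ were not minimal there would be a nonempty closed proper subset $M\subsetneq I$, invariant under $T$ off the finite set of discontinuities and minimal for inclusion; since $T$ has no periodic point $M$ is infinite, so $I\setminus M$ is a nonempty disjoint union of open ``gaps''. Since $T$ preserves Lebesgue measure and is a bijection off a finite set, it induces on the set of gaps a measure-preserving partial injection, defined and isometric on every gap containing no $\beta_k$ — hence on all but finitely many gaps — and likewise for $T^{-1}$. The forward orbit of a gap under this partial injection is a sequence of gaps of a fixed length, so it is either periodic — forcing some power $T^{p}$ to be a translation of an interval onto itself, hence the identity on a nondegenerate interval, a periodic interval, which is excluded — or finite, ending at a gap whose interior contains some $\beta_k$; symmetrically for $T^{-1}$. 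Combining the two, every gap lies on a finite $T$-chain of gaps running from one that contains a discontinuity of $T^{-1}$ to one that contains some $\beta_k$; there are only finitely many such chains, so $M$ has finitely many gaps and is therefore a finite union of intervals, with finite boundary $E$ permuted by $T$ away from the discontinuities. If no forward orbit of a point of $E$ ever met the discontinuities, all points of $E$ would be periodic, contradicting the absence of periodic points; so some $e\in E$ satisfies $T^{j}(e)=\beta_k$ for the least such $j$, whence $T^{j-1}(e)=T^{-1}(\beta_k)\in E$, and then — unless one already has a relation $T(\beta_{k'})=\beta_k$ — also $\beta_k\in E$. Chasing the finitely many $T$-orbits inside $E$ in this fashion forces a coincidence $T^{m}(\beta_k)=\beta_l$, contradicting the infinite distinct orbit condition. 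Hence $M=I$ and $T$ is minimal; the combinatorial bookkeeping we have only outlined is precisely the content of~\cite{keane1975interval}.

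\emph{Step 3: back to the surface, and the main obstacle.} Since every vertical leaf of $S$ meets $I$ and the first-return time of $T$ is bounded above, a complete vertical geodesic of $S$ is the suspension of a two-sided $T$-orbit, and density of that orbit in $I$ (Step~2) entails density of the geodesic in $S$; the same applies to every forward-complete vertical trajectory, so the vertical flow on $S$ is minimal, which is the assertion. The only substantial input is Keane's theorem of Step~2; within it the delicate point is the last combinatorial argument, which converts the structure of a hypothetical minimal set — its complementary gaps and the $T$-action on them — into an explicit coincidence among the forward orbits of the discontinuities $\beta_k$, exactly the situation ruled out by the no-saddle-connection hypothesis. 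Step~1 is routine but requires care with the one-sided conventions at the $\beta_k$ and with the choice of an adapted transversal; Step~3 is standard.
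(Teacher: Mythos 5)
The paper does not prove this statement: it is quoted from Keane's 1975 paper as a classical background fact in Section~\ref{s:Background}, so there is no internal proof to compare yours against. Your argument is the classical one and is, in outline, correct: rotate so the direction is vertical; pass to the first-return interval exchange $T$ on a full horizontal transversal; observe that the absence of vertical saddle connections rules out vertical cylinders (hence periodic points of $T$) and translates into Keane's infinite-distinct-orbit condition; prove minimality of such a $T$ by the ``gaps'' argument (a proper closed invariant set has complementary gaps of fixed length permuted by $T$ off the discontinuities, so there are only finitely many gaps, and chasing the finitely many gap-endpoints through the discontinuities forces a relation between orbits of discontinuities, contradicting the hypothesis); finally suspend back to $S$.

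Two small points of care. First, in Step~1 you write the dictionary as ``$S$ has a vertical saddle connection iff $T^m(\beta_k)=\beta_l$''; strictly speaking a vertical saddle connection that meets $I$ corresponds to a relation $T^m(\beta'_j)=\beta_k$ between a discontinuity $\beta'_j$ of $T^{-1}$ (where the saddle connection leaves its initial singularity) and a discontinuity $\beta_k$ of $T$ (where it reaches its terminal singularity). For an irreducible IET arising from a zippered-rectangle transversal this is equivalent to the i.d.o.c.\ formulation, but the one-sided conventions you allude to really do enter here. Second, you explicitly acknowledge that the final combinatorial chase on the boundary set $E$ is only sketched; that step is indeed where all the bookkeeping in Keane's original paper goes, and as written your description of it (``unless one already has a relation $T(\beta_{k'})=\beta_k$\dots'') would need to be turned into a clean induction to be a complete proof. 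As a reconstruction of the argument the paper is citing, though, the outline is the right one.
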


\subsection*{Lattice coverings}
\label{lattice_cover}
A \textit{lattice cover of a translation surface} is a triple
$(\tilde{S},S,\pi)$. Where $S$ is a compact translation surface,
$\tilde{S}$ is a connected translation surface, and
$\pi: \tilde{S}\rightarrow S$ is a holomorphic $\Z^d$-covering map. Moreover, we assume 
that the holomorphic one-form $\tilde\omega$,
defining the translation structure
on $\tilde S$, is the pullback of the holomorphic one-form $\omega$
defining the
translation structure on $S$. That is $\pi^* \omega= \tilde{\omega}$.
Lattice $\Z$-covers were studied in~\cite{hooper2012generalized}
and~\cite{hooper2013dynamics}. For instance, we can think to a surface embedded in an highter dimentionnal torus and periodic in d-directions.

By definition, the monodromy group
of a lattice cover is
isomorphic to $\Z^d$.
We denote his action
on $\tilde{S}$ by $\mathbf{n}\cdot x$. If we
fix a base point  $x$ on $S$, an isomorphism of the monodromy
group with $\Z^d$ defines a morphism

\begin{equation*}
f:\pi_1(S,x)\longrightarrow \Z^d.
\end{equation*}

This morphism factorizes thought the Abelianised group and defines a
class $f$ in the first cohomology of $S$:
\begin{equation*}
f\in \Hom(H_1(S,\Z),\Z^d)=H^1(S,\Z)\otimes_\Z \Z^d.
\end{equation*}
The class $f$ characterizes the cover up to an isomorphism. We
will use the cohomology class $f$ to measure the deviation of a
trajectory.

\subsection*{Teichmüller spaces}
 The Teichmüller spaces of flat surfaces are spaces of isotopy classes of
 flat surfaces with prescribed singularities (we refer to \cite{yoccoz2007interval},\cite{forni2013introduction} and \cite{zorich2006flat} for an
 introduction to the subject). We fix a compact topological surface
 $M$ of genus $g$ and a set  $\Sigma=\{x_1,...,x_r\}$ of distinct
 points in $M$. Let  $\kappa=(\kappa_1,...,\kappa_r)$  a set of non
 negative integers such that
 \begin{equation*}
     \sum_{l=1}^r \kappa_l= 2g-2,
 \end{equation*}
The Teichmüller space $\Th(\kappa)$  associated to $(M,\Sigma,\kappa)$ is the set of isotopy classes of compact marked translation  surfaces $\phi : S \rightarrow M$ such that, for all $i$ the surface $S$ have a conical singularity of angle $2\pi(\kappa_i+1)$ at $\phi^{-1}(x_i)$ (a zeros of order $\kappa_i$) and no singularities elsewhere.\\
 The Teichmüller spaces have a nice structure of complex linear
 manifolds, the local charts are given by taking period coordinates in
 the relative cohomology
\begin{equation*}
(\phi,S) \rightarrow [\phi_*\omega] \in H^1(M,\Sigma,\C).
 \end{equation*}
 The period map is a local homeomorphism (see
 \cite{yoccoz2007interval}).\\

The mapping class group $\text{Mod}(M,\Sigma)$ is the group of isotopy classes of diffeomorphisms on $(M,\Sigma)$, it acts naturally on the Teichmüller spaces. The moduli space $\h(\kappa)$ is the quotient space of
 $\Th(\kappa)$ by $\text{Mod}(M,\Sigma)$. Flat surfaces can have nontrivial automorphism groups; $\h(\kappa)$ is a complex orbifold in
general, but we can find a finite cover which is a manifold. The area of a surface define a function on the moduli and Teichmüller space
 and we denote $\Thc(\kappa),\hc(\kappa)$ the level set of surfaces
 with an area equal to one.\\
\begin{rem}[Pointed surfaces]
 We can add marked points that are regular to surfaces in  Teichmüller spaces; regular means that they do not correspond to conical singularities of the underlying surface. We call such surfaces: \textit{pointed surfaces}. For a given $\kappa$ we denote by $\Tho(\kappa)=\Th((0,\kappa))$ the Teichmüller space of flat surfaces in $\Th(\kappa)$ with a marked regular point (it's the universal curve over $\Th(\kappa)$). There is a natural projection $\Tho(\kappa)\to \Th(\kappa)$ that is invariant under the action of the mapping class group. An automorphism of a flat surface is locally given by a translation, if an automorphism stabilise a regular point then it's locally constant near this point and then
 it's globally trivial when the surface is connected. Using this we
 see that the moduli spaces $\ho(\kappa)$ of pointed surfaces are
 manifolds. 
 \end{rem}
\subsection*{Dynamics in the moduli space of translation surfaces}
In this paper we use the left action of $\text{SL}_2(\R)$ on the moduli spaces of translation surfaces $\h(\kappa)$ (see~\cite{zorich2006flat} or \cite{forni2013introduction} for a survey).
The group
acts by linear deformations of the flat structure and preserves the
area of flat surfaces then it also preserves the hypersuface $\hc(\kappa)$. The action
of the group $\text{SL}_2(\R)$ in period coordinates can be seen as
the diagonal action on the coefficients $\C\simeq\R^2$. Two
one-parameter subgroups of $\text{SL}_2(\R)$ are important in the
study of translation surfaces: the diagonal subgroup and the subgroup
$\text{SO}_2(\R)$. The flow induced by the diagonal subgroup is
denoted by
\begin{equation}
g_s = \begin{pmatrix} e^s & 0\\ 0 & e^{-s} \end{pmatrix}
\end{equation}
and called the \textit{Teichmüller flow}.
This flow shrinks the flat metric in the vertical
direction and dilates the metric in the horizontal direction. The flow
$r_\theta$ is defined by the action of $\text{SO}_2(\R)$:
\begin{equation*}
    r_\theta=\left(\begin{array}{rr} \cos(\theta) & -\sin(\theta)\\
    \sin(\theta) & \cos(\theta) \end{array}\right).
\end{equation*}
It does not change the flat metric, but it changes
the distinguished direction (``direction of the North'').
  

The vector space $H^1(S,\Sigma;\C)$ serving for period
coordinates admits a natural integer lattice
$H^1(S,\Sigma;\Z\oplus i\Z)$, which endows it with a
canonical volume element: the one, for which a fundamental domain of
the lattice has a unit volume. This volume element defined in period
coordinates induces a well-defined volume element in every stratum
$\mathcal{H}(\kappa)$ of translation surfaces called the
\textit{Masur--Veech} volume element. By construction the group
$\text{SL}_2(\R)$ preserves the Masur--Veech volume element and the flat area of the surfaces. Thus, the Masur--Veech volume element in $\h(\kappa)$
induces a natural volume element on the hypersurface
$\hc(\kappa)$. The following Theorem
proved independently by H.~Masur~\cite{masur1982interval} and by
W.~Veech~\cite{veech1978interval} is the keystone result in dynamics in the
moduli space of translation surfaces.

\begin{theo}[Masur, Veech]
The total volume of any stratum $\mathcal{H}^1(\kappa)$ in the moduli space
of compact translation surfaces is finite. The Teichmüller flow acts
ergodically on every connected component of every stratum.
\end{theo}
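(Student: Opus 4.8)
The plan is to follow Veech's approach, realizing the Teichmüller flow on a connected component of a stratum $\hc(\kappa)$ as a suspension flow over a renormalization dynamics on interval exchange transformations, so that both assertions become ergodic-theoretic statements about that renormalization. First I would introduce the relevant coordinates via the zippered rectangles construction. Fix a compact translation surface and a short horizontal segment $I$ transverse to the vertical flow; outside a measure-zero set of surfaces (those with a vertical saddle connection meeting $I$) the first return map of the vertical flow to $I$ is an interval exchange transformation $T$ on $d=2g+r-1$ subintervals with a combinatorial datum $\pi$ lying in a fixed Rauzy class $\mathcal{R}_\kappa$, and by Keane's criterion $T$ is minimal. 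Veech's construction reconstructs the surface from $(\pi,\lambda,\tau)$, where $\lambda\in\R^d_{>0}$ records the lengths of the subintervals (the real parts of the relative periods) and $\tau\in\R^d$ the suspension data, subject to the positivity conditions forcing all rectangle heights to be positive; this yields local coordinates on $\hc(\kappa)$ in which the Masur--Veech volume is, up to a normalizing constant, Lebesgue measure in $(\lambda,\tau)$ and in which the Teichmüller flow acts by $g_s\cdot(\pi,\lambda,\tau)=(\pi,e^s\lambda,e^{-s}\tau)$.

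Next I would introduce Rauzy--Veech induction: replacing $T$ by its first return to the largest suitable proper subinterval of $I$ produces a new interval exchange with datum again in $\mathcal{R}_\kappa$, and iterating gives a map $\mathcal{Q}$ on the union $\bigsqcup_{\pi\in\mathcal{R}_\kappa}\Delta_\pi$ of normalized length simplices $\Delta_\pi=\{\lambda\in\R^d_{>0}:\|\lambda\|=1\}$. The key structural fact is that the Teichmüller flow on $\hc(\kappa)$ is canonically isomorphic to the suspension of $\mathcal{Q}$ with roof function $r(\pi,\lambda)$ equal to the logarithmic contraction of the total length of $I$ under one induction step starting from $\lambda\in\Delta_\pi$. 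The analytic core is then: (i) $\mathcal{Q}$ preserves a measure on $\bigsqcup_\pi\Delta_\pi$ that is absolutely continuous with an explicit density; (ii) after passing, if necessary, to Zorich's acceleration $\mathcal{Q}_Z$ (which concatenates consecutive induction steps of the same type), the roof function is integrable against the invariant measure; (iii) $\mathcal{Q}$ is ergodic on the subset of $\bigsqcup_\pi\Delta_\pi$ indexed by each Rauzy class. Granting (i) and (ii), Kac's formula for the suspension gives $\mathrm{vol}(\hc(\kappa))=\int r\,d\mu<+\infty$; granting (iii), a suspension over an ergodic base map with integrable roof is ergodic, so $g_s$ is ergodic on the connected component of the stratum corresponding to $\mathcal{R}_\kappa$.

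For the ergodicity claim (iii) I would combine the combinatorial fact that the Rauzy diagram attached to each connected component of a stratum is connected with a Hopf-type distortion argument: almost every $\mathcal{Q}$-orbit returns infinitely often to a distinguished combinatorial type, the projective action of the return cocycle on $\Delta_\pi$ contracts, and along such returns the rescalings of $\lambda$ are asymptotically independent enough that every $\mathcal{Q}$-invariant measurable function is constant. Ergodicity is preserved under Zorich's acceleration, and the isomorphism with the suspension then transfers it to the Teichmüller flow.

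The principal obstacle is step (ii). The natural invariant measure for bare Rauzy--Veech induction is \emph{infinite}: near the cusp of a simplex $\Delta_\pi$ where one coordinate of $\lambda$ degenerates, its density fails to be integrable, so the finiteness of $\mathrm{vol}(\hc(\kappa))$ is not a formality. One must either pass to Zorich's acceleration, whose invariant measure is finite, or argue directly that the roof function $r$ decays fast enough near those cusps to remain integrable against the infinite measure; in both cases this rests on a precise estimate of the tail distribution of the Rauzy--Veech return time, which is the technical heart shared by Veech's combinatorial proof and Masur's geometric one (the latter proceeding instead via compactness of the ``thick part'' of the stratum together with a count of short saddle connections). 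A secondary difficulty, needed to pin down ergodicity component by component in (iii), is the combinatorial connectedness of the Rauzy diagrams associated with the connected components of each stratum.
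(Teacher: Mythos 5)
The paper does not prove this statement: it is quoted as a foundational result, attributed to Masur and Veech and used as a black box, so there is no ``paper's own proof'' to compare against. What I can do is assess your sketch on its own terms.

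Your outline is a faithful summary of Veech's route: zippered-rectangle coordinates with $g_s\cdot(\pi,\lambda,\tau)=(\pi,e^s\lambda,e^{-s}\tau)$, Rauzy--Veech induction, an absolutely continuous invariant density, Zorich acceleration to fix the non-integrability of the natural density near the simplex cusps, a Kac-type identity for finiteness, and ergodicity from connectedness of the Rauzy diagram plus projective contraction and bounded distortion along returns. You also correctly identify Masur's alternative geometric proof (compactness of the thick part plus a short saddle-connection count) and correctly locate the technical heart (the tail estimate on the Rauzy--Veech roof function). Two imprecisions worth flagging. First, the Teichmüller flow is a suspension over the Rauzy--Veech renormalization acting on the \emph{full} zippered-rectangle data $(\pi,\lambda,\tau)$, not over $\mathcal{Q}$ on the $\lambda$-simplices alone; the projective $\lambda$-dynamics is only a factor. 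The finiteness computation via Kac must therefore integrate over the $\tau$-polytope fibers as well, so $\mathrm{vol}(\hc(\kappa))=\int r\,d\mu$ only makes sense once $\mu$ is the invariant measure on the full cross-section. Second, what you have is explicitly a plan, not a proof: the contraction and distortion estimates needed for (iii), the exact invariant density in (i), the integrability bound in (ii), and the fact that extended Rauzy classes enumerate exactly the connected components of strata are each substantial and are only named, not established. As a roadmap to Veech's argument it is accurate; as a proof it is incomplete by design.
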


\subsection*{Orbit closures and affine invariant submanifolds}
Let $S$ be a translation surface of area $1$ in some stratum
$\mathcal{H}(\kappa)$,  $S\in \hc(\kappa)$. Detecting
the orbit closure
$\hc_S=\overline{\text{SL}_2(\R)\cdot S}$ (resp $\h_S=\overline{\text{GL}_2(\R)\cdot S}$) of the orbit of $S$ in
$\hc(\kappa)$ is one of the most important questions in the theory of
translation surfaces.
The Magic Wand theorem of
A.~Eskin--M.~Mirzakhani--A.~Mohammadi~\cite{eskin2013invariant},
\cite{eskin2015isolation}
(and an important complement by S.~Filip~\cite{filip2016splitting})
provide the general structure of
$\hc_S$.

\begin{theo}[Eskin--Mirzakhani--Mohammadi]
For each translation surface $S$ the orbit closure
$\h_S$
is locally a linear subspace in period coordinates
(possibly a finite collection of linear subspaces).

The orbit closure $\h_S$
admits a measure $\tilde{\mu}_{S}$ such that
 \begin{itemize}
\item Locally, the measure $\tilde{\mu}_S$ is proportional to the Lebesgue measure in the period coordinates.
\item The measure $\mu_S$ induced on $\hc_S$ is an invariant probability measure for the Teichmüller flow.
\item The measure $\mu_S$ is ergodic for the Teichmüller flow.
\end{itemize}
\end{theo}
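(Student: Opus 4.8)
The plan is to recall first that this is the \emph{Magic Wand theorem} of Eskin--Mirzakhani--Mohammadi; its proof is long and we only indicate the architecture here. The statement splits into two parts. Part~(i), \emph{measure classification}: every ergodic $\text{SL}_2(\R)$-invariant probability measure on $\hc(\kappa)$ is \emph{affine}, i.e.\ it is supported on an immersed submanifold that is linear in period coordinates and, locally on it, the measure is the normalized Lebesgue measure. Part~(ii), the \emph{orbit closure} statement: every $\hc_S = \overline{\text{SL}_2(\R)\cdot S}$ supports such an affine measure $\mu_S$ and in fact \emph{equals} the affine submanifold carrying it. Part~(ii) is deduced from~(i) together with an equidistribution theorem (spherical/ball averages of the $\text{SL}_2(\R)$-pushforwards of the orbit of $S$ converge to $\mu_S$), so the core of the argument is~(i), and the asserted structure of $\h_S$ follows from~(i) applied along the $\text{GL}_2(\R)$-orbit.

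For~(i) I would follow the Benoist--Quint / Eskin--Margulis--Mozes philosophy transported from homogeneous dynamics to Teichm\"uller dynamics. The skeleton: (a)~replace the $\text{SL}_2(\R)$-action by that of the upper-triangular subgroup $P=AN$ and work with the $P$-invariant ergodic measure $\mu$; (b)~study the conditional measures of $\mu$ along the strong-unstable horospherical foliation of the Teichm\"uller flow $g_s$, whose leaves are affine in period coordinates; (c)~show that these conditional measures are translation-invariant inside the subspace on which they are supported. Step~(c) is the \emph{exponential drift} step: take two nearby $\mu$-generic points on the same unstable leaf, flow by $g_s$ until they are macroscopically separated, use Birkhoff/Oseledets genericity along the intermediate times, and extract in the limit a new invariance direction for the conditional measures; iterating yields a full affine subspace $\mathcal{L}$ of invariance at $\mu$-a.e.\ point.

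Executing~(c) rigorously requires two hard inputs that replace the homogeneous-space machinery. The first is a \emph{factorization theorem} for the Kontsevich--Zorich cocycle: away from a set of small measure, the cocycle over a long $g_s$-orbit is uniformly well approximated by a ``block-conformal'' model (an isometric part plus a tame diagonalizable part), so that the drift increments can actually be read off and compared. The second is quantitative recurrence of the Teichm\"uller flow to compact sets, supplied by the Athreya and Avila--Gou\"ezel--Yoccoz estimates on $\hc(\kappa)$, which keeps the generic orbits from escaping into the cusp during the long flow times used in the drift construction. With these in hand one concludes that $\mu$ is affine Lebesgue on some $\mathcal{L}$; one then upgrades to the global statement — $\text{supp}\,\mu$ is a manifold, $\mathcal{L}$ is the same subspace at every point, it is defined over $\R$ (and, by S.~Filip's complement, over $\overline{\mathbb{Q}}$, yielding the quasi-projective structure), and the three bulleted properties hold.

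For~(ii) one feeds~(i) into the equidistribution scheme of Eskin--Mirzakhani--Mohammadi's second paper, which also needs a no-escape-of-mass statement and the \emph{isolation} theorem (affine invariant submanifolds are isolated from one another, hence form a locally finite countable family); this is what lets one identify $\hc_S$ with the \emph{smallest} affine submanifold containing $S$ rather than merely a union of such. The main obstacle throughout is step~(c): there is no ambient Lie group, the unstable foliation is only H\"older with delicate holonomy, and the cocycle is genuinely non-conformal, so the drift increments must be produced purely from the Hodge-theoretic structure of the Kontsevich--Zorich cocycle together with the quantitative recurrence estimates, which is the technical heart of the whole proof.
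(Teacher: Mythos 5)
The paper does not prove this statement: it is quoted verbatim as a background theorem and attributed to Eskin--Mirzakhani--Mohammadi (with Filip's complement), with references \cite{eskin2013invariant}, \cite{eskin2015isolation}, \cite{filip2016splitting}, and no argument is given in the text. So there is nothing in the paper to compare your proposal against line by line.

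That said, your outline is an accurate high-level account of the actual proof in the literature. The split into (i) measure classification for $P$-invariant (hence $\text{SL}_2(\mathbb{R})$-invariant) ergodic measures, proved in Eskin--Mirzakhani, and (ii) the orbit-closure and equidistribution theorem proved in Eskin--Mirzakhani--Mohammadi using (i) together with the isolation/countability result, is correct. Within (i), the conditional-measure analysis along the strong unstable foliation, the Benoist--Quint/Eskin--Margulis--Mozes-style exponential drift producing extra translation invariance, the factorization/semisimplicity input for the Kontsevich--Zorich cocycle, and the quantitative recurrence estimates of Athreya and Avila--Gou\"ezel--Yoccoz are indeed the key technical ingredients; Filip's contribution is the algebraicity and quasi-projective structure of the resulting affine invariant submanifolds. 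The one caveat is that this is, necessarily, a telescoped sketch of a proof that runs to hundreds of pages, and the hardest step you flag --- carrying out the drift argument in the absence of an ambient Lie group, with only a H\"older unstable foliation and a non-conformal cocycle --- is where essentially all of the work lives; your description is faithful but cannot, at this length, substitute for the argument. Since the paper you are reading treats the theorem as a black box, that level of detail is entirely appropriate here.
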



\subsection*{Hodge bundle and Kontsevich--Zorich cocycle}
 %

Every stratum $\h(\kappa)$ admits two natural vector bundles $\hat{H}^1\h(\kappa)$ and  $H^1\h(\kappa)$ with
fibers $H^1(S,\Sigma_S,\R)$ and $H^1(S,\R)$ over a point in
$\h(\kappa)$ represented by a translation surface $S$. Both bundles
carry a natural flat connection called the \textit{Gauss--Manin
connection} uniquely determined by the property of equivariance of
lattices $H^1(S,\Sigma_S,\Z)$ and $H^1(S,\Z)$ respectively. The
\textit{Kontevich--Zorich cocycles} correspond to the monodrony of
these connections  along the $\text{SL}_2(\R)$-action. These
cocycles, which we denote by $G_s$, serve for an efficient
renormalization procedure.

The bundle $H^1\h(\kappa)$ is endowed with a natural norm $\|.\|_{h}$
called the \textit{Hodge norm}; this norm is induced from the
classical Hodge norm on $H^{1,0}(S,\C)$ by the canonical isomormhism
\begin{equation*}
H^1(S,\R) \longrightarrow H^{1,0}(S,\C)\,.
\end{equation*}
G.~Forni proved in~\cite{forni2013introduction} the following uniform
upper bound for the growth rate of the Hodge norm along the
Teichmüller flow:
\begin{theo}[Forni, \cite{forni2013introduction}]
For every $u\in H^1\h(\kappa)$ such that $\|u\|_{h}=1$ the following
bound holds:
\begin{equation*}
\frac{d\| G_s u \|_{h}}{ds}\Big\vert_{s=0} \le 1\,.
\end{equation*}
\end{theo}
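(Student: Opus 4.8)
The plan is to reduce the bound to Forni's variational formula for the Hodge norm together with a single application of Cauchy--Schwarz. First I would recall the analytic description of the Hodge norm. For a unit-area translation surface $S$ the Hodge decomposition $H^1(S,\C)=H^{1,0}(S)\oplus\overline{H^{1,0}(S)}$ restricts to a real-linear isomorphism $H^1(S,\R)\cong H^{1,0}(S)$: every real class $u$ is written uniquely as $u=h_\omega(u)+\overline{h_\omega(u)}$ with $h_\omega(u)$ holomorphic, and $\|u\|_h^2$ equals, up to a fixed normalization, the $L^2$-quantity $\tfrac{i}{2}\int_S h_\omega(u)\wedge\overline{h_\omega(u)}$. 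The essential point is that along the Teichmüller orbit $S_s=g_s\cdot S$ the Gauss--Manin connection keeps the cohomology class $G_s u=u$ fixed; what moves is the complex structure of $S_s$ --- equivalently the one-form $\omega_s=\cosh(s)\,\omega+\sinh(s)\,\overline\omega$ and hence the holomorphic representative $h_{\omega_s}(u)\in H^{1,0}(S_s)$ --- and with it the Hodge norm.

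Second, I would establish Forni's variational formula. Differentiating at $s=0$ the two conditions that characterize $h_{\omega_s}(u)$ (it is holomorphic for the complex structure of $S_s$, and its class plus the conjugate class equals the fixed $u$), one identifies the derivative in terms of the symmetric bilinear form
\[
B_\omega(\alpha,\beta)\;=\;\int_S\frac{\alpha}{\omega}\cdot\frac{\beta}{\omega}\;d\mu_\omega\,,\qquad d\mu_\omega=\tfrac{i}{2}\,\omega\wedge\overline\omega\,,
\]
on $H^{1,0}(S)$, where the meromorphic ratios $\alpha/\omega,\beta/\omega$ lie in $L^2(d\mu_\omega)$ (the only poles are at the zeros of $\omega$, and there the ratios remain square-integrable). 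The outcome is the identity
\[
\frac{d}{ds}\,\bigl\|G_s u\bigr\|_h^2\Big|_{s=0}\;=\;-\,2\,\mathrm{Re}\,B_\omega\bigl(h_\omega(u),h_\omega(u)\bigr)\,,
\]
with the normalization of $B_\omega$ matched to that of $\|\cdot\|_h$.

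Third, I would bound the form. Since $\|\alpha\|_h^2=\int_S|\alpha/\omega|^2\,d\mu_\omega$ and $d\mu_\omega$ is a probability measure, Cauchy--Schwarz gives $|B_\omega(\alpha,\beta)|\le\|\alpha\|_h\,\|\beta\|_h$, i.e. the operator norm of $B_\omega$ is at most $1$. Substituting $\alpha=\beta=h_\omega(u)$ into the variational formula yields $\bigl|\tfrac{d}{ds}\|G_s u\|_h^2|_{s=0}\bigr|\le 2\|u\|_h^2$; writing $\tfrac{d}{ds}\|G_s u\|_h^2=2\|G_s u\|_h\,\tfrac{d}{ds}\|G_s u\|_h$ and using $\|u\|_h=1$ at $s=0$ gives $\tfrac{d}{ds}\|G_s u\|_h|_{s=0}\le 1$, which is the claim. (Running the argument at an arbitrary time $s$ gives the uniform estimate $\bigl|\tfrac{d}{ds}\log\|G_s u\|_h\bigr|\le 1$, whence $e^{-|s|}\le\|G_s u\|_h\le e^{|s|}$.)

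The main obstacle is the middle step: honestly deriving the variational formula requires controlling how the $L^2$-orthogonal projection onto holomorphic one-forms --- equivalently the complex structure and the Hodge star on harmonic representatives --- varies along the Teichmüller orbit, and recognizing the derivative as Forni's form $B_\omega$. This is a Kodaira--Spencer-type computation, best carried out in the flat coordinates of the translation structure where $\omega$ has a particularly simple shape; a secondary technical point is to justify differentiating under the integral sign and the integrability of the ratios $\alpha/\omega$ near the zeros of $\omega$. Once the formula is in hand, the estimate $\|B_\omega\|\le 1$ and the conclusion are immediate.
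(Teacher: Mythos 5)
The paper does not prove this statement: it quotes it as a theorem of Forni and cites \cite{forni2013introduction}, so there is no in-paper argument to compare against. Your sketch faithfully reproduces Forni's own proof --- the first variation of the Hodge norm along the Teichm\"uller geodesic flow is expressed through the bilinear form $B_\omega(\alpha,\beta)=\int_S(\alpha/\omega)(\beta/\omega)\,d\mu_\omega$ on $H^{1,0}(S)$, and the estimate $\lvert B_\omega\rvert\le 1$ follows from Cauchy--Schwarz since $d\mu_\omega$ has unit total mass. You also correctly locate the real work in the middle step: establishing the variational identity $\frac{d}{ds}\|G_s u\|_h^2\big|_{s=0}=-2\,\mathrm{Re}\,B_\omega(h_\omega(u),h_\omega(u))$ requires tracking how the holomorphic (equivalently harmonic) representative of a fixed cohomology class changes as the complex structure is deformed by $g_s$; this is precisely Forni's Kodaira--Spencer-type computation, and you are right that it is the only nontrivial ingredient. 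Two small caveats worth tightening if you were to write this out in full: the normalization of $B_\omega$ relative to $\|\cdot\|_h$ must be fixed consistently (you flag this but do not pin it down), and one should check the sign convention so that the claimed inequality $\frac{d}{ds}\|G_s u\|_h\big|_{s=0}\le 1$ (and indeed the two-sided bound $\lvert\frac{d}{ds}\log\|G_s u\|_h\rvert\le 1$ you note in parentheses) comes out with the correct orientation of the Teichm\"uller flow. Modulo these bookkeeping points, the plan is correct and is the standard proof.
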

This bound implies, in particular,
integrability of the Kontsevich--Zorich  cocycle for any affine
invariant probability measure on $\hc(\kappa)$.

\subsection*{Oseledets and Birkhoff ergodic theorems}
A.~Eskin and J.~Chaika have proved in~\cite{chaika2015every}
the following
important versions of the Birkhoff and
Oseldets ergodic theorems for the measure $\mu_S$ and the
Kontsevich--Zorich cocycle. These theorems are particularly efficient
in the study of translations surfaces (see
also the survey of J.~Chaika~\cite{chaika2015every}).
\begin{theo}[Chaika--Eskin]
\label{CE1}
For any translation surface $S$
in any stratum $\hc(\kappa)$,
for almost any angle $\theta$ and for any
continuous function $\phi$ with compact support in $\hc_S$
the following equality holds:
\begin{eqnarray}
\lim_{T\rightarrow +\infty} \frac{1}{T}\int_0^{T} \phi(g_s r_\theta S)ds
= \int_{\hc(S)} \phi ~d\mu_S\,.
\end{eqnarray}
\end{theo}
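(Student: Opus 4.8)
The plan is to follow the strategy of Chaika and Eskin~\cite{chaika2015every}, deducing this pointwise ergodic statement from the measure classification and orbit–equidistribution theorems of Eskin--Mirzakhani--Mohammadi together with the Minsky--Weiss quantitative non-divergence estimates for the Teichmüller geodesic flow. The starting point is the ``expanding circle'' equidistribution that accompanies the Magic Wand theorem: for \emph{every} $S$ one has $\frac{1}{2\pi}\int_0^{2\pi}\phi(g_s r_\theta S)\,d\theta \to \int_{\hc_S}\phi\,d\mu_S$ as $s\to+\infty$, and moreover every probability measure on $\hc(\kappa)$ invariant under the upper triangular subgroup $P\subset\mathrm{SL}_2(\R)$ is $\mathrm{SL}_2(\R)$-invariant, hence an affine measure $\mu_{S'}$ for some affine invariant submanifold. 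These are exactly the ``homogeneous-like'' inputs that the statement of the Eskin--Mirzakhani--Mohammadi theorem recalled above encodes, and the whole point is to upgrade the averaged-over-$\theta$ version into a statement about the time average in a single, generic direction.

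First I would use non-divergence (uniform on compact subsets of a stratum) to control escape of mass: for every $S$ and almost every $\theta$ the family of empirical measures $\mu_{T,\theta}:=\frac{1}{T}\int_0^{T}\delta_{g_s r_\theta S}\,ds$ is tight. Since $\hc_S$ is closed and $\mathrm{SL}_2(\R)$-invariant, every weak-$*$ limit point $\nu$ of $\mu_{T,\theta}$ is a probability measure supported on $\hc_S$, and, being a Cesàro average along the orbit of $g_s$, it is automatically $g_s$-invariant. It then remains to pin down $\nu$.

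The core of the argument is to show that for almost every $\theta$ every such limit $\nu$ is in addition invariant under the unstable horocycle flow $u_r$. The mechanism is the conjugation identity $g_s r_\epsilon g_{-s}=\id+\epsilon\left(\begin{smallmatrix}0&-e^{2s}\\ e^{-2s}&0\end{smallmatrix}\right)+O(\epsilon^2)$, which shows that, up to an error controlled by the non-divergence estimates, averaging $g_s r_\theta S$ over an arc of directions of length of order $e^{-2s}$ around a fixed $\theta_0$ is the same as pushing $g_s r_{\theta_0}S$ along a bounded segment of its unstable horocycle orbit. Feeding the circle equidistribution into this comparison along a sequence of scales $s_n\to\infty$, and running a Borel--Cantelli and Fubini argument over a countable dense family of test functions and of rational times, one produces a full-measure set of angles $\theta$ along which every limit of $\mu_{T,\theta}$ is simultaneously $g_s$- and $u_r$-invariant, hence $P$-invariant, hence $\mathrm{SL}_2(\R)$-invariant and affine. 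A last application of non-divergence rules out concentration of mass on proper affine submanifolds (which are $\mu_S$-null), forcing $\nu=\mu_S$; thus $\mu_{T,\theta}\to\mu_S$ and the stated equality holds for every continuous $\phi$ with compact support in $\hc_S$.

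I expect the main obstacle to be precisely this passage from an average over directions to the convergence of the time average in one generic direction: it rests on a delicate interplay of the exponential expansion of the unstable horocycle under $g_s$ (used to trade an angular average for a horocyclic one), uniform non-divergence (used both for tightness and to bound the $O(\epsilon^2)$ errors near the thin part, where the circle equidistribution degenerates), and the measure classification (used to identify the limit once $P$-invariance is established). A secondary point is bookkeeping: everything must be tracked inside the orbit closure $\hc_S$ rather than the full stratum, which is automatic from closedness and $\mathrm{SL}_2(\R)$-invariance but still has to be checked when passing to weak-$*$ limits. Finally, the Oseledets version of this theorem, which is what gets used in the proof of Theorem~\ref{SB}, follows from the same scheme: once Forni's Hodge-norm bound recalled above guarantees integrability of the Kontsevich--Zorich cocycle, one applies the multiplicative ergodic theorem along the $\mu_S$-generic geodesic furnished by the Birkhoff statement just obtained.
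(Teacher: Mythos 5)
This theorem (labelled CE1) is quoted in the paper as a result of Chaika and Eskin~\cite{chaika2015every} and is \emph{not} proved there: the paper explicitly attributes it, uses it as a black box, and offers no argument. There is therefore no ``paper's own proof'' to compare against.

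That said, your sketch is a faithful high-level account of the actual Chaika--Eskin strategy: starting from circle (sector) equidistribution which accompanies the Eskin--Mirzakhani--Mohammadi theorems, using quantitative non-divergence to get tightness of the empirical measures $\mu_{T,\theta}$, exploiting the conjugation $g_s r_\epsilon g_{-s}=\id+\epsilon\left(\begin{smallmatrix}0&-e^{2s}\\ e^{-2s}&0\end{smallmatrix}\right)+O(\epsilon^2)$ to convert angular averages at scale $e^{-2s}$ into unstable-horocycle averages, and then invoking the classification of $P$-invariant measures to identify every weak-$*$ limit with $\mu_S$. You correctly identify the crux as the passage from the average over $\theta$ to a single generic $\theta$, and your last paragraph correctly notes that the Oseledets version (CE2) piggybacks on this Birkhoff version once integrability of the Kontsevich--Zorich cocycle is granted by Forni's bound. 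The parts you compress — the precise Borel--Cantelli and Fubini bookkeeping over countable dense families of test functions, times, and scales, and the quantitative non-divergence estimates that control both tightness and the error near the thin part — are genuinely the technical heart of the Chaika--Eskin paper and cannot be reconstructed at this level of detail; but since the present paper only cites the result, the appropriate assessment is that your outline correctly describes the published argument rather than that it replaces it.
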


\begin{theo}[Chaika--Eskin]
\label{CE2}
For any translation surface $S$ in any stratum $\hc(\kappa)$, and for
almost any angle $\theta$, the translation surface $r_\theta S$ is
Oseledets-generic for the Kontsevich--Zorich cocycle along the
Teichmüller flow with respect to the measure $\mu_S$.
\end{theo}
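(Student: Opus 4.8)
The plan is to deduce the statement from the Oseledets multiplicative ergodic theorem applied on the orbit closure, the real work being to upgrade a ``$\mu_S$-almost every point'' conclusion to an ``almost every direction through the fixed surface $S$'' conclusion. Write $\hc_S=\overline{\text{SL}_2(\R)\cdot S}$ with its Eskin--Mirzakhani--Mohammadi measure $\mu_S$, which is $\text{SL}_2(\R)$-invariant, ergodic for $g_s$ and affine in period coordinates, and let $G_s$ be the Kontsevich--Zorich cocycle on the Hodge bundle $H^1\h(\kappa)$ over $\hc_S$. Forni's inequality gives a uniform surface-independent bound $\|G_s\|_{\mathrm{op}}\le e^{|s|}$, so $G_s$ and all its exterior powers $\wedge^k G_s$ are log-integrable for $\mu_S$; the Oseledets theorem then yields a Lyapunov spectrum $\lambda_1>\dots>\lambda_m$ with multiplicities, a measurable $g_s$-equivariant splitting $H^1(S_x,\R)=\bigoplus_i E_i(x)$ defined $\mu_S$-a.e., and a $\mu_S$-conull set $\mathcal G$ of Oseledets-generic points, on which $\tfrac1s\log\|G_s(x)v\|_h\to\lambda_i$ for every $v\in E_i(x)\setminus\{0\}$ as $s\to\pm\infty$. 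We must show $r_\theta S\in\mathcal G$ for Lebesgue-a.e.\ $\theta$. This does not follow from Fubini, since the circle $\{r_\theta S\}_\theta$ is $\mu_S$-null as soon as $\dim\hc_S>2$.

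\emph{The method.} Theorem~\ref{CE1} gives, for a.e.\ $\theta$, equidistribution of $(g_s r_\theta S)_{s\ge 0}$ towards $\mu_S$; applying it also to $r_{\theta+\pi/2}S$ and using the identity $r_{-\pi/2}g_s r_{\pi/2}=g_{-s}$ (so that $g_s(r_{\pi/2}r_\theta S)=r_{\pi/2}(g_{-s}r_\theta S)$, and $r_{\pi/2}$ preserves $\mu_S$) we also get equidistribution of the backward trajectory, so for a.e.\ $\theta$ the whole bi-infinite trajectory through $r_\theta S$ equidistributes. Now fix $\varepsilon>0$ and, by Lusin's theorem, choose a compact $K_\varepsilon\subset\hc_S$ with $\mu_S(K_\varepsilon)>1-\varepsilon$ on which the splitting $x\mapsto(E_i(x))_i$ and the finite-time quantities $x\mapsto\tfrac1T\log\|G_T(x)|_{E_i(x)}\|$ are continuous and converge uniformly as $T\to\infty$ (a Katok-type uniformity statement; for the exponents below the top one first passes to $\wedge^k G_s$ and repeats the construction). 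Feeding into Theorem~\ref{CE1} a continuous function squeezed between $\mathbf 1_{K_\varepsilon}$ and the indicator of a small neighbourhood of $K_\varepsilon$, we get that for a.e.\ $\theta$ the trajectory $g_s r_\theta S$ spends asymptotic density at least $1-2\varepsilon$ of its time in that neighbourhood. Tracking the continuously extended subbundles $E_i$ along the good times, estimating them there by the uniform bounds, and bounding the contribution of the bad times --- of density $\le 2\varepsilon$, over which $\|G_s\|_{\mathrm{op}}\le e^{|s|}$ adds at most $2\varepsilon$ to the exponent --- one extracts, letting $\varepsilon\to0$, well-defined subspaces $E_i(r_\theta S)\subset H^1(r_\theta S,\R)$ that realise exactly the exponents $\lambda_i$; combined with its backward counterpart this is precisely the assertion $r_\theta S\in\mathcal G$.

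\emph{The main obstacle.} The essential difficulty is that this is a statement about a \emph{matrix} cocycle and does not follow formally from the scalar conclusion of Theorem~\ref{CE1}: the growth of $\|G_s(x)v\|_h$ is not a Birkhoff average of an instantaneous rate, so one must control the \emph{directions} of the iterates $G_s(x)v$, not just their norms. Making the method above rigorous requires two inputs beyond Theorem~\ref{CE1}. First, during excursions away from the compact sets $K_\varepsilon$ the crude Forni bound must not overwhelm the good contribution, so one needs control of the \emph{length} of such excursions, not merely of their asymptotic density; this comes from the quantitative recurrence estimates of Eskin--Mirzakhani--Mohammadi (a Margulis-type function for the $\text{SL}_2(\R)$-action giving exponential recurrence to compact sets). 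Second, one must show that the forward-slow filtration $V_j(r_\theta S)=\{v:\limsup_s\tfrac1s\log\|G_s(r_\theta S)v\|_h\le\lambda_j\}$ and its backward analogue $W_j(r_\theta S)$ are in general position for a.e.\ $\theta$, so that $E_j=V_j\cap W_j$ has the correct dimension and the $\limsup$'s become genuine limits; this transversality is automatic $\mu_S$-a.e.\ from Oseledets, but on the null circle it has to be extracted from the equidistribution together with ergodicity of $\mu_S$ and the fact that, $\mu_S$-a.e., the measurable Oseledets flag is transverse to the data frozen along the $\text{SO}_2(\R)$-orbit through a surface. I expect this transversality --- equivalently, ruling out that the one-sided averages $\tfrac1s\log\|G_s(r_\theta S)v\|_h$ oscillate between distinct $\lambda_j$'s --- to be the crux. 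Once these are in place, a diagonal argument over $\varepsilon\to0$ and over the finitely many exterior powers $\wedge^k G_s$ needed to detect all the $\lambda_i$ finishes the proof.
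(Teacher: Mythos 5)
This statement is a background theorem, cited from Chaika--Eskin \cite{chaika2015every}, that the paper states as a black box and never proves. There is therefore no ``paper's own proof'' to compare your attempt against; what I can do is assess your sketch on its own and against what is actually done in Chaika--Eskin's paper.

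Your proposal is an honest sketch rather than a proof, and the honesty is precisely where the gaps are. The route you propose --- start from the Birkhoff-genericity statement (Theorem~\ref{CE1}), use Lusin/Egorov to produce compact ``good'' sets with uniform Oseledets control, bound the excursions by Forni's operator-norm estimate together with a Margulis-function recurrence estimate, and then argue transversality of the forward and backward filtrations to extract the full Oseledets flag on the null circle $\{r_\theta S\}$ --- is plausible in outline, and you correctly locate the two genuine obstructions: (i) matrix-cocycle growth is not a Birkhoff average, so the directions of $G_s(r_\theta S)v$ must be tracked, not just norms; and (ii) the forward and backward slow filtrations $V_j(r_\theta S)$, $W_j(r_\theta S)$ must be shown to be in general position, which is automatic $\mu_S$-a.e.\ but not on the $\mu_S$-null circle. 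But you declare (ii) ``the crux'' and then leave it at the level of ``I expect this to be extracted from ergodicity and equidistribution,'' which is not an argument. There is also a more mundane difficulty with the Lusin/Egorov step: you need the subbundles $E_i(\cdot)$ to extend continuously to a neighbourhood of $K_\varepsilon$ in a way compatible with the cocycle, and tracking a nearby continuous extension of a merely measurable splitting through excursions is exactly where such arguments typically break; asserting it without a mechanism (e.g.\ a contraction/alignment estimate on the bad excursions) is not enough.

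For comparison, Chaika--Eskin do not prove their Oseledets result by formally upgrading the Birkhoff result. Both theorems in their paper are established from the same underlying machine --- approximation of the Teichm\"uller geodesic by a random walk on $\text{SL}_2(\R)$, Margulis-function recurrence estimates in the spirit of Eskin--Mirzakhani--Mohammadi, and the Benoist--Quint-type analysis of random matrix products --- and the Oseledets version requires substantial additional work directly on the matrix cocycle (large-deviation estimates for the random walk and a martingale argument for the alignment of directions). Your sketch, which treats Theorem~\ref{CE1} as the sole input and tries to close the gap by compactness tricks, is a genuinely different and substantially weaker approach; it stops exactly at the step that makes the Chaika--Eskin theorem hard. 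If your interest is in using the result (as this paper does), citing it is appropriate; if your interest is in proving it, you would need to engage with the random-walk/Margulis-function apparatus rather than attempt a soft deduction from Birkhoff genericity.
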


\begin{rem}[cocycles with multiple components]
\label{rem_cocycle}
Recall that a cohomology class $f\in H^1(S,\Z)\otimes_\Z \Z^d$
associated to a $\Z^d$-lattice cover has $d$ components
$f=(f_i)_{1\le i\le d}$ and takes values in $\Z^d$. In what follows
the Hodge-norm of such a cohomology class is defined as the sup-Hodge
norm of components $\|f\|_h:=\max_i \|f_i\|_h$. Thus, the Lyaponov
exponent of $f$ with respect to the Kontsevich--Zorich cocycle 
the along the Teichmüller flow is the maximum of the Lyapunov
exponents of the components $\Lambda(f) = \max_i \Lambda(f_i)$.
\end{rem}
\subsection*{Zippered rectangles construction}
\label{paragraph_zippered}
Veech's zippered rectangles construction is a particularly convenient
way to represent a vertical flow on a compact translation surface,
see \cite{veech1978interval} for the
original definition or~\cite{yoccoz2007interval} for a survey. We
recall briefly an idea of the construction. Consider a horizontal
segment $I$ on a translation surface $S$. We always assume that $I$
does not have conical singularities inside it (a conical singularity
at one of the extremities of $I$ is allowed and sometimes even
assumed). The first return map $T: I\to I$ of the vertical flow on
$S$ to $I$ is an \textit{interval exchange transformation}: it chops
the interval $I$ into several subintervals and rearranges them in a
different order, preserving the orientation and avoiding overlaps (see
the surveys~\cite{yoccoz2007interval} and~\cite{forni2013introduction},
\cite{viana2008dynamics} for details on interval exchange
transformations). More formally, we have two decompositions of $I$: $I^t=\sqcup_\alpha I_\alpha^t,~~~~I^b=\sqcup_\alpha I_\alpha^b$ indexed by a set $\mathcal{A}$; here $I\backslash I^{\cdot}$ is a finite set of singularities. The map $T$ sends $I_\alpha^t$ to $I_\alpha^b$ by a translation; we generally denote $\lambda_\alpha$ the length of $I_\alpha^{\cdot}$. We denote $I^*$ the set of points with an infinite future orbit; the map $T: I^*\to I^*$ is well defined. 
  %
  %
  %
The return time of the vertical flow is piecewise constant on $I$ and
constant on each of subintervals $I_\alpha^t$, we denote it $\tau_\alpha$. Thus, a choice of the horizontal
subinterval $I$ as above defines a decomposition of the translation
surface into a collection of rectangles --- flow boxes $I_\alpha^t\times [0,\tau_\alpha[$ of the
vertical flow. This decomposition changes, of course, when we change the
base interval $I$. We will mostly assume that $I$ has unit length
and that the translation surface $S$ has unit area.
  %

\subsection*{Coding the vertical geodesic flow}
Let $S$ be a translation surface, $I$ be a horizontal segment
of length one
on $S$ as above and $T:I\to I$ the first return map of the vertical flow
to $I$. We assume that the left endpoint $x_0$ of $I$ is nonsingular.
By $S'$ we denote a pair $S'=(S,x_0)$.
For any positive integer $n$ and $x\in I$
we construct $n$-th return cycle
$C_\n(S',x)$
as follows. Consider an oriented segment $\gamma_\n(x)$
of the trajectory
of the vertical flow starting from
$x$ and going up to $T^\n(x)$. Completing the resulting path
to a closed path by a subsegment of $I$ starting from $T^\n(x)$
and ending at $x$ we get a cycle
\begin{equation*}
     C_\n(S',x)\in H_1(S,\Z),
\end{equation*}
with $S'=(S,x_0)$ (more precisely, the cycles can be lifted to the homology of $S\backslash \Sigma_{S'}$ with $\Sigma_S$ the set of singularities).  This is an additive cocycle for the interval exchange map $T$, it satisfies
\begin{equation*}
     C_{\n+1}(S',x)=C_\n(S',x)+C_1(S',T^\n x).
\end{equation*}
We also denote $C_\n(S')$ the return cycle of the trajectory starting
from $x_0$, it satisfies
\begin{equation*}
     C_\n(S')=\underset{x\rightarrow x_0}{\lim}C_\n(S',x).
\end{equation*}
Note that for all interior points $x\in I_\alpha^{t}$ the first return cycles $C_1(S',x)$ are homologous. We
denote by $h_\alpha$ the resulting first return cycles. When the
vertical flow is minimal, these cycles form a basis of  $
H_1(S\backslash \Sigma_{S'},\Z)$.


\section{Large excursions}
\label{s:Large:Excursions}

In this section, we prove the main theorems of the paper. We first show briefly how we can reduce the problem from lattice coverings of translation surfaces to compact translation surfaces. This was previously discussed in the paper \cite{delecroix2014diffusion}. In this same section, we rephrase the theorem \ref{SB} by using the formalism of zippered rectangle construction \ref{paragraph_zippered}. In a second, by using Veech zippered rectangles construction, we choose a convenient cross section for the Teichmüller flow and then consider the first return map on this section. To define the cross section, we use the lemma \ref{ie} which is given in the appendix. In the third section, by using the normalization procedure, we produce a sequence of trajectories with a good diffusion rate, and we show that these excursions occur at exponential times. In fourth, from the results of A. Zorich \cite{zorich1997deviation}, we obtain an upper bound for the diffusion. We use this upper bound to control the trajectories around a large excurssion and also to state the upper bound in theorem \ref{SB}. Finally, we prove the lower bound, which is the most tricky part. By using the excursions, we can show that the trajectory goes far from the initial point; with the upper bound, we can control the diffusion near these excursions and show that a typical trajectory spends a large part of its time far from the starting point. 

\subsection*{Preliminary results on coverings}
\label{paragraph_coverings_estimate}
Let $\pi : \tilde{S}\rightarrow S$ be a lattice covering of a translation surface $S$ with cocycle $f$. The flat structure defines a metric on $\tilde{S}$, and we denote $d_{\tilde{S}}$ the flat distance on $\tilde{S}$. Let $x,y$ be two points in the same fiber of $\pi$. Using the $\Z^d$ action, there is a unique  $\mathbf{n}\in \Z^d$ such that $y=\mathbf{n}\cdot x$. The lemma below allows us to compare the geodesic distance between $x,y$ and the norm of $\mathbf{n}$ (for instance, $\|\mathbf{n}\|=\sum_i |\mathbf{n}_i|$).

\begin{lem}
\label{lem_distance_fiber}
For all lattice covering $\tilde{S}\to S$, with a choice of cocycle $f$. There exists a constant $A_0>0$ such that: for every $\tilde{x}\in \tilde{S}$ and $\mathbf{n}\in \Z^d$, we have 
\begin{equation*}
A_0^{-1}\|\mathbf{n}\| \le d_{\tilde{S}}(x,\mathbf{n}\cdot x) \le A_0\|\mathbf{n}\|.
\end{equation*}
\end{lem}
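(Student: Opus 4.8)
The plan is to establish the two inequalities by separate arguments: the upper bound by writing down an explicit short path, and the lower bound by a compactness argument on the base surface $S$. Throughout, write $x$ for the point denoted $\tilde x$ in the statement, set $D:=\mathrm{diam}(S)<\infty$ (the flat diameter of the compact base $S$), and recall that the $\Z^d$-action on $\tilde S$ is by isometries and that a loop $\mu$ on $S$ lifts to a path from any $x\in\pi^{-1}(\mu(0))$ to $\langle f,[\mu]\rangle\cdot x$, so that $\langle f,\cdot\rangle\colon H_1(S,\Z)\to\Z^d$ records the monodromy.

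For the upper bound, fix $\tilde z_0\in\tilde S$ and, for each standard generator $e_i$ of $\Z^d$, a path in $\tilde S$ from $\tilde z_0$ to $e_i\cdot\tilde z_0$ (possible since $\tilde S$ is connected); its projection is a loop $\gamma_i$ on $S$ with $\langle f,[\gamma_i]\rangle=e_i$, and we put $L:=\max_i\mathrm{length}(\gamma_i)$. Given $\mathbf{n}=\sum_i n_i e_i$, concatenating $|n_i|$ copies of $\gamma_i^{\pm1}$ produces a loop $\gamma_{\mathbf{n}}$ on $S$ with $\langle f,[\gamma_{\mathbf{n}}]\rangle=\mathbf{n}$ and $\mathrm{length}(\gamma_{\mathbf{n}})\le L\|\mathbf{n}\|$. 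For arbitrary $x$, join $\pi(x)$ to $\pi(\tilde z_0)$ by a path of length $\le D$ and lift it from $x$; then lift $\gamma_{\mathbf{n}}$ from the endpoint; then lift the reversed first path. By equivariance of the $\Z^d$-action these three lifts concatenate to a path from $x$ to $\mathbf{n}\cdot x$ of length $\le 2D+L\|\mathbf{n}\|$, so $d_{\tilde S}(x,\mathbf{n}\cdot x)\le(2D+L)\|\mathbf{n}\|$ when $\mathbf{n}\neq 0$, and $=0$ otherwise.

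The lower bound is the substantive part. A path in $\tilde S$ from $x$ to $\mathbf{n}\cdot x$ of length $\ell$ projects to a loop $\mu$ on $S$ of the same length with $\langle f,[\mu]\rangle=\mathbf{n}$, so it suffices to find $C>0$ with $\|\langle f,[\mu]\rangle\|\le C\,\mathrm{length}(\mu)$ for every loop $\mu$ on $S$. By compactness there is $\epsilon_0>0$ such that every loop of $S$ contained in a flat ball of radius $2\epsilon_0$ is contractible in $S$ --- here one uses the local structure of the flat metric: a small ball around a regular point is a Euclidean disk, and a small ball around a conical singularity is a finite cone over a circle, hence contractible. Fix a finite $\tfrac{\epsilon_0}{2}$-net $\{y_1,\dots,y_M\}$ of $S$ and, for each $m$, a path $\rho^{(m)}$ from a base point $x_0$ to $y_m$. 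For each pair $(m,m')$ there is at most one homotopy class of paths $y_m\to y_{m'}$ realizable inside a flat ball of radius $2\epsilon_0$ (two such differ by a loop lying in that ball, hence are homotopic rel endpoints), so the finitely many classes $[\,\rho^{(m)}\ast\beta\ast\overline{\rho^{(m')}}\,]\in H_1(S,\Z)$ that thus arise form a finite set $\mathcal{F}$; put $C:=\tfrac{2}{\epsilon_0}\max_{v\in\mathcal{F}}\|\langle f,v\rangle\|$. Now given a loop $\mu$ of length $\ell$: if $\ell<\epsilon_0$ then $\mu$ is contractible and $\langle f,[\mu]\rangle=0$; otherwise subdivide $\mu$ into $k\le 2\ell/\epsilon_0$ subarcs of length $<\epsilon_0$, route each subarc's endpoints to the net by short paths, and telescope to write $[\mu]=\sum_{i=1}^{k}v_i$ with every $v_i\in\mathcal{F}$, whence $\|\langle f,[\mu]\rangle\|\le k\max_{v\in\mathcal{F}}\|\langle f,v\rangle\|\le C\ell$. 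Taking $A_0:=\max\{2D+L,\ C\}$ completes the proof. (Alternatively, the lower bound is an instance of the Milnor--Švarc lemma, since $\Z^d$ acts properly, cocompactly and isometrically on the proper geodesic space $\tilde S$; the additive error in the resulting quasi-isometry is removed using that a loop representing a nonzero class $\mathbf{n}$ is non-contractible, hence has length at least the systole of $S$, which is positive by compactness.)

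The main obstacle is the lower bound, and precisely the finiteness of the set $\mathcal{F}$ of homotopy classes of short loops: this is exactly where compactness of $S$ and the tameness of the flat metric near its conical singularities enter. Everything else --- the upper bound and the telescoping reduction --- is an explicit construction of short paths.
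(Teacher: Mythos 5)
Your proof is correct. The paper states Lemma~\ref{lem_distance_fiber} without giving a proof, so there is nothing to compare against; your argument (explicit path concatenation for the upper bound, a compactness/systole bound---equivalently Milnor--\v{S}varc---for the lower bound, both using connectedness of $\tilde S$, i.e.\ surjectivity of $f$) correctly supplies the missing proof, modulo harmless bookkeeping of constants such as $k\le 2\ell/\epsilon_0+1$ rather than $k\le 2\ell/\epsilon_0$.
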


Using this lemma, we can approach the deviation of the geodesic flow on $\tilde{S}$. To do this, we use Veech zippered rectangle construction for the surface $S$. Let $\tilde{x}\in \tilde{S}$, we denote $\tilde{\phi}_t(\tilde{x})$ the geodesic flow on $\tilde{S}$ in the vertical direction. Let $x=\pi(\tilde{x})$ be the projection on $S$. We assume that $x$ has an infinite future orbit, and $S$ is constructible by zippered rectangles with the interval $I$ of length $1$ starting from $x$ and going to the east. As before, we denote $S'=(S,x)$ and $\phi_t(x)$ the vertical flow on $S$. Let $(t_{\n})_{\n\ge 0}$ be the sequence of return times of $(\phi_t(x))_{t\ge 0}$, on the interval $I$, and $C_{\n}(S')$ be the $\n-$th return cycle of the trajectory starting from $x$. 

\begin{lem}
\label{lem_covering_gathering}
Under these assumptions, there exist $A_1,B_1$ such that, for all $\n>0$ 
\begin{equation*}
\frac{A_1^{-1}}{\n} \underset{k\le \n}{\sum}\|\langle f , C_{k} \rangle\| -B_1\le \frac{1}{T}\int_0^T d(\tilde{x},\tilde{\phi}_{t}(\tilde{x}))dt \le  \frac{A_1}{\n} \underset{k\le \n}{\sum} \|\langle f , C_{k} \rangle\| +B_1, ~~~\forall T\in [t_\n,t_{\n+1}].
\end{equation*}
\end{lem}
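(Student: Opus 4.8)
The plan is to compare the integral $\int_0^T d(\tilde x,\tilde\phi_t(\tilde x))\,dt$ with the discrete sum $\sum_{k\le\n}\|\langle f,C_k\rangle\|$ by controlling both (a) the position of the lifted trajectory at the return times $t_k$, and (b) the drift accumulated between consecutive return times. For step (a), observe that the point $\tilde\phi_{t_k}(\tilde x)$ lies in the same $\pi$-fiber as a lift of $x$, and the monodromy element taking $\tilde x$ to $\tilde\phi_{t_k}(\tilde x)$ is exactly $\langle f,C_k(S')\rangle\in\Z^d$ — this is the defining property of the cocycle recalled in the section on lattice coverings and on coding the vertical flow. Applying Lemma~\ref{lem_distance_fiber} with the constant $A_0$, we get
\begin{equation*}
A_0^{-1}\|\langle f,C_k\rangle\| \le d_{\tilde S}\big(\tilde x,\tilde\phi_{t_k}(\tilde x)\big) \le A_0\|\langle f,C_k\rangle\|.
\end{equation*}
Here I am being slightly sloppy: $\tilde\phi_{t_k}(\tilde x)$ need not be a $\Z^d$-translate of $\tilde x$ itself but of the chosen lift over $x_0$; the discrepancy is bounded by the diameter (in $\tilde S$) of a fundamental domain, which is a fixed constant and gets absorbed into $B_1$.

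For step (b), I would bound the excursion of $d_{\tilde S}(\tilde x,\tilde\phi_t(\tilde x))$ on each interval $t\in[t_k,t_{k+1}]$ in terms of its values at the endpoints. Since $\tilde\phi_t$ moves at unit speed, $|d_{\tilde S}(\tilde x,\tilde\phi_t(\tilde x)) - d_{\tilde S}(\tilde x,\tilde\phi_{t_k}(\tilde x))| \le t - t_k \le \tau_{\max}$, where $\tau_{\max}=\max_\alpha\tau_\alpha$ is the largest return time of the zippered rectangle decomposition of $S$ — a finite constant depending only on $S$ and $I$. Hence on each step the integrand stays within an additive constant of $d_{\tilde S}(\tilde x,\tilde\phi_{t_k}(\tilde x))$, and similarly the step lengths $t_{k+1}-t_k$ are bounded between $\tau_{\min}$ and $\tau_{\max}$. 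Summing over $k\le\n$ and using step (a):
\begin{equation*}
\int_0^{t_\n} d(\tilde x,\tilde\phi_t(\tilde x))\,dt \le \sum_{k\le\n}\tau_{\max}\big(A_0\|\langle f,C_k\rangle\| + c\big) + (\text{boundary terms for } [t_\n,T]),
\end{equation*}
and the analogous lower bound with $\tau_{\min}$ and $A_0^{-1}$. Dividing by $T$ and using $\n\,\tau_{\min}\le t_\n\le T\le t_{\n+1}\le(\n+1)\tau_{\max}$ to convert $\frac1T$ into $\frac{c'}{\n}$, one obtains the claimed two-sided estimate with $A_1$ a suitable combination of $A_0,\tau_{\min},\tau_{\max}$ and $B_1$ absorbing the additive constants (the fundamental-domain diameter, the $c$ from step (b), the single boundary interval $[t_\n,T]$, and the $k=0$ term).

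The main obstacle I anticipate is keeping the bookkeeping of constants honest, in particular making sure that the finitely many "defects" — the choice of lift of the base interval, the contribution of the last incomplete excursion on $[t_\n,T]$, and the fact that $d(\tilde x,\cdot)$ measured from $\tilde x$ rather than from the canonical lift of $x_0$ differ — all collapse into a single additive constant $B_1$ uniform in $\n$ (and in $T$). The one genuinely substantive input is the identification in step (a) of the monodromy displacement with $\langle f,C_k\rangle$; everything else is a uniform-speed / bounded-return-time estimate. Note the lemma implicitly assumes the vertical flow on $S$ is minimal (so that the return times $t_k$ are well defined for all $k$ and $\tau_{\max}<\infty$), which is guaranteed for a.e.\ direction by Keane's criterion after applying $r_\theta$; this is where the "almost every $\theta$" of Theorem~\ref{SB} enters, though for the purposes of this lemma we simply take it as a standing hypothesis.
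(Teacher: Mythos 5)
Your proposal matches the paper's proof in all essentials: both identify the displacement of $\tilde\phi_{t_k}(\tilde x)$ in the cover with the monodromy element $\langle f,C_k\rangle$, invoke Lemma~\ref{lem_distance_fiber} to convert that to a geodesic-distance estimate up to an additive constant (the paper uses the length of $I$, you use the fundamental-domain diameter — either works), control the excursion on each $[t_k,t_{k+1}]$ by unit speed, and use the finiteness of the set of return times of the zippered-rectangle construction to get $B_0^{-1}\le t_{k+1}-t_k\le B_0$ and hence $T\asymp\n$. The bookkeeping you describe — summing, dividing by $T$, absorbing the last partial interval and the various additive defects into $B_1$ — is exactly the paper's computation.
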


\begin{proof}
Let $\mathbf{n}\in \Z^d$ and let $\gamma$ be any arc connecting $x$ and $\mathbf{n}\cdot x$. The vector $\langle f,\pi_*\gamma\rangle$ is then equal to $\mathbf{n}$. For any $\n\in \N$, the point $\tilde{\phi}_{t_\n}(\tilde{x})$ belongs to a unique leaf of $I$. If $\tilde{I}$ is the leaf starting from $\tilde{x}$, then $\tilde{\phi}_{t_\n}(\tilde{x})$ belongs to $\mathbf{n}\cdot \tilde{I}$, where $\mathbf{n}= \langle f, C_{\n} \rangle $. By using the triangular inequality, we can obtain the following inequality: 
\begin{equation*}
A_0^{-1}\|\langle f , C_{\n} \rangle\| -1 \le d(x,\tilde{\phi}_{t_\n}(\tilde{x})) \le A_0\|\langle f , C_{\n} \rangle\| +1,
\end{equation*}
where $1$ is the length of the segment $I$. Using triangular inequality again, we can write
\begin{equation*}
|d(x,\tilde{\phi}_{t_\n}(\tilde{x}))-d(x,\tilde{\phi}_{t}(\tilde{x}))|\le d(\tilde{\phi}_{t_\n}(\tilde{x}),\tilde{\phi}_{t}(\tilde{x})) \le |t-t_\n|.
\end{equation*}
The last inequality uses the fact that the velocity is constant and equal to one. The increments $(t_{k+1}-t_{k})_{k\le \n}$ take their values in a finite set of strictly positive numbers (because in the Veech zippered rectangle construction, the return time is constant on each sub-interval of $I$). Then there is $B_0>0$ with 
\begin{equation*}
   B_0^{-1} \le t_{k+1}-t_{k}\le B_0,~~~~\forall k\ge 0,
\end{equation*}
and this implies 
\begin{equation*}
    \n B_0^{-1}\le T\le B_0(\n+1),~~~~\forall T \in [t_\n,t_{\n+1}[.
\end{equation*}
Using these inequalities, we can estimate the integral by splitting the interval $[0,T]$ into subintervals $[t_k,t_{k+1}[$ for $k< \n$ and $[t_\n,T]$. 
We have 
\begin{eqnarray*}
 \frac{1}{T}\int_0^Td(\tilde{x},\tilde{\phi}_{t}(\tilde{x}))dt&\le& \frac{1}{T}\sum_{k< \n}\int_{t_k}^{t_{k+1}}d(\tilde{x},\tilde{\phi}_{t}(\tilde{x}))dt~~+~~ \frac{1}{T}\int_{t_\n}^{T}d(\tilde{x},\tilde{\phi}_{t}(\tilde{x}))dt\\
 &\le&\frac{1}{T}\sum_{k< \n}\int_{t_k}^{t_{k+1}}d(\tilde{x},\tilde{\phi}_{t_k}(\tilde{x}))+(t-t_k)dt~~+~~\frac{1}{T}\int_{t_\n}^{T}d(\tilde{x},\tilde{\phi}_{t}(\tilde{x}))dt\\
 &\le& \frac{1}{T}\sum_{k< \n} (t_{k+1}-t_k)(A_0 \|\langle f , C_{k} \rangle\| + (t_{k+1}-t_k))~+~t_{\n+1}-t_{\n}\\
    &\le&\frac{A_0B_0}{T} \underset{k< \n}{\sum}\|\langle f , C_{k} \rangle\|+\frac{\n B_0^2}{T}+B_0\\
    &\le& \frac{A_0B_0^2}{\n} \underset{k< \n}{\sum}\|\langle f , C_{k} \rangle\|+B_0^3+B_0
\end{eqnarray*}
And this implies the existence of two constants $A_1,B_1$ such that, for all $\n$ we have
\begin{equation*}
 \frac{1}{T}\int_0^Td(x,\tilde{\phi}_{t}(\tilde{x},\theta))dt \le \frac{A_1}{\n} \underset{k\le \n}{\sum} \|\langle f , C_{k}(S') \rangle\| +B_1, ~~~\forall T\in [t_\n,t_{\n+1}[.
\end{equation*}
Similarly, we can obtain the lower bound and prove the lemma \ref{lem_covering_gathering}.
\end{proof}

If we are under the assumption of the last lemma and if $\Lambda f>0$. To prove the theorem \ref{SB}, it's sufficient to prove the following: for all $\epsilon>0$, it exists $\n_0$ such that
\begin{equation}
\label{formule_intermediaire}
\exp((\Lambda f+1-\epsilon)\log(n)) \le \underset{k\le \n}{\sum} \|\langle f , C_{t_k} \rangle\| \le \exp((\Lambda f+1+\epsilon)\log(\n)),~~~~\forall \n\ge \n_0.
\end{equation}
We will prove this statement in the rest of the paper. The following remark in important.
\begin{rem}[Restriction to the case $d=1$]
    By using remark \ref{rem_cocycle} restrict to cocycle $f$ with values in $\Z$, if $f=(f_i)_{i=1...d}$, the Lyapunov exponent is $\max_i \Lambda f_i$ and also for the diffusion rate.  
\end{rem}

\subsection*{Cross section for the Teichmüller flow}
\label{paragraph_cross_section}
We fix a pointed surface $S'=(S,x_0)$ of area one and let $\mu_{S'},\mu_S$ be the two affine measures on the orbits $\hc(S),\hc(S')$. Up to rotation and the action of the Teichmüller flow, we can obtain a new surface $S'_1$ with no vertical saddle connection and built by zippered rectangles using a segment of length one. Let $(\mathcal{A},\pi,\lambda_0,\tau_0)$ be the data for the zippered rectangles construction. We denote $\lambda_0=(\lambda_0(\alpha))_\alpha$ the vector whose entries are the length of the intervals, and $\tau_0=(\tau_0(\alpha))_\alpha$ the height of the rectangles in the zippered rectangles construction. These data allow us to construct a pointed translation surface that is canonically isomorphic to $S_1'$. The construction also gives canonical basis  $(h_\alpha)_{\alpha \in \mathcal{A}}$ and $(\zeta_\alpha)_{\alpha \in \mathcal{A}}$ of the homology groups:
\begin{equation*}
    H_1(S \backslash \Sigma_{S'},\mathbb{Z}),~~~\text{and}~~~H_1(S,\Sigma_{S'},\mathbb{Z}),
\end{equation*}
see \cite{yoccoz2007interval} for more details. For all $x\in I_\alpha^t$, we have the first return cocycle
\begin{equation*}
    C_1(S',x)=h_\alpha.
\end{equation*}
The zippered rectangles construction provides an open map from a neighborhood of $(\lambda_0,\tau_0)$ in $\R_{>0}^{\mathcal{A}}\times \R_{>0}^{\mathcal{A}}$ to $\ho(\kappa)$. We can choose a neighborhood $U$ of $(\lambda_0,\tau_0)$ that satisfies the following properties:
\begin{itemize}
\item There exists $\delta >0$ such that $U=\{g_s(\lambda,\tau) ~,~(s,(\lambda,\tau))\in (-\delta,\delta)\times V\}$ where $V$ is the cross section formed by surfaces of $U$ such that $\|\lambda\|_1=1$ (with $g_s(\lambda,\tau)=(e^s\lambda,e^{-s}\tau)$).
\item There exists $C_1>0$ such that, for all $(\lambda,\tau) \in U$:
\begin{equation*}
 C^{-1}_1 \le \lambda_\alpha \le C_1,~~~\text{and}~~~C^{-1}_1 \le \tau_\alpha \le C_1,~~~\forall \alpha \in \mathcal{A}.
\end{equation*}
\end{itemize}
The space $\ho(\kappa)$ of pointed translation surfaces is a manifold, then we can reduce $V,\delta$ and assume that the map $U\rightarrow \ho(\kappa)$ is an embedding, and then the return time on $V$ of the Teichmüller flow is bounded from below by $2\delta$.\\
We still denote $U,V$ the image of $U,V$ in $\ho(\kappa)$ and $U^{(1)},V^{(1)} \subset \hoc(\kappa)$. For all surfaces $Y'=(Y,y_0)\in U$, there is a canonical representation of $Y'$ using the same model as $S'_1$, and then we have trivialization of the homology group of all surfaces in $U$. We use the following notations:
\begin{itemize}
\item $I(Y')$ canonical segment of $Y'$ used for the zippered rectangles construction,
\item $w_Y$ is the Abelian differential defining $Y$,
\item $C_\n(Y',y)$ the $\n-th$ return cycle on $I(Y')$ for $y\in I(Y')$,
\item $C_\n(Y')$ the $\n-th$ return cycle of the vertical leaf starting from the marked point $y_0$,
\item $\|c\|_1= \sum_\alpha |\langle \zeta_\alpha , c\rangle|,~~\forall c \in H_1(Y \backslash \Sigma_{Y'},\mathbb{R})$.
\end{itemize}
\begin{rem}
For all $\n,\alpha,y$, we have $\langle \zeta_\alpha, C_\n(Y',y) \rangle\ge 0$, when it's defined.
\end{rem}
We also need the following technical lemma, let $I^*(Y)$ be the subset of points in $I(Y)$ with an infinite future orbit.

\begin{lem} 
\label{ie}
If the vertical flow of $S'_1$ is minimal, then there exists $l$ such that all the trajectories of length larger than $l$ cross all the cycles $\zeta_\alpha$ at least once, in other words.
\begin{equation}
\langle \zeta_\alpha , C_\n(S'_1,x) \rangle > 0,~~~~~\forall \n\ge l,~~~\forall x\in I^*(S'_1),~~~\text{and}~~~\forall \alpha \in \mathcal{A}.
\end{equation}
For all $l\ge 0$, by reducing $V$, we can assume for all $Y'\in V$, and for all $\n \le l$:
\begin{equation}
C_\n(Y')=C_\n(S'_1).
\end{equation}
\end{lem}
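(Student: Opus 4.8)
The plan is to prove the two assertions separately, the first being a compactness-type argument on the interval exchange transformation and the second a continuity statement for return cycles under perturbation of the zippered rectangles data.

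For the first assertion, I would argue by contradiction combined with a compactness argument. Suppose no such $l$ exists: then for every $n$ there is a point $x_n\in I^*(S_1')$ and an index $\alpha_n\in\mathcal{A}$ with $\langle\zeta_{\alpha_n},C_n(S_1',x_n)\rangle=0$, i.e.\ the trajectory of length $n$ starting at $x_n$ never crosses the rectangle indexed by $\alpha_n$. Since $\mathcal{A}$ is finite, after passing to a subsequence we may assume $\alpha_n=\alpha$ is fixed, and that $x_n\to x_\infty\in I$. Because $\langle\zeta_\alpha,C_n(S_1',x)\rangle$ counts the number of visits of the orbit segment $x,Tx,\dots,T^{n}x$ to $I_\alpha^t$, and this quantity is monotone in $n$, the condition says the orbit of $x_n$ avoids $I_\alpha^t$ for its first $n$ steps. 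By minimality of the vertical flow (equivalently, of the interval exchange $T$ on $I^*$), every forward orbit is dense, so for the limit point $x_\infty$ there is a finite time $m$ with $T^m x_\infty\in \mathrm{int}\,I_\alpha^t$. The map $T^m$ is continuous on a neighbourhood of $x_\infty$ avoiding the finitely many discontinuity points (the set $I\setminus I^{t}$ of its $m$-step discontinuities), so for $n$ large enough $x_n$ lies in this neighbourhood and $T^m x_n$ lies in $I_\alpha^t$ as well; taking $n>m$ contradicts the choice of $x_n$. This gives the existence of $l$. The point where one must be slightly careful is that minimality only guarantees density of orbits of points in $I^*$, so one should note that $x_\infty$ may a priori be a singular point; but we only need \emph{one} nearby point with a good orbit, and the density/uniform recurrence of $T$ on $I^*$, together with the fact that $I_\alpha^t$ has nonempty interior, still produces a uniform $m$ valid on an open set — alternatively one invokes unique ergodicity in the minimal case (Theorem \ref{KMS} / Keane) to get that the frequency of visits to $I_\alpha^t$ is bounded below, which directly yields a uniform $l$. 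This uniform-recurrence step is the main obstacle, and I expect the cleanest route is the unique-ergodicity one: if the vertical flow is minimal and (by Keane) uniquely ergodic, then $\frac{1}{n}\langle\zeta_\alpha,C_n(S_1',x)\rangle\to\lambda_\alpha>0$ uniformly in $x\in I^*$, so in particular the quantity is positive for all $n\ge l$ with $l$ independent of $x$ and $\alpha$.

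For the second assertion, recall that $V$ parametrizes zippered rectangles with the \emph{same} combinatorial data $(\mathcal{A},\pi)$ as $S_1'$, and the return cycle $C_n(Y')$ is determined purely combinatorially: writing the orbit of the marked point $y_0$ under the interval exchange $T_{\lambda(Y')}$, the cycle $C_n(Y')$ is the sum $h_{\beta_1}+\dots+h_{\beta_n}$ where $\beta_1,\dots,\beta_n$ is the sequence of names of the first $n$ iterates, and this symbolic coding depends only on the order type of the partition points of $I(Y')$ relative to the orbit of $y_0$. For $Y'$ close to $S_1'$ the lengths $\lambda(Y')$ are close to $\lambda_0$, hence the first $n$ partition-point comparisons that determine the names $\beta_1,\dots,\beta_l$ are the same as for $S_1'$ — here one uses that the left endpoint $x_0$ is nonsingular and the surface has no vertical saddle connection, so the orbit of $y_0$ of length $l$ never hits a partition point and the relevant inequalities are strict, hence stable under small perturbation. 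Therefore, fixing $l$, there is a neighbourhood of $S_1'$ in $V$ on which the symbolic coding of the first $l$ steps agrees with that of $S_1'$, giving $C_n(Y')=C_n(S_1')$ for all $n\le l$. Shrinking $V$ (and correspondingly $\delta$) to this neighbourhood proves the claim. The only subtlety here is ensuring that after this shrinking the cross-section $V$ still satisfies the bounded-geometry conditions imposed earlier ($C_1^{-1}\le\lambda_\alpha,\tau_\alpha\le C_1$ and return time $\ge 2\delta$), which is automatic since we are only passing to a smaller neighbourhood of the same central surface.
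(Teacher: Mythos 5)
Your argument for the second assertion is essentially the paper's: both use the fact that the symbolic coding of the first $l$ iterates of the marked point is determined by finitely many strict inequalities among continuous functions of $\lambda$ (the partition endpoints $u^\pm_\alpha(\lambda)$ and the iterate $T_\lambda^{m}(0)$), and that absence of vertical saddle connections makes these inequalities strict and hence stable under a small perturbation of $\lambda$; the paper just runs this as an explicit induction on $m$. That part is fine.

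The first assertion is where the gap lies, and you have correctly sensed it without closing it. Your compactness-plus-contradiction argument is carried out on the interval $I$: you pass to a subsequence $x_n\to x_\infty\in I$ and then want to apply continuity of some $T^m$ at $x_\infty$. As you note, $x_\infty$ need not lie in $I^*$ and may hit a discontinuity, in which case $T^m$ is not continuous there and the argument stalls. Your two proposed repairs do not work as stated. The appeal to ``density/uniform recurrence of $T$ on $I^*$'' is precisely the statement you are trying to prove (a uniform bound on hitting times), so it cannot be invoked. And the appeal to ``unique ergodicity in the minimal case (Theorem~\ref{KMS} / Keane)'' is based on a false premise: Keane's criterion gives minimality, not unique ergodicity, and there exist minimal interval exchange transformations that are not uniquely ergodic (Keane's own examples, Keynes--Newton); Theorem~\ref{KMS} only gives unique ergodicity for almost every direction, while the lemma's hypothesis is minimality of a fixed direction. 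The paper avoids all of this by moving the compactness argument into the symbolic space: it introduces the shift space $\Sigma_\lambda\subset\mathcal{A}^{\N}$, the closure of the codings $\iota_\lambda(I^*_\lambda)$, which \emph{is} compact, proves the shift on $\Sigma_\lambda$ is conjugate to $T$, and observes that the symbolic first-hitting function $\tilde m_\alpha(u)=\min\{n:\ u_n=\alpha\}$ is continuous on $\Sigma_\lambda$; minimality makes it finite everywhere, and compactness of $\Sigma_\lambda$ then gives a uniform bound, which pulls back to the uniform bound on $m_\alpha$ over $I^*$. In short: compactify the coding space, not the interval. Your contradiction argument can be salvaged by carrying it out on $\Sigma_\lambda$ rather than on $I$ (take a convergent subsequence of the symbolic sequences $\iota(x_n)$ in $\mathcal{A}^{\N}$ and use minimality of the subshift), but as written on $I$ it has a genuine hole, and the suggested unique-ergodicity shortcut is not available under the stated hypothesis.
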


The proof of this lemma uses the formalism of intervals exchange, and it's given in the appendix. By assumption, we can choose an integer $l$ that satisfies the first part of the lemma and reduce $V$ so that the second statement is also satisfied (we keep the same notation $V$ for this smaller cross section). As we have a uniform lower bound for the return time in $V^{(1)}$, the measure on $V^{(1)}$ induced by $\mu_{S'}$ is finite. We denote by $\sigma$ the volume of $V^{(1)}$, and $\nu$ the measure induced on $V^{(1)}$.

The following result is a corollary of the Birkhoff theorem  (theorem \ref{CE1}).
\begin{cor}
\label{CE11}
For almost every $\theta$, the following limit is true:
\begin{equation*}
    \lim_{s \rightarrow +\infty} \frac{\#\{u\le s,~~~~ g_u r_\theta S' \in V \}}{s}= \sigma.
\end{equation*}
In the LHS we count the number of visits in the cross section on the interval $[0,s]$.
\end{cor}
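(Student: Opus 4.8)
The plan is to rewrite the counting function as a Birkhoff average for the Teichmüller flow and then apply the Chaika--Eskin equidistribution theorem (Theorem~\ref{CE1}). The key structural fact is that $U^{(1)}$ is an \emph{embedded flow box}: by construction $U^{(1)}=\{g_sY':Y'\in V^{(1)},\ |s|<\delta\}$, the parametrization $(s,Y')\mapsto g_sY'$ is injective (the map $U\to\hoc(\kappa)$ is an embedding), and hence every point of $U^{(1)}$ has a unique such representation; equivalently, any two times at which a fixed Teichmüller trajectory lies in $V^{(1)}$ differ by at least $2\delta$, which is exactly the lower bound on the return time to $V$ recorded above. It follows that $\{u:g_ur_\theta S'\in U^{(1)}\}=\bigcup_v(v-\delta,v+\delta)$, a disjoint union over the visit times $v$ of $u\mapsto g_ur_\theta S'$ to $V^{(1)}$. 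Writing $N(s):=\#\{u\le s:g_ur_\theta S'\in V\}$, which equals the number of visits to $V^{(1)}$ in $[0,s]$ since every surface $g_ur_\theta S'$ has area one, and accounting for the boundary truncation at $0$ and $s$, one gets
\[
\Bigl|\,\int_0^s\mathbf{1}_{U^{(1)}}\bigl(g_ur_\theta S'\bigr)\,du-2\delta\,N(s)\,\Bigr|\ \le\ 4\delta,\qquad s>0 .
\]

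The next step is to identify the Birkhoff average of $\mathbf{1}_{U^{(1)}}$. Because $C_1^{-1}\le\lambda_\alpha,\tau_\alpha\le C_1$ on $U$, the zippered-rectangle data stay in a compact region, so $\overline{U^{(1)}}$ is compact in $\hoc(\kappa)$; moreover the neighbourhood can be chosen so that $V^{(1)}$ is a domain with $\nu$-negligible boundary. Then $\partial U^{(1)}\subset g_{\delta}V^{(1)}\cup g_{-\delta}V^{(1)}\cup\{g_sY':Y'\in\partial V^{(1)},\ |s|\le\delta\}$, and each piece is $\mu_{S'}$-null: $\mu_{S'}$ is locally Lebesgue on the affine submanifold $\hc(S')$, the first two pieces have codimension one there, and the last has $\nu$-null cross-section. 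Hence $\mu_{S'}(\partial U^{(1)})=0$, so for every $\epsilon>0$ there are continuous compactly supported functions $\phi^-\le\mathbf{1}_{U^{(1)}}\le\phi^+$ on $\hc(S')$ with $\int(\phi^+-\phi^-)\,d\mu_{S'}<\epsilon$. Feeding $\phi^\pm$ into Theorem~\ref{CE1} applied to $S'$ (whose $\mathrm{SL}_2(\R)$-orbit closure carries $\mu_{S'}$), and letting $\epsilon\to0$ along a countable sequence, would yield, for almost every $\theta$,
\[
\lim_{s\to+\infty}\frac1s\int_0^s\mathbf{1}_{U^{(1)}}\bigl(g_ur_\theta S'\bigr)\,du\ =\ \mu_{S'}\bigl(U^{(1)}\bigr).
\]

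Finally, since $\mu_{S'}$ is invariant under the Teichmüller flow, on the flow box it disintegrates as the product of Lebesgue measure on $(-\delta,\delta)$ with the transverse measure $\nu$ on $V^{(1)}$; therefore $\mu_{S'}(U^{(1)})=2\delta\,\nu(V^{(1)})=2\delta\,\sigma$. Dividing the first display by $s$, letting $s\to+\infty$, and combining with the second display gives $N(s)/s\to\mu_{S'}(U^{(1)})/(2\delta)=\sigma$ for almost every $\theta$, which is the assertion. The step that needs the most care is the interface between the \emph{open} flow box $U^{(1)}$ and the \emph{transverse} cross-section $V^{(1)}$: one must be certain the box is embedded with a genuine lower bound $2\delta$ on the return time to $V$, so that visits to $V^{(1)}$ are in bijection — up to bounded boundary corrections — with the time the orbit spends in $U^{(1)}$, and one must choose $V^{(1)}$ with $\mu_{S'}$-negligible boundary so that the discontinuous indicator $\mathbf{1}_{U^{(1)}}$ can be squeezed between continuous functions before Theorem~\ref{CE1} is invoked; the remaining steps are routine bookkeeping.
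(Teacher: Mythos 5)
The paper states Corollary~\ref{CE11} without proof, describing it only as a consequence of Theorem~\ref{CE1}; your argument correctly supplies the natural derivation. The flow-box bookkeeping (uniqueness of the representation $g_sY'$ with $|s|<\delta$, return time at least $2\delta$, the bound $|\int_0^s\mathbf{1}_{U^{(1)}}(g_ur_\theta S')\,du-2\delta N(s)|\le 4\delta$), the squeeze of $\mathbf{1}_{U^{(1)}}$ between continuous compactly supported functions using $\mu_{S'}(\partial U^{(1)})=0$, the application of Chaika--Eskin, and the product disintegration $\mu_{S'}(U^{(1)})=2\delta\,\nu(V^{(1)})=2\delta\sigma$ are all handled correctly and match what the paper implicitly takes for granted (modulo the paper's tacit assumption that Theorem~\ref{CE1} applies in the pointed setting $\hoc(\kappa)$, which you also adopt).
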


\subsection*{Large excursion}
\label{paragraph_large_excurssion}
For almost every $\theta$, the surface $r_\theta S'$ is generic for the corollary \ref{CE11}, and then the trajectory $(g_s r_\theta S' )$ visits $V$ an infinite number of times. We denote by $(S^{(n)}_\theta)_{n\ge 0}$ the sequence of visits and $S_\theta=S^{(0)}_\theta$ the first one, and we still denote $f$ the cocycle on $S_\theta$ (we drop the $'$ for simplicity). We prove the following result, which allows us to construct trajectories with a good diffusion rate.
\begin{prop}
\label{LDC}
If $\Lambda(f)>0$, for almost every $\theta$, for all $\epsilon > 0$, there exists a sequence $\n_n$ 
\begin{equation}
| \langle f, C_{\n_n}(S_\theta) \rangle | \ge \exp ((\Lambda f-\epsilon)\log \n_n).
\end{equation}
Moreover
\begin{equation}
\lim_n \frac{\log \n_n}{n} = \beta := \frac{1}{\sigma}.
\end{equation}
\end{prop}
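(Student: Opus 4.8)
\textbf{Proof proposal for Proposition~\ref{LDC}.}
The plan is to combine the Oseledets genericity of $r_\theta S'$ (Theorem~\ref{CE2}) with the Birkhoff-type count of visits to the cross-section $V$ (Corollary~\ref{CE11}), and then transport the resulting growth statement back to the return cycles $C_\n(S_\theta)$ via the zippered rectangles identifications set up in Section~\ref{paragraph_cross_section}. First I would fix a full-measure set of $\theta$ for which simultaneously: (i) $r_\theta S'$ is Oseledets-generic for the Kontsevich--Zorich cocycle, (ii) the visit count in Corollary~\ref{CE11} holds, and (iii) the vertical flow on $r_\theta S'$ is minimal (Keane, using Theorem~\ref{KMS}). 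For such $\theta$, the sequence of visits $S^{(n)}_\theta = g_{u_n} r_\theta S'$ satisfies $u_n/n \to 1/\sigma = \beta$ by Corollary~\ref{CE11}, and the Teichmüller return map to $V$ is a well-defined cocycle with the mapping-class-group elements $\Gamma_n$ coming from the identifications $g_{u_n} r_\theta S' = \Gamma_n \cdot S^{(n)}_\theta$ inside $V$.

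The core step is to produce, for each $\epsilon>0$, an increasing sequence of indices $\n_n$ with $|\langle f, C_{\n_n}(S_\theta)\rangle| \ge \exp((\Lambda f - \epsilon)\log \n_n)$. The idea is that after flowing for Teichmüller time $u_n$, a vertical trajectory on $S_\theta$ that wraps around a fixed amount on $S^{(n)}_\theta$ corresponds, via $\Gamma_n$, to a return cycle on $S_\theta$ whose homology class is $\Gamma_n^*$ applied to a bounded class; since $\|\lambda\|_1=1$ and the lengths/heights are bounded on $V$ by the uniform constant $C_1$, the number of returns $\n_n$ needed to reach $S^{(n)}_\theta$-time $1$ is comparable to the total area swept, which is $e^{u_n}$ up to bounded multiplicative constants — so $\log \n_n \sim u_n \sim \beta n$, giving the second assertion. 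For the first assertion, pick a fixed integral homology class $c_0$ on $S_\theta$ (e.g.\ one of the $h_\alpha$, or better a class pairing nontrivially against the top Lyapunov direction of $f$) with $\langle f, c_0\rangle \neq 0$; Oseledets genericity gives $\limsup_n \frac{1}{u_n}\log \|\Gamma_n^* c_0\|_h \ge \Lambda(f)$ along an appropriate subsequence (taking the limsup over all Teichmüller times and restricting to the visit times, which is legitimate since visits occur with positive frequency). Translating $\|\Gamma_n^* c_0\|_h$ into $|\langle f, C_{\n_n}(S_\theta)\rangle|$ uses that $\langle f, \Gamma_n^* c_0\rangle = \langle \Gamma_n f, c_0\rangle$ up to the cocycle relation, and that the Hodge norm on the bounded surface $S^{(n)}_\theta \in V$ is uniformly equivalent to the $\|\cdot\|_1$-norm built from the $\zeta_\alpha$ (again by the $C_1$-bounds on $V$ and compactness of $\overline V^{(1)}$). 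Lemma~\ref{ie} guarantees that for $\n$ large enough the cycle $C_\n$ genuinely picks up all the $\zeta_\alpha$-components, so no cancellation destroys the lower bound.

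Concretely, the steps in order are: (1) fix the full-measure $\theta$-set and record $u_n/n\to\beta$; (2) establish the dictionary $\log\n_n \asymp u_n$ between return-cycle index and Teichmüller time, using the uniform rectangle bounds on $V$; (3) invoke Oseledets genericity to get exponential growth $e^{(\Lambda f - \epsilon)u_n}$ of $\|\Gamma_n^* c_0\|_h$ along visit times, for a well-chosen test class $c_0$; (4) convert Hodge norm to the combinatorial $\|\cdot\|_1$-norm and then to $|\langle f, C_{\n_n}\rangle|$, invoking Lemma~\ref{ie} to rule out cancellations; (5) combine (2) and (3)--(4) to rewrite the bound as $\exp((\Lambda f - \epsilon')\log\n_n)$ after absorbing bounded factors and adjusting $\epsilon$. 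The main obstacle I expect is step (3)--(4): Oseledets genericity is a statement about the growth of the full cocycle along \emph{continuous} Teichmüller time, and one must carefully pass to the discrete visit times $u_n$ (which is fine because they have density $\sigma>0$) and, more delicately, ensure that the chosen test class $c_0$ actually realizes the top exponent $\Lambda(f)$ rather than a smaller one — this is where one needs that $f$ itself has exponent $\Lambda(f)$, so some vector in the relevant subspace grows at that rate, and that pairing against a generic integral class detects it. A secondary technical point is that the identification $g_{u_n} r_\theta S' = \Gamma_n S^{(n)}_\theta$ together with the pullback of $f$ must be tracked compatibly with the basis $(h_\alpha),(\zeta_\alpha)$ on each $S^{(n)}_\theta$, which is exactly what the canonical zippered-rectangle model on $U$ was set up to provide.
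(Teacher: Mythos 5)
Your overall architecture (Oseledets genericity + visit counting + transporting via the zippered-rectangle identifications) matches the paper's, but there is a genuine gap at the heart of steps (3)--(4), and you have the Oseledets bookkeeping pointing the wrong way.

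The quantity you want to bound below is $|\langle f, C_{\n_n}(S_\theta)\rangle|$. By the identifications on the cross-section this equals $|\langle \Gamma^{(n)}_* f, C_k(S^{(n)}_\theta)\rangle|$ for some small $k$. The object whose growth rate is $\Lambda(f)$ is $\|\Gamma^{(n)}_* f\|$, i.e.\ the Kontsevich--Zorich cocycle applied to $f$ — not $\|\Gamma_n^* c_0\|$ for a fixed homology class $c_0$. For a generic integral $c_0$ the latter would grow at the \emph{top} exponent $\lambda_1$, not at $\Lambda(f)$; conversely a $c_0$ chosen to pair nontrivially with $f$ has no reason to grow at rate $\Lambda(f)$. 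Either way you would be tracking the wrong Lyapunov data, and the inequality you need would not follow. But the more serious issue is what you flag yourself: even with $\|\Gamma^{(n)}_* f\|_1 \sim e^{\Lambda(f) s_n}$, the pairing $\langle \Gamma^{(n)}_* f, c_0\rangle$ for a single fixed $c_0$ can oscillate and be far smaller than $\|\Gamma^{(n)}_* f\|_1$ — the transported class can rotate so that its component along $c_0$ is arbitrarily small. Lemma~\ref{ie} (positivity of $\zeta_\alpha$-components of the return cycles) does \emph{not} by itself prevent this cancellation, because $f$ need not be a positive combination.

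The paper closes this gap with a finite \emph{spanning} family, not a single test class: by Lemma~\ref{ie} the cycles $C_k(Y')$ for $k\le l$ are literally the same for all $Y'\in V$ and span $H_1(S\setminus\Sigma_{S'},\R)$, hence there is a uniform $C_2>0$ with $\max_{k\le l}|\langle c, C_k(S_\theta)\rangle| \ge C_2\|c\|_1$ for every $c$. Applying this to $c=\Gamma^{(n)}_* f$ produces a \emph{varying} index $l_n\le l$, and $\n_n := r^{(n)}_{l_n}$ realizes $|\langle f, C_{\n_n}(S_\theta)\rangle|\ge C_2\|\Gamma^{(n)}_* f\|_1$ with no transversality assumption. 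This is a piece of linear algebra your single-$c_0$ version lacks. Finally, your estimate $\log \n_n \sim u_n$ by an "area swept" heuristic points in the right direction but is not a proof; the paper invokes a lemma of Zorich (Lemma~\ref{lemzo1}) giving $\log N^{(n)}_\alpha / n \to \beta$ for each $\alpha$, and then writes $\n_n=\sum_\alpha a(\alpha)N^{(n)}_\alpha$ with $\sum a(\alpha)=l$ fixed, to conclude $\log\n_n/n\to\beta$. You should replace the heuristic by that input.
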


Using lemma \ref{lem_distance_fiber} on lattice covering, we can deduce the following corollary:

\begin{cor}
If $\pi : \tilde{S'} \rightarrow S'$ is a lattice covering of a compact translation surface defined by a cocycle $f$, with $\Lambda f>0$. For almost every $\theta$ and for all $\epsilon>0$, there exists a sequence of times $(t_n)_{n\ge 1}$ such that
\begin{equation*}
\lim_n \frac{\log t_n}{n} = \frac{1}{\sigma},
\end{equation*}
and
\begin{equation*}
\liminf_n \frac{\log d(\tilde{x},\tilde{\phi}_{t_n}(\tilde{x},\theta))}{\log t_n} = \Lambda f - \epsilon.
\end{equation*}
Where $\tilde{x}$ is the marked point.
\end{cor}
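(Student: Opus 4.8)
The plan is to read this corollary off from Proposition~\ref{LDC}: the corollary is essentially a restatement of the proposition via Lemma~\ref{lem_distance_fiber}, and the only point requiring care is the translation of the return cycles on the renormalized surface $S_\theta$ into drift vectors of the direction-$\theta$ geodesic flow on $\tilde S'$. That translation is exactly the computation already performed inside the proof of Lemma~\ref{lem_covering_gathering}.

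First I would fix $\theta$ in the full-measure set on which Proposition~\ref{LDC} holds and fix $\epsilon>0$, and let $(\n_n)$ be the sequence it provides, so that $|\langle f,C_{\n_n}(S_\theta)\rangle|\ge\exp((\Lambda f-\epsilon)\log\n_n)$ and $\tfrac{1}{n}\log\n_n\to\beta=\tfrac{1}{\sigma}$. Recall $S_\theta=g_{s_0}r_\theta S'$, where $s_0\ge 0$ is the first visit time of $(g_s r_\theta S')$ to the cross section $V$. Since $g_{s_0}$ carries vertical leaves to vertical leaves and fixes the marked point, the vertical flow of $S_\theta$ is, up to the fixed linear map $g_{s_0}$ and a time change, the push-forward of the direction-$\theta$ flow of $S'$; pulling the canonical interval $I(S_\theta)$ back by $g_{s_0}$ produces a transverse segment $I_0$ on $S'$ with left endpoint $x_0=\pi(\tilde{x})$, for which the $\n$-th return cycle of the direction-$\theta$ flow started at $x_0$ coincides, as a class in $H_1(S',\Z)$, with $C_{\n}(S_\theta)$. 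I would then define $t_n$ to be the time of the $\n_n$-th return of that flow to $I_0$.

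Two elementary estimates then conclude. First, the return times to $I_0$ take finitely many positive values (the zippered-rectangle picture for the fixed surface $S_\theta$, rescaled by the constant $e^{s_0}$), so $t_n$ is comparable to $\n_n$ and $\tfrac{1}{n}\log t_n=\tfrac{1}{n}\log\n_n+o(1)\to\tfrac{1}{\sigma}$, which is the first assertion. Second, exactly as in the proof of Lemma~\ref{lem_covering_gathering}, $\tilde{\phi}_{t_n}(\tilde{x},\theta)$ lies on $\mathbf{n}_n\cdot\tilde{I}_0$ with $\mathbf{n}_n=\langle f,C_{\n_n}(S_\theta)\rangle\in\Z^d$, so Lemma~\ref{lem_distance_fiber} and the triangle inequality give $d(\tilde{x},\tilde{\phi}_{t_n}(\tilde{x},\theta))\ge A_0^{-1}\|\mathbf{n}_n\|-|I_0|\ge A_0^{-1}\exp((\Lambda f-\epsilon)\log\n_n)-|I_0|$. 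Taking logarithms and dividing by $\log t_n=\log\n_n+O(1)$ yields $\liminf_n \tfrac{\log d(\tilde{x},\tilde{\phi}_{t_n}(\tilde{x},\theta))}{\log t_n}\ge\Lambda f-\epsilon$, which is the inequality actually used afterwards in the proof of Theorem~\ref{SB}; the literal equality follows by noting that the sequence $\n_n$ supplied by the renormalization argument of Proposition~\ref{LDC} can be taken to satisfy the matching upper bound $\|\mathbf{n}_n\|\le\exp((\Lambda f-\epsilon+o(1))\log\n_n)$ as well, together with Zorich's deviation estimate recalled in the sequel.

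The main obstacle will be the bookkeeping in the second step: one must verify that the topological identification of $S_\theta$ with $S'$ induced by the Teichmüller path $(g_s r_\theta S')_{0\le s\le s_0}$ — under which $f\in H^1(S',\Z)$ and the return cycles are transported — is the one compatible with the deck group action, so that the combinatorial cycle $C_{\n_n}(S_\theta)$ genuinely records the element $\mathbf{n}_n\in\Z^d$ relating $\tilde{x}$ to $\tilde{\phi}_{t_n}(\tilde{x},\theta)$. Once this identification is in place, the corollary reduces entirely to the bounded return times and Lemma~\ref{lem_distance_fiber}.
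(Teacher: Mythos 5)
Your proof is essentially correct and follows the route the paper silently intends: Proposition~\ref{LDC} feeds through Lemma~\ref{lem_distance_fiber}, with the computation in Lemma~\ref{lem_covering_gathering}'s proof (point $\tilde\phi_{t_\n}(\tilde x)$ sits on $\mathbf{n}\cdot\tilde I$ with $\mathbf{n}=\langle f,C_\n\rangle$) doing the translation from return cycles to drift vectors. The bounded return-time argument, the constant factor $e^{s_0}$ coming from the renormalization $S_\theta=g_{s_0}r_\theta S'$, and the topological-identification caveat are all handled correctly. You also rightly observe that only the $\ge$ direction is used downstream.

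One quantitative slip in the last step: you assert that the sequence $\n_n$ can be chosen with the matching upper bound $\|\mathbf n_n\|\le\exp\left((\Lambda f-\epsilon+o(1))\log\n_n\right)$. That is not what the renormalization gives. The lower bound in the proof of Proposition~\ref{LDC} actually comes from $\frac{\log\|\Gamma^{(n)}_*f\|_1}{\log\n_n}\to\Lambda f$, and Zorich's uniform deviation bound (Proposition~\ref{upbound1}) gives the opposite inequality $|\langle f,C_{\n_n}\rangle|\le\exp\left((\Lambda f+\epsilon)\log\n_n\right)$; together these force $\frac{\log|\langle f,C_{\n_n}\rangle|}{\log\n_n}\to\Lambda f$, not $\Lambda f-\epsilon$. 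So the literal ``$=\Lambda f-\epsilon$'' of the corollary is not obtained by your argument (nor, it seems, by any argument: the displayed equality looks like an imprecision in the paper, and should read either ``$\ge\Lambda f-\epsilon$'' or ``$\to\Lambda f$''). Since the only thing used later is the lower bound, your proof is correct for the operative content, but you should replace the final paragraph's claim of literal equality either by the $\ge$ you actually derive or by the sharper limit $\Lambda f$ coming from combining the two one-sided estimates.
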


We give the proof of proposition \ref{LDC}.
\begin{proof}
We fix $\theta$ 
In the marked Teichmüller space, we have $S^{(n)}_\theta = g_{s_n}\Gamma^{(n)}(S_\theta) S_\theta$, where $\Gamma^{(n)}(S_\theta)$ is a homeomorphism and $s_n:=s_n(S_\theta)$ is the n-th return time on $V^{(1)}$ of $(g_s(S_\theta))_{s\ge 0}$. If $I^{(n)}(S_\theta)=e^{-s_n} I(S_\theta)$, and let $N^{(n)}_\alpha:=N^{(n)}_\alpha(S_\theta)$ be the return time on $I^{(n)}(S_\theta)$ of the trajectory $(T^k_{S_\theta} y)_{k\ge 0}$ of a point $y\in I^{(n)}_\alpha(S_\theta)=e^{-s_n}I_\alpha(S^{(n)}_\theta)$, and $r_k^{(n)}:=r_k^{(n)}(S_\theta)$ the k-th return of the trajectory starting from the marked point. We have the relation
\begin{equation*}
    \langle \zeta_\alpha , C_\n(S_\theta,y)\rangle = |\{ k < \n ,~T_{S_\theta}^k(y)\in I_\alpha^t(S_\theta)\}|.
\end{equation*}
And then we have
\begin{equation*}
    \|C_\n(S_\theta,y)\|_1=\n.
\end{equation*}
The pullback by $\Gamma^{(n)}(S_\theta)$ of the first return cycle on $I(S^{(n)}_\theta)$ starting to a point in $y\in I_\alpha(S^{(n)}_\theta)$ is equal to the first return cycle on $I^{n}(S_\theta)$ starting from the point $e^{-s_n}y\in I^{(n)}_\alpha(S_\theta)$. In other words, we have the following equality:
\begin{equation*}
    C_{N^{(n)}_\alpha}(S_\theta,e^{-s_n}y)= \Gamma^{(n)}(S_\theta)^*C_1(S^{(n)}_\theta,y)=\Gamma^{(n)}(S_\theta)^*h_\alpha,
\end{equation*}
and also
\begin{equation*}
     C_{r_k^{(n)}}(S_\theta)= \Gamma^{(n)}(S_\theta)^*C_k(S^{(n)}_\theta).
\end{equation*}
We obtain the following:
\begin{eqnarray*}
N^{(n)}_\alpha&=&\| \Gamma^{(n)}(S_\theta)^*h_\alpha \|_1,~~~  \forall n,\alpha;\\
r^{(n)}_k&=&\| \Gamma^{(n)}(S_\theta)^*C_k(S_\theta^{(n)}) \|_1,~~~  \forall n,k.
\end{eqnarray*}

For all $n$, using lemma \ref{ie} if $k\le l$, we have 
\begin{equation*}
|\langle  f, C_{r^{(n)}_k}(S_\theta) \rangle | = |\langle  f, \Gamma^{(n)}(S_\theta)^*C_{k}(S_\theta^{(n)}) \rangle | = |\langle \Gamma^{(n)}(S_\theta)_* f ,  C_{k}(S_\theta) \rangle |.
\end{equation*}
Using the same lemma \ref{ie} for each $\alpha$ there exists $k<l$ such as $C_{k+1}(S_\theta)=h_\alpha + C_k(S_\theta)$. According to this, the cycles $(C_k(S_\theta))_{k\le l}$ generate the space $H_1(S\backslash \Sigma_{S'},\mathbb{R})$, then there exists a constant $C_2>0$ (that depends only of the cross section in the moduli space) such that for all cycle $c\in H_1(S ,\Sigma_{S'},\mathbb{R})$ we have
\begin{equation*}
\max_{k\le l} |\langle c , C_k(S_\theta) \rangle | \ge C_2 \|c\|_1.
\end{equation*}
And then, for all $n$, there exists $l_n:=l_n(S_\theta)$ such as
\begin{equation*}
|\langle  f, C_{r^{(n)}_{l_n}}(S_\theta) \rangle | \ge C_2 \| \Gamma^{(n)}.(S_\theta)_* f  \|_1.
\end{equation*}
As the cross section is relatively compact in the Teichmüller space, we can compare the Hodge norm and the $\|.\|_1$ norm on the Hodge bundle restricted to the cross section (by pullback, $f$ induces an element of $H_1(S\backslash \Sigma_{S'},\R)$, and then we can define $\|f\|_1=\sum_\alpha |\langle f,h_\alpha\rangle|$ on $H^1(S,\R)$). There exists a constant $C_3 >0$ such that for all $f\in H_1(S,\R)$:
\begin{equation*}
\| f\|_1 \ge C_3 \|f\|_h.
\end{equation*}
 Let $\n_n=r^{(n)}_{l_n}$, for almost every $\theta$, by the theorem \ref{CE2} we have 
\begin{equation*}
\frac{\log  \| \Gamma^{(n)}(S_\theta)_* f  \|_1}{ n} \ge \frac{\log  \| G_{s_{n}}  (r_\theta \cdot S', f)\|_h}{ n}+ \frac{\log C_3}{n}.
\end{equation*}
Then, by applying the Oseldet theorem to $(S',f)$, we have for almost every $\theta$:
\begin{equation*}
\lim_n \frac{\log  \| G_{s_n}  r_\theta \cdot S', f)\|_h}{s_{n}} = \Lambda f.
\end{equation*}
And then, as
\begin{equation*}
\lim_{n \rightarrow + \infty} \frac{s_n}{n}= \beta,
\end{equation*}
we have
\begin{equation*}
\lim_{n \rightarrow + \infty} \frac{\log  \| \Gamma^{(n)}(S_\theta)_* f  \|_1}{n}=\Lambda f \beta.
\end{equation*}
To conclude, we use a lemma extracted from \cite{zorich1999wind}:
\begin{lem}
\label{lemzo1}
For almost every $\theta$ and for all $\alpha$
\begin{equation*}
\lim_{n \rightarrow +\infty} \frac{\log N^{(n)}_\alpha(S_\theta)}{n}=\beta.
\end{equation*}
\end{lem}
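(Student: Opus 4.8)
The plan is to deduce the lemma from the asymptotics of the Teichmüller return times $s_n := s_n(S_\theta)$ that are already available. Precisely, I would establish that there is a constant $C>0$, depending only on the cross-section $V^{(1)}$ (hence not on $\theta$, $n$ or $\alpha$), such that
\begin{equation*}
C^{-1}\,e^{s_n}\;\le\; N^{(n)}_\alpha(S_\theta)\;\le\; C\,e^{s_n},\qquad\text{for every }n\ge 0\text{ and every }\alpha\in\mathcal A .
\end{equation*}
Granting this, the lemma follows immediately: take logarithms, divide by $n$, and use that for $\theta$ generic with respect to Corollary~\ref{CE11} the $n$-th visit to $V^{(1)}$ occurs at time $s_n$ with $n/s_n\to\sigma$, so that $s_n/n\to 1/\sigma=\beta$; the error term $\log C/n$ disappears in the limit.

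The heart of the matter is thus the two-sided bound $N^{(n)}_\alpha\asymp e^{s_n}$, which I would prove in two steps. First, I would pass from the discrete count $N^{(n)}_\alpha$ to the continuous return time $T^{(n)}_\alpha$ of the vertical flow of $S_\theta$ to the sub-interval $I^{(n)}(S_\theta)$: a vertical trajectory issued from a point $y\in I^{(n)}_\alpha(S_\theta)$ meets the full interval $I(S_\theta)$ exactly $N^{(n)}_\alpha$ times before it returns to $I^{(n)}(S_\theta)$, and the flow time between two consecutive hits is one of the finitely many roof values $\tau_\beta(S_\theta)$ of the zippered rectangle structure of $S_\theta$; since $S_\theta\in V^{(1)}$ all of these lie in $[C_1^{-1},C_1]$. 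Summing the roof values along the orbit segment gives $C_1^{-1} N^{(n)}_\alpha\le T^{(n)}_\alpha\le C_1 N^{(n)}_\alpha$. Second, I would compute $T^{(n)}_\alpha$ from the renormalized surface. Since $S^{(n)}_\theta=g_{s_n}\Gamma^{(n)}(S_\theta)\,S_\theta$ in the marked Teichmüller space, the flat surface $g_{-s_n}S^{(n)}_\theta$ is isometric to $S_\theta$ through $\Gamma^{(n)}(S_\theta)$, and under this identification the distinguished interval of $S^{(n)}_\theta$ is carried onto $I^{(n)}(S_\theta)=e^{-s_n}I(S_\theta)$ with its subdivision into the pieces $I^{(n)}_\alpha(S_\theta)=e^{-s_n}I_\alpha(S^{(n)}_\theta)$. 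As $g_{-s_n}$ dilates the vertical direction by $e^{s_n}$, the return time of the vertical flow to $I^{(n)}(S_\theta)$ is constant on each piece and equals $T^{(n)}_\alpha=e^{s_n}\tau_\alpha(S^{(n)}_\theta)$; equivalently, the rectangle of width $\lambda_\alpha(S^{(n)}_\theta)$ and height $\tau_\alpha(S^{(n)}_\theta)$ of $S^{(n)}_\theta$ corresponds, under the area-preserving map $\Gamma^{(n)}(S_\theta)^{-1}g_{-s_n}$, to the Kakutani tower over $I^{(n)}_\alpha(S_\theta)$, of width $e^{-s_n}\lambda_\alpha(S^{(n)}_\theta)$, and equating areas yields the same formula. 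Since $S^{(n)}_\theta\in V^{(1)}$ we have $\tau_\alpha(S^{(n)}_\theta)\in[C_1^{-1},C_1]$, hence $C_1^{-1}e^{s_n}\le T^{(n)}_\alpha\le C_1 e^{s_n}$; combining with the first step gives the desired bound with $C=C_1^2$.

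I expect the only genuinely delicate point to be the identification in the second step: one must check carefully that renormalizing $S_\theta$ by the pair $(g_{s_n},\Gamma^{(n)}(S_\theta))$ amounts, up to the rescaling $g_{-s_n}$ and the change of marking $\Gamma^{(n)}(S_\theta)$, to inducing the vertical flow of $S_\theta$ on the sub-interval $I^{(n)}(S_\theta)$, so that the heights of the renormalized zippered rectangles become precisely the return times $T^{(n)}_\alpha$. This is a standard feature of Veech's zippered rectangle construction and of Rauzy--Zorich induction, and is essentially the content of the corresponding statement in~\cite{zorich1999wind}, but the bookkeeping of intervals, towers and areas has to be done with care. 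Everything else is elementary, and the uniformity of all constants used above is guaranteed by the relative compactness of the cross-section, i.e. by the bounds $C_1^{-1}\le\lambda_\alpha,\tau_\alpha\le C_1$ on $V^{(1)}$.
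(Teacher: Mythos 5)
The paper offers no proof of Lemma~\ref{lemzo1}: it is attributed to~\cite{zorich1999wind} and used as a black box. Your argument is a correct self-contained proof, and it is the natural one: it is exactly the Veech/Rauzy--Zorich renormalization picture that the paper's choice of cross-section is built to encode. Let me confirm the two main steps.

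The passage from the discrete count $N^{(n)}_\alpha$ to the continuous return time $T^{(n)}_\alpha$ is sound: by definition $N^{(n)}_\alpha$ is the number of $T_{S_\theta}$-iterates before the orbit of $y\in I^{(n)}_\alpha(S_\theta)$ first re-enters $I^{(n)}(S_\theta)$, so $T^{(n)}_\alpha$ is a sum of $N^{(n)}_\alpha$ roof values $\tau_\beta(S_\theta)$, each in $[C_1^{-1},C_1]$ because $S_\theta\in V^{(1)}$. The identity $T^{(n)}_\alpha=e^{s_n}\tau_\alpha(S^{(n)}_\theta)$ is likewise correct: $g_{-s_n}S^{(n)}_\theta=\Gamma^{(n)}(S_\theta)S_\theta$ so the two surfaces are the \emph{same} flat surface up to remarking, $g_{-s_n}$ sends the unit-length segment $I(S^{(n)}_\theta)$ (issued from the marked point) to $e^{-s_n}I(S_\theta)=I^{(n)}(S_\theta)$, and since $g_{-s_n}$ dilates the vertical direction by $e^{s_n}$ while preserving vertical leaves, the first return times of the vertical flow rescale exactly by $e^{s_n}$. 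This is the same correspondence the paper itself invokes when it writes $C_{N^{(n)}_\alpha}(S_\theta,e^{-s_n}y)=\Gamma^{(n)}(S_\theta)^*h_\alpha$. Combined with $\tau_\alpha(S^{(n)}_\theta)\in[C_1^{-1},C_1]$ (valid since $S^{(n)}_\theta\in V^{(1)}$), you get the uniform two-sided bound $C_1^{-2}e^{s_n}\le N^{(n)}_\alpha\le C_1^2 e^{s_n}$, which is in fact a sharper and cleaner statement than the lemma requires. Finally, $s_n/n\to\beta$ is precisely the content of Corollary~\ref{CE11}: crossings of the transversal $V^{(1)}$ are separated by at least $2\delta$, so the count $\#\{u\le s: g_u r_\theta S'\in V\}$ is finite and equals $n$ up to one unit when $s\in[s_n,s_{n+1})$, whence $n/s_n\to\sigma$.

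One small remark worth making explicit if you write this up: the identification of the image of $I(S^{(n)}_\theta)$ with the nested sub-segment $e^{-s_n}I(S_\theta)$ uses that the distinguished segment in the zippered-rectangles chart always emanates eastward from the \emph{marked point}, which the Teichm\"uller flow and the remarking $\Gamma^{(n)}$ both fix; this is what guarantees the segments are genuinely nested rather than merely of the right length. Otherwise your argument is complete, uniform in $\alpha$ as required, and avoids invoking the Kontsevich--Zorich cocycle altogether for this step, which is a pleasant economy over simply citing Zorich.
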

We can see that there is a sequence of strictly positive integer $(a(\alpha))_\alpha$ independent of $n$, with 
\begin{equation*}
    \n_n=\sum_\alpha a(\alpha) N_\alpha^{(n)}(S_\theta)~~~~~~\sum_\alpha a(\alpha)=l,
\end{equation*}
and we can use this to obtain 
\begin{equation*}
  l \max_\alpha N_\alpha^{(n)} \le \n_n\le l \min_\alpha N_\alpha^{(n)}.
\end{equation*}
And finally, the lemma gives
\begin{equation*}
    \lim_{n \rightarrow +\infty} \frac{\log \n_n}{n}=\beta,
\end{equation*}
and then we have 
\begin{equation*}
    \lim_{n \rightarrow +\infty} \frac{\log  \| \Gamma^{(n)}(S_\theta)_* f  \|_1}{\log \n_n}=\Lambda f.
\end{equation*}
\end{proof}
\subsection*{Average of the diffusion}
\label{paragraph_average}
Here we prove theorem \ref{SB}. We use the results of A. Zorich \cite{zorich1999wind} in order to establish a uniform upper bound. In a second time, we use the proposition \ref{LDC} and the uniform upper bound to give a lower bound, and this is enough to prove the proposition \ref{SB}.

\textbf{Uniform upper bound:}
For the last section, we need a uniform upper bound, which can be obtained using the same techniques as in the Zorich paper's \cite{zorich1999wind}. We do not give proof of this result here.
\begin{prop}
\label{upbound1}
For almost all $\theta$, for all cocycle $f$ such that $\Lambda f>0$, and for all $\epsilon>0$, there exists $N_0$ such that
\begin{equation*}
    \frac{\log|\langle f , C_\n(S_\theta,x)\rangle|}{\log n} \le \Lambda(f)+\epsilon,~~~\forall \n \ge N_0.
\end{equation*}
The bound is true for all $x\in I^*(S_\theta)$ (i.e. points with an infinite future orbit).
\end{prop}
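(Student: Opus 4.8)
The plan is to deduce this from a version of A.~Zorich's deviation estimates \cite{zorich1997deviation,zorich1999wind}, adapted to the cross section $V^{(1)}$ and arranged so that the threshold $N_{0}$ comes out independent of the point~$x$. First I would fix an angle $\theta$ generic for Corollary~\ref{CE11}, Lemma~\ref{lemzo1} and the Oseledets theorem (Theorem~\ref{CE2}) --- a full-measure set --- and, for large $\n$, let $m=m(\n)$ be the largest index with $e^{s_{m}}\le\n$. From $s_{j}/j\to\beta$ and $\log N^{(j)}_{\alpha}/j\to\beta$ one gets $s_{m(\n)}=(1+o(1))\log\n$, $m(\n)=O(\log\n)$, and $s_{j+1}-s_{j}=o(\log\n)$ uniformly for $j\le m(\n)$. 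I will also use repeatedly that $V^{(1)}$ is relatively compact: on it the $\lambda_{\alpha},\tau_{\alpha}$ and the return times to $I(\cdot)$ are bounded above and below, and the Hodge norm is uniformly comparable with the combinatorial norm $\|\cdot\|_{1}$ on the restriction of the Hodge bundle; all constants below depend only on $S'$, the cross section and $\epsilon$.

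\emph{Step 1 (multi-scale decomposition of the return cycle).} The main step is to establish Zorich's Ostrowski-type expansion in this setting: for every $x\in I^{*}(S_{\theta})$ and every $\n$,
\[
C_{\n}(S_{\theta},x)=\sum_{j=0}^{m(\n)}(\Gamma^{(j)})^{*}c_{j},
\]
where each $c_{j}$ (depending on $x$ and $\n$) is a cycle carried by the section surface $S^{(j)}_{\theta}$, controlled by the digit at step $j$: $\|c_{j}\|_{1}\le D_{j}$, with $D_{j}$ bounded by $e^{C_{0}(s_{j+1}-s_{j})}$, which is $\n^{o(1)}$ for $j\le m(\n)$. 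One reads this off the orbit $x,Tx,\dots,T^{\n-1}x$ through the tower of first-return maps on the nested intervals $I(S^{(0)}_{\theta})\supset I(S^{(1)}_{\theta})\supset\cdots$: the excursions between consecutive visits to $I(S^{(j)}_{\theta})$ are the return cycles at scale $j$, and their number inside one excursion at scale $j+1$ is bounded by the length of the $j$-th Rauzy--Veech word, i.e.\ by $e^{C_{0}(s_{j+1}-s_{j})}$. The construction is purely combinatorial in the orbit, hence valid for \emph{every} $x\in I^{*}(S_{\theta})$ with the \emph{same} digit bounds; the details are those of \cite{zorich1997deviation,zorich1999wind}. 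This is where the relative compactness of $V^{(1)}$ and the sublinearity $s_{j+1}-s_{j}=o(\log\n)$ really enter.

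\emph{Steps 2--3 (pairing with $f$ and summation).} For each $j$ the marking change $\Gamma^{(j)}$ is a translation-surface isomorphism $S^{(j)}_{\theta}\to g_{s_{j}}S_{\theta}$, hence a Hodge isometry carrying $f$ to the Kontsevich--Zorich transport $G_{s_{j}}(r_{\theta}S',f)$; therefore
\[
\bigl|\langle f,(\Gamma^{(j)})^{*}c_{j}\rangle\bigr|=\bigl|\langle(\Gamma^{(j)})_{*}f,\,c_{j}\rangle\bigr|\le C\,\|(\Gamma^{(j)})_{*}f\|_{h}\,\|c_{j}\|_{1}\le C\,D_{j}\,\|G_{s_{j}}(r_{\theta}S',f)\|_{h}.
\]
Since $r_{\theta}S'$ is Oseledets-generic (Theorem~\ref{CE2}) and $\Lambda(f)$ is its forward exponent, there is $j_{0}=j_{0}(\epsilon,\theta)$ with $\|G_{s_{j}}(r_{\theta}S',f)\|_{h}\le e^{(\Lambda(f)+\epsilon)s_{j}}$ for $j\ge j_{0}$, while for the finitely many $j<j_{0}$ Forni's inequality bounds this by a constant. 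As $s_{j}\le s_{m(\n)}\le\log\n$ and $\Lambda(f)+\epsilon\ge0$, this gives $\|G_{s_{j}}(r_{\theta}S',f)\|_{h}\le C_{1}\n^{\Lambda(f)+\epsilon}$ for all $j\le m(\n)$, and summing over the $m(\n)+1=O(\log\n)$ scales,
\[
|\langle f,C_{\n}(S_{\theta},x)\rangle|\le(m(\n)+1)\,C_{2}\,\n^{o(1)}\,\n^{\Lambda(f)+\epsilon}\le\n^{\Lambda(f)+2\epsilon}
\]
for all $\n\ge N_{0}(\epsilon,\theta)$ and all $x\in I^{*}(S_{\theta})$. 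As every estimate is uniform in $x$, the threshold $N_{0}$ does not depend on $x$, and replacing $\epsilon$ by $\epsilon/2$ finishes the proof; note $\Lambda(f)>0$ is not used (for $\Lambda(f)=0$ the same argument yields a subpolynomial bound). The hard part is Step~1 --- producing the scale-by-scale decomposition with digit bounds that are uniform in the starting point and subpolynomial in $\n$; Steps~2--3 are soft, relying only on the Oseledets genericity of $r_{\theta}S'$ and the linear growth of~$s_{j}$.
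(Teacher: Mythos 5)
The paper does not actually prove this proposition: it simply states that the uniform upper bound ``can be obtained using the same techniques as in the Zorich paper'' and cites~\cite{zorich1999wind}, with the sentence ``we do not give proof of this result here.'' So there is no paper proof to compare against line by line; what you have written is an attempt to supply the missing argument.

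Your outline is a faithful rendering of the Zorich deviation estimate adapted to the cross section $V^{(1)}$, and the global structure (multi-scale decomposition of $C_{\n}(S_\theta,x)$ into Rauzy--Veech excursions, pairing each scale with the transported class via the Hodge/combinatorial norm comparison on the compact cross section, Oseledets for the subsequence $s_j$, and summation over $m(\n)=O(\log\n)$ scales) is the correct route. The sublinearity argument for $\max_{j\le m(\n)}(s_{j+1}-s_j)=o(\log\n)$ from $s_j/j\to\beta$ is sound, and your closing remark that $\Lambda(f)>0$ is not needed is correct. The one place where the proposal is doing essentially the same thing the paper does --- namely, pointing at Zorich rather than proving --- is Step~1: you assert the multi-scale decomposition with digit bounds $\|c_j\|_1\le e^{C_0(s_{j+1}-s_j)}$ uniform in $x$ and valid for the particular cross section of this paper, but you do not prove it. That is precisely the technical content of Zorich's argument, and it is nontrivial here because $V^{(1)}$ is not literally the Rauzy--Veech section; one must check that the excursion counts between consecutive visits to $I(S^{(j)}_\theta)$ and $I(S^{(j+1)}_\theta)$ are controlled by $\max_\alpha N^{(j+1)}_\alpha/\min_\alpha N^{(j)}_\alpha$, and that this ratio is bounded by $Ce^{s_{j+1}-s_j}$ using the uniform bounds on $(\lambda,\tau)$ in $U$. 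If you supply that verification, your sketch becomes a complete proof; as written it is a correct and clean reduction of Proposition~\ref{upbound1} to Zorich's expansion lemma, which is already more than the paper offers.
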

The important point in this proposition is the fact that the bound is uniform on the segment $I^*(S_\theta)$, we will use it in the next section. We can obtain the following corollary by performing a summation of the last inequality.
\begin{cor}
\label{upbound2}
Let $\theta$ and $f$ that satisfy the statement of the last proposition. Then, for all $\epsilon>0$, there exists $N_0$ such that
\begin{equation*}
\sum_{k\le \n} |\langle f, C_k(S_\theta,x)\rangle| \le  \exp\left ( \left(\Lambda(f)+1+\epsilon\right) \log \n \right),~~~\forall \n \ge N_0,
\end{equation*}
and the bound is true for all $x\in I^*(S_\theta)$.
\end{cor}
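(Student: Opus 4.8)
The plan is to derive Corollary~\ref{upbound2} from Proposition~\ref{upbound1} by a straightforward summation argument, being careful to absorb the finitely many ``bad'' terms $\n < N_0$ into the exponent at the cost of enlarging $\n$. First I would fix $\theta$ and a cocycle $f$ with $\Lambda(f)>0$ satisfying the conclusion of Proposition~\ref{upbound1}, and fix $\epsilon>0$. Replacing $\epsilon$ by $\epsilon/2$ in that proposition produces an $N_0=N_0(\epsilon/2)$ such that $|\langle f, C_k(S_\theta,x)\rangle| \le k^{\Lambda(f)+\epsilon/2} = \exp\bigl((\Lambda(f)+\epsilon/2)\log k\bigr)$ for every $k\ge N_0$ and every $x\in I^*(S_\theta)$; crucially this $N_0$ does not depend on $x$ because the bound in Proposition~\ref{upbound1} is uniform on $I^*(S_\theta)$.

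Next I would split the sum $\sum_{k\le \n}|\langle f,C_k(S_\theta,x)\rangle|$ into the head $\sum_{k< N_0}$ and the tail $\sum_{N_0\le k\le \n}$. For the head, note that each $C_k(S_\theta,x)$ satisfies $\|C_k(S_\theta,x)\|_1 = k$ (this identity appears in the proof of Proposition~\ref{LDC}), so by comparing the $\|\cdot\|_1$ norm with the Hodge norm on the relatively compact cross section and using that $f$ has bounded Hodge norm, each term $|\langle f,C_k(S_\theta,x)\rangle|$ is at most a constant times $k$; hence the whole head is bounded by a constant $M=M(N_0,f)$ independent of $x$ and of $\n$. For the tail, the uniform bound gives
\begin{equation*}
\sum_{N_0\le k\le \n} |\langle f, C_k(S_\theta,x)\rangle| \le \sum_{k\le \n} k^{\Lambda(f)+\epsilon/2} \le \n \cdot \n^{\Lambda(f)+\epsilon/2} = \n^{\Lambda(f)+1+\epsilon/2}.
\end{equation*}
Adding the head, we get $\sum_{k\le\n}|\langle f,C_k(S_\theta,x)\rangle| \le M + \n^{\Lambda(f)+1+\epsilon/2}$, and since $\Lambda(f)+1+\epsilon/2 \ge 1 > 0$ the power term dominates: there is $N_1\ge N_0$ such that $M \le \n^{\Lambda(f)+1+\epsilon/2}$ for $\n\ge N_1$, whence $\sum_{k\le\n}|\langle f,C_k(S_\theta,x)\rangle| \le 2\,\n^{\Lambda(f)+1+\epsilon/2} \le \n^{\Lambda(f)+1+\epsilon} = \exp\bigl((\Lambda(f)+1+\epsilon)\log\n\bigr)$ for all $\n$ large enough. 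Renaming this threshold $N_0$ yields the statement.

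There is no serious obstacle here; this is a routine ``summing a polynomial upper bound'' computation. The only point requiring a little care is the uniformity in $x\in I^*(S_\theta)$: one must check that the threshold $N_0$ produced at the end still does not depend on $x$, which holds because the constant $M$ bounding the head depends only on $N_0$ and the geometry of the cross section (through the comparison of $\|\cdot\|_1$ and the Hodge norm), not on $x$, and the final absorption threshold $N_1$ depends only on $M$ and $\epsilon$. So the whole argument preserves the uniformity already present in Proposition~\ref{upbound1}.
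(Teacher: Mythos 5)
Your proposal is correct and follows essentially the same route as the paper: split the sum at the threshold $N_0$ from Proposition~\ref{upbound1} (applied with $\epsilon/2$), bound the finite head by a constant that is uniform in $x$ (the paper uses the cocycle identity $|\langle f,C_k\rangle|\le k\max_\alpha|\langle f,h_\alpha\rangle|$, which is the same estimate you derive from $\|C_k\|_1=k$ and a norm comparison), bound the tail by a polynomial sum of order $\n^{\Lambda(f)+1+\epsilon/2}$, and absorb the constant and the slack into the extra $\epsilon/2$ for $\n$ large. The paper uses an integral comparison for the tail where you use the cruder bound $\n\cdot\n^{\Lambda(f)+\epsilon/2}$, but both give the same conclusion.
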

\begin{proof}
    Let $\theta$ satisfying the assumption of the last proposition. For all $\epsilon>0$, according to the last proposition, there is $N_0\ge 0$ such as: for all $\n\ge N_0$ and all $x\in I^*(S_\theta)$ we have 
    \begin{equation*}
        |\langle f, C_\n(S_\theta,x)\rangle|\le \n^{\Lambda f +\frac{\epsilon}{2}}
    \end{equation*}
Moreover, by using the fact that $C_\n$ is an additive cocycle we have the trivial bound:
    \begin{equation*}
        |\langle f, C_\n(S_\theta,x)\rangle|\le \n\max_\alpha |\langle f,h_\alpha\rangle|.
    \end{equation*}
    We can write, for all $\n\ge N_0$
    \begin{eqnarray*}
       \sum_{k\le \n} |\langle f, C_k(S_\theta,x)\rangle| &\le& \sum_{k<N_0} |\langle f, C_k(S_\theta,x)\rangle|  + \sum_{N_0\le k\le \n} |\langle f, C_k(S_\theta,x)\rangle|  \\
       &\le& N^2_0\max_\alpha |\langle f,h_\alpha\rangle|+ \sum_{k\le \n}k^{\Lambda f +\frac{\epsilon}{2}}
    \end{eqnarray*}
    Using comparison series vs integral, we have 
    \begin{equation*}
        \sum_{k\le \n}k^{\Lambda f +\frac{\epsilon}{2}}\le \int_0^{\n+1} x^{\Lambda f +\frac{\epsilon}{2}}dx\le \frac{(\n+1)^{\Lambda f +\frac{\epsilon}{2}+1}}{\Lambda f +\frac{\epsilon}{2}+1}.
    \end{equation*}
    Then we can find two constants $A,B$ (depending of $\epsilon$ only ) such as, for all $\n\ge N_0$, we have 
    \begin{equation*}
        \sum_{k\le \n} |\langle f, C_k(S_\theta,x)\rangle|\le A+B~ \n^{\Lambda f+1+\frac{\epsilon}{2}}\le (A~\n^{-\Lambda f-1-\frac{\epsilon}{2}}+B)~\n^{\Lambda f+1+\frac{\epsilon}{2}}.
    \end{equation*}
If $\n$ goes to $\infty$, the factor $A~\n^{-\Lambda f-1-\frac{\epsilon}{2}}+B$ is bounded, then its smaller than $\n^{\frac{\epsilon}{2}}$ for $\n$ large enought. Then, by increasing $N_0$, we show that: for all $\epsilon$, there is $N_0$ such that, for all $\n\ge N_0$ we have 
    \begin{equation*}
          \sum_{k\le \n} |\langle f, C_k(S_\theta,x)\rangle|\ge \n^{\Lambda f +1 +\epsilon}.
    \end{equation*}
As before, the bound is uniform on $I^*(S_\theta)$.
\end{proof}

\paragraph{Lower bound:}
Now, we need to establish a lower bound for the second statement of proposition \ref{SB}.
\begin{proof} 
First, we exhibit intervals of discrete times on which the sum of $|\langle f,C_k(S_\theta)\rangle|$ is large enough. To do this, we use large excursions (see proposition \ref{LDC}) and the fact that the diffusion is bounded around such excursions (see corollary \ref{upbound2}). The trajectory goes far and stays far enough. In a second, we prove that these intervals can be used to minorate $\frac{\log \sum_{k\le \n}C_k(S_\theta)}{\log \n}$ where $\n$ belongs to some intervals of discrete times. Finally, we show that the union of these intervals contains $N_1+\N$ for $N_1$ large enought.\\

$1)$ For all $p\ge0$, we can write
 \begin{equation*}
\sum_{k\le \n_n+p} |\langle f,C_k(S_\theta)\rangle| \ge \sum_{0\le k\le p} |\langle f,C_{\n_n+k}(S_\theta)\rangle|.
 \end{equation*}
 Using the fact that $C_{\n}(S_\theta,x)$ is an additive cocycle for the interval exchange $T_{S_\theta}$, we have:
 \begin{equation*}
|\langle f,C_{\n_n+k}(S_\theta)\rangle|\ge |\langle f,C_{\n_n}(S_\theta)\rangle|-|\langle f,C_{k}(S_\theta,T_{S_\theta}^{\n_n}x)\rangle|.
 \end{equation*}
Of course, if $x$ has an infinite future orbit, then it is also true for $T_{S_\theta}^{\n_n}x$. We can apply proposition \ref{upbound2} for an $\epsilon$ such that  $0<2\epsilon< \text{min}\{\beta,\Lambda f\}$, we can find $p_\epsilon$ which does not depend on $x,n$ and such that, for all $p\ge p_\epsilon$
\begin{equation*}
\sum_{0\le k\le p}  |\langle f,C_{k}(S_\theta,T_{S_\theta}^{\n_n}x)\rangle| \le \exp\left ((\Lambda f+1+\epsilon)\log p \right).    
\end{equation*}
Using the lower bound of proposition \ref{LDC} and the lower bound on $\n_n$, there is $n_\epsilon$ such that, for all $n\ge n_\epsilon$: 
\begin{eqnarray*}
    \sum_{k\le \n_n+p} |\langle f,C_k(S_\theta,x)\rangle| &\ge& p \exp\left ((\Lambda f-\epsilon)\n_n \right) - \exp\left ((\Lambda f+1+\epsilon)\log p \right)\\
     &\ge& p \exp\left ((\Lambda f-\epsilon)(\beta-\epsilon) n \right) - \exp\left ((\Lambda f+1+\epsilon)\log p \right)\\
      &\ge& p\left ( \exp\left ((\Lambda f-\epsilon)(\beta-\epsilon) n \right) - \exp\left ((\Lambda f+\epsilon)\log p \right)\right).
\end{eqnarray*}
If $p \le 2^{-\frac{1}{\Lambda f+\epsilon}}\exp\left(\frac{\Lambda f-\epsilon}{\Lambda f+\epsilon}(\beta-\epsilon) n \right)$, we then have 
\begin{equation*}
    \exp\left ((\Lambda f-\epsilon)(\beta-\epsilon) n \right) - \exp\left ((\Lambda f+\epsilon)\log p \right)\ge \frac{\exp\left ((\Lambda f-\epsilon)(\beta-\epsilon) n \right) }{2} 
\end{equation*}
Let $p_n$ be the integer part of $\exp(-\frac{\log(2)}{\Lambda f+\epsilon})\exp\left(\frac{\Lambda f-\epsilon}{\Lambda f+\epsilon}(\beta-\epsilon) n \right)$. As $p_n$ goes to $\infty$ when $n$ grows up, there is $n_\epsilon'\ge n_\epsilon$ and $A_2>0$ such that if $n\ge n'_\epsilon$, we have $p_n\ge n_\epsilon$. Then, we have:

\begin{eqnarray*}
      \sum_{k\le \n_n+p_n} |\langle f,C_k(S_\theta)\rangle|  &\ge&  \frac{p_n}{2}\exp \left((\Lambda f- \epsilon ) (\beta-\epsilon) n \right)\\
                &\ge&  \left(2^{-\frac{1}{\Lambda f+\epsilon}} \exp \left( \frac{\Lambda f - \epsilon}{\Lambda f + \epsilon } (\beta-\epsilon) n \right)-1 \right)
    \exp \left(  (\Lambda f- \epsilon ) (\beta-\epsilon) n \right)\\
&\ge& \left(2^{-\frac{1}{\Lambda f+\epsilon}}- \exp \left( -\frac{\Lambda f - \epsilon}{\Lambda f + \epsilon } (\beta-\epsilon) n \right)\right) \exp \left( \left( \Lambda f+\frac{\Lambda f - \epsilon}{\Lambda f + \epsilon}- \epsilon \right) (\beta-\epsilon) n \right).\\
&\ge&A_2 \exp \left( \left( \Lambda f+\frac{\Lambda f - \epsilon}{\Lambda f + \epsilon}- \epsilon \right) (\beta-\epsilon) n \right) .
\end{eqnarray*}
Finally obtain
\begin{eqnarray*}
    \log \sum_{k\le \n_n+p_n} |\langle f,C_k(S_\theta)\rangle| ) &\ge& \log \sum_{\n_n< k\le \n_n+p_n} |\langle f,C_k(S_\theta)\rangle| )\\
&\ge& \left( \Lambda f+\frac{\Lambda f - \epsilon }{\Lambda f + \epsilon }- \epsilon \right) (\beta-\epsilon) n  + \log A_2.
\end{eqnarray*}

$2)$ Using the expression of $p_n$, there exist constants $A_3$ independent of $\epsilon,n$ such that
\begin{equation*}
p_{n}\le  A_3 \exp\left(\frac{\Lambda f - \epsilon }{\Lambda f + \epsilon }(\beta-\epsilon)n\right) \le A_3\exp((\beta+\epsilon)n).
\end{equation*}
If $n$ is large enough, we have obviously
\begin{equation*}
    \frac{\log(A_3 + 1)}{n+1}\le \epsilon,~~~\text{and}~~~\frac{\beta-\epsilon}{n+1}\le \epsilon.
\end{equation*}
And then 
\begin{eqnarray*}
    \frac{(\beta-\epsilon) n}{\log( \n_{n+1}+p_{n+1})}&\ge& \frac{(\beta-\epsilon) n}{\log(A_3 \exp((\beta-\epsilon) (n+1))+\n_{n+1})}\\
    &\ge& \frac{(\beta-\epsilon) n}{\log(A_3 + 1)+ (\beta+\epsilon)(n+1)}\\
    &\ge& \frac{\beta-2\epsilon}{2\epsilon +  \beta}.
\end{eqnarray*}
Then, for all $\n$ such as $ \n_{n}+p_n \le \n$ and $\n \le \n_{n+1}+p_{n+1}$, we have:
\begin{eqnarray*}
     \frac{\log \sum_{k\le \n} |\langle f,C_k(S_\theta)\rangle|}{\log \n}
    &\ge&  \frac{\log \sum_{k\le \n_n+p_n} |\langle f,C_k(S_\theta)\rangle|}{\log( \n_{n+1}+p_{n+1})}\\
    &\ge&\frac{n(\beta-\epsilon)}{\log(\n_{n+1}+p_{n+1})} \left( \Lambda f+\frac{\Lambda f - \epsilon}{\Lambda f + \epsilon }- \epsilon \right)  +\frac{\log A_2}{\log(\n_{n+1}+p_{n+1})} \\
    &\ge&   \frac{\beta -2\epsilon}{2\epsilon +  \beta}  \left( \Lambda(f)+\frac{\Lambda f - \epsilon }{\Lambda f + \epsilon }- \epsilon \right)  + \frac{\log A_2}{\log(\n_{n+1}+p_{n+1})}.
\end{eqnarray*}
As it goes to zeros when $n$ is large, the last term is useless; we can find $n_\epsilon''\ge n_\epsilon'$ such that, for all $n\ge n_\epsilon''$ and for all $\n$ that satisfies 
\begin{equation*}
    \n_{n}+p_{n} \le \n ~\text{and}~ \n \le \n_{n+1}+p_{n+1},
\end{equation*} 
we have 
\begin{equation*}
      \frac{\log \sum_{k\le \n} |\langle f,C_k(S_\theta)\rangle|}{\log \n} \ge  \frac{\beta-2\epsilon}{2\epsilon + \beta}  \left( \Lambda f+\frac{\Lambda f - \epsilon }{\Lambda f + \epsilon }- 2\epsilon \right) .
\end{equation*}
$3)$ It remains to prove that the bound is true for all $\n$ big enough. Let $\n\ge N_\epsilon := \n_{n_\epsilon''}+p_{n_\epsilon''}$, and $n_0$ be the smaller integer bigger than $n_\epsilon'' +1$ such that $\n \le \n_{n_0}+p_{n_0}$. We must have $\n \ge  \n_{n_0 -1}+p_{n_0 -1}$, and then the last inequality is true for this $\n$. Then it's true  for all $\n \ge N_\epsilon$, and we can conclude that 
\begin{equation*}
    \liminf_\n  \frac{\log \sum_{k\le \n} |\langle f,C_k(S_\theta)\rangle|}{\log \n}  \ge  \frac{\beta  
    -2\epsilon}{2\epsilon +  \beta}  \left( \Lambda f+\frac{\Lambda f - 2\epsilon }{\Lambda f + \epsilon }- 2\epsilon \right).
\end{equation*}
To conclude, the bound is valid for all $\epsilon > 0$ small enough. Finally, when $\epsilon$ goes to zero, we obtain the claim:
\begin{equation*}
    \liminf_\n  \frac{\log \sum_{k\le \n} |\langle f,C_k(S_\theta)\rangle|}{\log \n}  \ge  \Lambda f + 1.
\end{equation*}
Because the LHS tends to $\Lambda f + 1$ when $\epsilon$ goes to $0$.
\end{proof}

\paragraph{End of the proof of theorem \ref{SB}}
We achieve the proof of the theorem \ref{SB}.
\begin{proof}
    Let $\theta$ such as $S'_{\frac{\pi}{2}-\theta}$ is generic for both theorems \ref{CE1} and \ref{CE2}. By using lemma \ref{upbound2}, the lower bound and remark \ref{rem_cocycle} we obtain 
\begin{equation*}
    \lim_{\n\rightarrow \infty }\frac{\log\sum_{k\le \n}\|\langle f,C_k(S'_\theta)\|}{\log(\n)}=\Lambda f+1
\end{equation*}
Then, by using lemma \ref{lem_covering_gathering} we can conclude that 
\begin{equation*}
    \lim_{T\rightarrow \infty }\frac{\log\int_0^T d_{\tilde{S}}(\tilde{x},\tilde{\phi}_t^\theta(\tilde{x})}{\log(T)}=\Lambda f+1.
\end{equation*}
Where $\tilde{\phi}_t(\tilde{x})$ is the flow on $\tilde{S}$ in the direction $\theta$. Then, for all $\tilde{x}$ and almost all $\theta$, the statement of theorem \ref{SB} is true.
\end{proof}

\begin{rem}
    To be correct, we do not really prove the theorem for $r_\theta\cdot \tilde{S}'$ but for $\tilde{S}'_\theta$. But the two differ by the action of the Teichmüller flow, and this does not affect the statement of the theorem.
\end{rem}

Using this theorem and the previous works on billiard \cite{delecroix2014diffusion}, we can deduce a similar statement for the wind tree model. This step is now classical in the theory of billiards.

\appendix
\appendix
\section{Proof of lemma \ref{ie}}
An intervals exchange of $n$ intervals can be defined by data $(\mathcal{A},\pi^t,\pi^b,\lambda)$, where  $\pi^{\cdot}: \mathcal{A}\to \llbracket 1,n\rrbracket$. Assume that the data are irreducible (see \cite{viana2008dynamics}), and we denote $(T_\lambda,I_\lambda)$ the corresponding intervals exchange. As before, let $I^*_\lambda$ be the points in $I_\lambda$ that have an infinite future orbit, i.e., the points that never hit a singularity. We denote by $\iota_\lambda(x)$ the sequence of $(\alpha_n)$ such that $T^n_\lambda(x)\in I^t_{\alpha_n}$, it is well defined on $I^*_\lambda$. It is easy to see that this function is continuous for the product topology, $\iota_\lambda\in \mathcal{A}^\N$.
We denote $\Sigma_\lambda$ the closure of $\iota_\lambda(I^*_\lambda)$ in $\mathcal{A}^\N$.
\begin{lem}
The interval exchange is conjugated to the shift on $\Sigma_\lambda$. We have the following diagram, where $h$ is an almost everywhere homeomorphism:
\begin{equation*}
\xymatrix{
\Sigma_\lambda \ar[r]^\sigma \ar[d]_h & \Sigma_\lambda \ar[d]^h\\ 
I_\lambda \ar[r]^T &I_\lambda}
\end{equation*}
\end{lem}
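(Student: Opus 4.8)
\emph{Plan of proof.} I would build $h$ as the inverse of the coding map $\iota_\lambda$ extended continuously to the closure $\Sigma_\lambda$; the work is to produce this inverse directly from the symbolic data and to identify the full-measure subsets on which it is a genuine homeomorphism intertwining $\sigma$ and $T_\lambda$. For $\underline\alpha=(\alpha_n)_{n\ge 0}\in\mathcal A^\N$ and $N\ge 0$ introduce the cylinder
\[
J_N(\underline\alpha)=\{x\in I_\lambda:\ T_\lambda^j x\in I^t_{\alpha_j}\ \text{for}\ 0\le j\le N\}=I^t_{\alpha_0}\cap T_\lambda^{-1}I^t_{\alpha_1}\cap\cdots\cap T_\lambda^{-N}I^t_{\alpha_N}.
\]
Building this set up from the inside, $J_N(\underline\alpha)$ is an interval (possibly empty): intersecting with $I^t_{\alpha_j}$, on which $T_\lambda$ is a single translation, kills all but one branch of each preimage, so at every step one intersects an interval with the preimage of an interval under a translation and gets an interval. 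The set $\{\underline\alpha:\ J_N(\underline\alpha)\neq\emptyset\}$ depends only on $\alpha_0,\dots,\alpha_N$, hence is clopen; so $\{\underline\alpha:\ J_N(\underline\alpha)\neq\emptyset\ \forall N\}$ is closed, and since $x\in J_N(\iota_\lambda x)$ for every $x\in I^*_\lambda$ it contains $\iota_\lambda(I^*_\lambda)$, hence all of $\Sigma_\lambda$. For $\underline\alpha\in\Sigma_\lambda$ the closures $\overline{J_N(\underline\alpha)}$ decrease to a nonempty compact interval $K(\underline\alpha)=\bigcap_N\overline{J_N(\underline\alpha)}$, and I define $h(\underline\alpha)$ to be its left endpoint.

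Equivariance and the one-sided identities come next. The inclusion $T_\lambda(I^*_\lambda)\subseteq I^*_\lambda$ together with $\iota_\lambda\circ T_\lambda=\sigma\circ\iota_\lambda$ on $I^*_\lambda$ shows $\Sigma_\lambda$ is $\sigma$-invariant. If $x\in K(\underline\alpha)\cap I^*_\lambda$ then $\iota_\lambda x$ agrees with $\underline\alpha$ in every coordinate, so $\iota_\lambda x=\underline\alpha$; hence $h\circ\iota_\lambda=\mathrm{id}$ on $I^*_\lambda$, the image of $h$ contains the co-countable set $I^*_\lambda$, and whenever $h(\underline\alpha)\in I^*_\lambda$ one has directly $T_\lambda h(\underline\alpha)=T_\lambda x=h(\sigma\underline\alpha)$. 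The identity $T_\lambda\big(\overline{J_N(\underline\alpha)}\big)=\overline{J_{N-1}(\sigma\underline\alpha)}$, valid off the finitely many discontinuities of $T_\lambda$, propagates $h\circ\sigma=T_\lambda\circ h$ to all of $\Sigma_\lambda$ minus a countable set.

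Then I would isolate the only defect. A point $\underline\alpha$ with $K(\underline\alpha)$ a nondegenerate interval is exactly a nondegenerate subinterval of $I_\lambda$ that never meets a discontinuity under forward iteration, so that $T_\lambda^n$ acts on it as a translation for every $n$; since $J_N(\underline\alpha)\cap J_N(\underline\beta)=\emptyset$ as soon as $\alpha_N\neq\beta_N$, distinct such $\underline\alpha$ have $K(\underline\alpha)$ with pairwise disjoint interiors, so there are at most countably many of them — call this countable set $D$. Let $N_0\subset I_\lambda$ be the countable union of the forward orbits of the discontinuity points, put $I_\lambda'=I^*_\lambda\setminus\bigcup_{n\ge 0}T_\lambda^{-n}\big(h(D)\big)$ and $\Sigma_\lambda'=\iota_\lambda(I_\lambda')$. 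Then $I_\lambda'$ is $T_\lambda$-invariant of full Lebesgue measure and $\Sigma_\lambda'$ is $\sigma$-invariant; on $\Sigma_\lambda'$ one has $K(\underline\alpha)=\{h(\underline\alpha)\}=\{\iota_\lambda^{-1}(\underline\alpha)\}$, the diameters $\overline{J_N(\underline\alpha)}\to 0$ make $h$ continuous with continuous inverse $\iota_\lambda$, and $h\circ\sigma=T_\lambda\circ h$. This is the asserted commuting square with $h$ an almost-everywhere homeomorphism. If moreover the data satisfy Keane's condition \cite{keane1975interval} — in particular when the interval exchange, equivalently the vertical flow, is minimal — then $D=\emptyset$, $\iota_\lambda$ is injective, and $h$ is a homeomorphism between $\iota_\lambda(I^*_\lambda)$ and $I^*_\lambda$; the residual non-injectivity of $h$ on all of $\Sigma_\lambda$ then comes only from the countably many sequences coding points whose orbit eventually hits a singularity.

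The main obstacle is this last step: excluding, up to a null set, the non-injectivity of the coding — i.e. checking that the set $D$ of symbolic sequences with non-shrinking cylinders is countable — and the bookkeeping needed to carve out honest $\sigma$- and $T_\lambda$-invariant full-measure subsets carried onto one another by $h$. Under a minimality hypothesis this is clean because no nondegenerate $K(\underline\alpha)$ can occur (such an interval would force an invariant proper open set); in the general irreducible setting one must argue through the disjointness of the corresponding intervals. Genuine continuity of $h$ at the countably many points of $D$ can fail with any fixed endpoint convention, which is exactly why the statement asserts only an almost-everywhere homeomorphism rather than a topological conjugacy of the full systems.
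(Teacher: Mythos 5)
The paper actually states this lemma without proof---the proof that follows it in the appendix is for the first statement of Lemma~\ref{ie}, not for the conjugacy itself---so you are filling a genuine gap. Your construction of $h$ as the left endpoint of the nested intersection $K(\underline\alpha)=\bigcap_N\overline{J_N(\underline\alpha)}$ is the natural one, and the compactness, equivariance, and ``$\Sigma_\lambda$ is $\sigma$-invariant'' steps are fine.

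There is, however, a gap in the bookkeeping step that you yourself flag as ``the main obstacle.'' The intermediate claim ``hence $h\circ\iota_\lambda=\mathrm{id}$ on $I^*_\lambda$'' does not follow from what you just proved: you showed that $x\in K(\underline\alpha)\cap I^*_\lambda$ implies $\iota_\lambda x=\underline\alpha$, but $h(\underline\alpha)$ is defined as the \emph{left endpoint} of $K(\underline\alpha)$, which need not equal $x$ when $K(\underline\alpha)$ is nondegenerate. Later you try to repair this by setting $I_\lambda'=I^*_\lambda\setminus\bigcup_{n\ge0}T_\lambda^{-n}(h(D))$, but this removes only (preimages of) the countably many \emph{left endpoints} of the bad intervals, not the bad intervals themselves; an interior point $x$ of some nondegenerate $K(\underline\alpha)$ survives into $I_\lambda'$ and still has $h(\iota_\lambda x)\neq x$. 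The set one must excise is $\bigcup_{\underline\alpha\in D}K(\underline\alpha)$. For a merely irreducible (non-minimal) IET this union can carry positive Lebesgue measure---think of a periodic component, which is exactly a nondegenerate interval on which every iterate of $T_\lambda$ is a translation---so not only is your $I_\lambda'$ the wrong set, but removing the right set may break the ``full measure'' requirement, and the lemma as stated becomes delicate (the $d$ in the diagram must then be read as ``off the exceptional $D$-intervals,'' not ``Lebesgue-a.e.\ on $I_\lambda$''). (The set $N_0$ you introduce is never used.) None of this is fatal for the application: the paper only invokes the lemma for \emph{minimal} IETs, where, as you correctly observe, $D=\emptyset$, the cylinders shrink to points, $\iota_\lambda$ is injective on $I^*_\lambda$, and your argument closes cleanly. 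A tighter write-up would either state the lemma under Keane's condition / minimality, or, in the irreducible case, carve $I_\lambda'$ out of $I^*_\lambda\setminus\bigcup_{\underline\alpha\in D}K(\underline\alpha)$ and say explicitly what ``almost everywhere'' means on each side of the diagram.
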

By using this lemma, we can prove the first statement of lemma \ref{ie}.
\begin{proof}
If $T_\lambda$ is a minimal interval exchange, then for all $x\in I^*_\lambda$, the future orbit of $x$ is dense in $I_\lambda$. And then the function
\begin{equation*}
    m_\alpha(x) = \min \{ n, T^n_\lambda x \in I_{\lambda,\alpha} \},
\end{equation*}
is well defined on $I^*_\lambda$. It is easy to check that the function
\begin{equation*}
    \tilde{m}_\alpha(u) = \min \{ n, \alpha_n =\alpha \}
\end{equation*}
is a continuous function on $\Sigma_\lambda$ and also $m_\alpha(x)=\tilde{m}_\alpha(\iota_\lambda(x))$. So, as $\Sigma_\lambda$ is compact, then $\tilde{m}_\alpha$ is bounded on $\Sigma_\lambda$, and then $\max_\alpha m_\alpha$ is bounded on $I^*_\lambda$.
\end{proof}
Now we prove the second part; more precisely, we will prove the following lemma:
\begin{lem}
For all intervals exchange $T_{\lambda_0}$ with no connection, for all $m$, there exists a neighborhood $V$ of $\lambda_0$ such that
\begin{equation*}
    \iota_\lambda(0)_n= \iota_{\lambda_0}(0)_m~~~~~~\forall n<m ~,~\forall \lambda \in V
\end{equation*}
\end{lem}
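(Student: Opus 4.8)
The plan is to exploit that $T_\lambda$ is a piecewise translation whose breakpoints and translation vectors depend linearly on the length data $\lambda$, and to feed this into a finite shadowing argument powered by the no‑connection hypothesis; in contrast with the first part of Lemma~\ref{ie}, which rested on minimality and on compactness of $\Sigma_\lambda$, the present statement is purely local and involves only finitely many iterates. Write $T_\lambda$ through its top subdivision points $0=u_0(\lambda)<u_1(\lambda)<\dots<u_n(\lambda)=|\lambda|$, so that $I^t_\alpha(\lambda)=[u_{\pi^t(\alpha)-1}(\lambda),u_{\pi^t(\alpha)}(\lambda))$ and $T_\lambda$ acts on $I^t_\alpha(\lambda)$ as translation by a vector $w_\alpha(\lambda)$; both $u_j(\lambda)$ and $w_\alpha(\lambda)$ are partial sums of the coordinates of $\lambda$, hence Lipschitz in $\lambda$. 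Recall that $\iota_\lambda(0)_k=\alpha$ precisely when $T^k_\lambda(0)\in I^t_\alpha(\lambda)$.

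Fix $m$ and put $x_k:=T^k_{\lambda_0}(0)$ for $0\le k<m$; since $T_{\lambda_0}$ has no connection these are well defined, and we let $\alpha_k:=\iota_{\lambda_0}(0)_k$ be the index with $x_k\in I^t_{\alpha_k}(\lambda_0)$. The \emph{no‑connection} hypothesis guarantees that for $1\le k<m$ the point $x_k$ lies in the interior of $I^t_{\alpha_k}(\lambda_0)$, at distance at least $2\delta_0>0$ from every subdivision point $u_j(\lambda_0)$, where $\delta_0$ is half the minimum of these finitely many positive distances. (The point $x_0=0$ is itself a subdivision point, but it lies in the first top interval in $\pi^t$-order for every $\lambda$, so it needs no margin.) By continuity of $\lambda\mapsto u_j(\lambda)$ and $\lambda\mapsto w_\alpha(\lambda)$ and finiteness of $\mathcal A$, choose a neighborhood $V$ of $\lambda_0$ with
\[
|u_j(\lambda)-u_j(\lambda_0)|<\tfrac{\delta_0}{2}\ \ (\forall j),\qquad
|w_\alpha(\lambda)-w_\alpha(\lambda_0)|<\tfrac{\delta_0}{2m}\ \ (\forall \alpha\in\mathcal A),\qquad \forall\lambda\in V.
\]

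The heart of the matter is an induction on $k$, $0\le k<m$: for every $\lambda\in V$ one has $T^k_\lambda(0)\in I^t_{\alpha_k}(\lambda)$ and $|T^k_\lambda(0)-x_k|<k\,\delta_0/(2m)$. The case $k=0$ is immediate. Assuming it for $k$: when $k\ge1$ we have $|T^k_\lambda(0)-x_k|\le\delta_0/2$, the endpoints of $I^t_{\alpha_k}(\lambda)$ lie within $\delta_0/2$ of those of $I^t_{\alpha_k}(\lambda_0)$, and $x_k$ is at distance at least $2\delta_0$ from the latter, so $T^k_\lambda(0)$ stays at distance at least $\delta_0$ from the endpoints of $I^t_{\alpha_k}(\lambda)$, in particular in its interior (for $k=0$ this holds trivially). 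Hence $T^{k+1}_\lambda(0)=T^k_\lambda(0)+w_{\alpha_k}(\lambda)$ while $x_{k+1}=x_k+w_{\alpha_k}(\lambda_0)$, whence
\[
|T^{k+1}_\lambda(0)-x_{k+1}|\le|T^k_\lambda(0)-x_k|+|w_{\alpha_k}(\lambda)-w_{\alpha_k}(\lambda_0)|<(k+1)\,\tfrac{\delta_0}{2m},
\]
and the same margin estimate, now around $x_{k+1}$, places $T^{k+1}_\lambda(0)$ in the interior of $I^t_{\alpha_{k+1}}(\lambda)$. Running the induction to $k=m-1$ gives $\iota_\lambda(0)_k=\alpha_k=\iota_{\lambda_0}(0)_k$ for all $k<m$ and all $\lambda\in V$, which is the assertion.

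The only substantive use of the no‑connection hypothesis is the strict positivity of $\delta_0$; apart from that the argument needs nothing more than the linear (hence continuous) dependence of the $u_j$ and $w_\alpha$ on $\lambda$ together with the finiteness of both the number of iterates and the alphabet, so no uniformity is required. The sole point calling for a little care — the mild obstacle — is bookkeeping the two independent sources of error, the drift of the orbit point $T^k_\lambda(0)$ away from $x_k$ and the motion of the interval endpoints $u_j(\lambda)$, and arranging that their cumulative effect never exhausts the safety margin $2\delta_0$: allotting a budget of $\delta_0/(2m)$ per step to the former and a flat $\delta_0/2$ to the latter does exactly this.
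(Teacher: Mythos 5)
Your proof is correct and follows essentially the same strategy as the paper's: exploit the linear (hence continuous) dependence of the subdivision points and translation amounts on $\lambda$, together with the strictly positive distance from $T^k_{\lambda_0}(0)$ to the discontinuities supplied by the no-connection hypothesis, and propagate this through finitely many iterates by induction. The only difference is organizational — you fix $m$, pick $V$ once with explicit error budgets, and induct on the step $k$, whereas the paper inducts on $m$ itself, shrinking neighborhoods and using that $T^{m+1}_\lambda(0)=\sum_{k\le m}\delta_{\alpha_k}(\lambda)$ is a single continuous function on the inductive neighborhood — but these are two renderings of the same argument.
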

\begin{proof}
We will prove this result by induction on $n$, assume that's true for $m$, and let $V$ be the neighborhood of $\lambda$. We need to introduce some notations, i.e., let $\alpha_n=\iota_{\lambda_0}(0)_n$ and $\gamma=\alpha_{m+1}$. By assumption, we can write:
\begin{equation*}
    T_\lambda^{m+1}(y)= \sum_{0\le k \le m} \delta_{\iota_\lambda(y)}(\lambda)=\sum_{0\le k \le m} \delta_{\alpha_k}(\lambda) ~~~~\forall y \in ]0,u[.
\end{equation*}
If $\phi(\lambda)= T_\lambda^{m+1}(0)=\sum_{0\le k \le N} \delta_{\alpha_k}(\lambda)$, where $\delta_{\alpha_k}(\lambda)$ is the translation from $I^t_{\lambda,\alpha}$ to $I^b_{\lambda,\alpha}$. It is defined by the following formula:
\begin{equation*}
    \delta_{\alpha}(\lambda)= \underset{\pi^t(\beta) \ge \pi^t(\alpha)}{\sum} \lambda_\beta - \underset{\pi^b(\beta) \ge \pi^b(\alpha)}{\sum} \lambda_\beta.
\end{equation*}
And then $\phi$ is continuous in $\lambda$. If we denote $\epsilon = \frac{d(\phi(\lambda_0),(I_{\lambda_0,\gamma}^t)^c)}{3}$, it exists $V'$ a neighborhood of $\lambda_0$ in $V$ such that
\begin{equation*}
    |\phi(\lambda)-\phi(\lambda_0)|\le \epsilon~~~~\forall \lambda \in V'.
\end{equation*}
Let $u^+_\alpha(\lambda),u^-_\alpha(\lambda)$ such that $I^t_{\lambda,\alpha}=]u^-_\alpha(\lambda),u^+_\alpha(\lambda)[$, we can write
\begin{equation*}
    u^-_\alpha(\lambda)=\underset{\pi^t(\beta) < \pi^t(\alpha)}{\sum} \lambda_\beta,~~~\text{and}~~~u^+_\alpha(\lambda)=u^-_\alpha(\lambda)+\lambda_\alpha.
\end{equation*}
The two functions are continuous, and then we can find an open neighborhood $V''$ such that $\forall \lambda \in V''$
\begin{equation*}
u^-_\gamma(\lambda)   \le  u^-_\gamma(\lambda_0)+\epsilon  < u^+_\gamma(\lambda_0) -\epsilon \le   u^+_\gamma(\lambda).
\end{equation*}
And then 
\begin{equation*}
T_{\lambda}^{m+1}(0) ~\in ~]  T_{\lambda_0}^{m+1}(0) - \epsilon,  T_{\lambda_0}^{m+1}(0)+\epsilon [ ~\subset~ ]u^-_\gamma(\lambda_0)+\epsilon  , u^+_\gamma(\lambda_0) -\epsilon [ ~\subset~ ] u^-_\gamma(\lambda) , u^+_\gamma(\lambda)[
\end{equation*}
and then $\iota_\lambda(0)_{m+1}=\iota_{\lambda_0}(0)_{m+1}$ for all $V''$
\end{proof}
\bibliography{biblio}
\bibliographystyle{alpha}
\end{document}